\documentclass[12pt]{article}
\usepackage[margin=1in]{geometry}

\usepackage{amsthm,amsmath,amssymb,mathtools}
\newtheorem{theorem}{Theorem}[section]
\newtheorem{claim}{Claim}[theorem]
\newtheorem{lemma}[theorem]{Lemma}
\newtheorem{problem}[theorem]{Problem}
\newtheorem{proposition}[theorem]{Proposition}

\newtheorem{conjecture}[theorem]{Conjecture}
\theoremstyle{definition}
\newtheorem{definition}[theorem]{Definition}

\newtheorem{example}[theorem]{Example}

\newcommand{\bF}{\mathbb F}
\newcommand{\bR}{\mathbb R}

\newcommand{\bZ}{\mathbb Z}

\newcommand{\cA}{\mathcal{A}}
\newcommand{\cB}{\mathcal{B}}
\newcommand{\cC}{\mathcal{C}}

\newcommand{\cM}{\mathcal{M}}

\newcommand{\ep}{\varepsilon}
\DeclareMathOperator{\Aut}{Aut}

\DeclareMathOperator{\cl}{cl}

\DeclareMathOperator{\PG}{PG}

\DeclareMathOperator{\GF}{GF}

\newcommand{\del}{\!\setminus\!}

\usepackage{authblk, comment, tikz, enumitem, kbordermatrix}
\usetikzlibrary{arrows,matrix,positioning,decorations.markings}

\def\VR{\kern-\arraycolsep\strut\vrule &\kern-\arraycolsep}
\def\vr{\kern-\arraycolsep & \kern-\arraycolsep}

\newcommand*{\mm}{%
  \leavevmode
  \hphantom{0}%
  \llap{%
    \settowidth{\dimen0 }{$0$}%
    \resizebox{1.1\dimen0 }{\height}{$-$}%
  }%
}

\tikzstyle{every path}=[thick]

\newdimen\R
\title{Matroids from gain graphs over quotient groups}
\author{Zach Walsh\thanks{The author was supported in part by NSF grant DMS-2452015.}}
\affil{\small Auburn University, Department of Mathematics and Statistics, Auburn, AL, U.S.A.}
\date{\today}

\begin{document}

\maketitle

\begin{abstract}
We present a new construction for matroids from gain graphs that simultaneously generalizes several existing constructions.
The construction takes as input a gain graph over a Frobenius group $\Gamma$ with Frobenius kernel $\Gamma_1$ and outputs an elementary lift of the frame matroid of the underlying gain graph over the quotient group $\Gamma/\Gamma_1$.

While the hypothesis that $\Gamma$ is a Frobenius group may seem unusual, we prove that it is in some sense necessary: if $\Gamma$ is any finite group with a nontrivial proper normal subgroup $\Gamma_1$ and there is a construction that takes in a complete $\Gamma$-gain graph and outputs an elementary lift $M$ of the frame matroid of the underlying $(\Gamma/\Gamma_1)$-gain graph so that a cycle of the graph is a circuit of $M$ if and only if it is $\Gamma$-balanced, then $\Gamma$ is a Frobenius group with Frobenius kernel $\Gamma_1$.
\end{abstract}

\section{Introduction} \label{sec: introduction}

Given a group $\Gamma$, a \emph{$\Gamma$-gain graph} is a pair $(G, \psi)$ where $G$ is graph and $\psi$ is a function (called a \emph{$\Gamma$-gain function}) from the oriented edges of $G$ to $\Gamma$ so that reversing an edge inverts the associated value in $\Gamma$.
Classical work of Zaslavsky \cite{Zaslavsky1991} shows how to construct two different matroids on the edge set of $G$ from $(G, \psi)$, namely the \emph{frame matroid} $F(G, \psi)$ and the \emph{lift matroid} $L(G, \psi)$.
The lift matroid $L(G, \psi)$ is an \emph{elementary lift} of the graphic matroid $M(G)$ of $G$, meaning that there is a matroid $M$ with an element $e$ so that $M\del e = L(G, \psi)$ and $M/e = M(G)$.
Frame matroids and lift matroids have proven to be important in the study of hyperplane arrangements \cite{Anderson-Su-Zaslavsky2024}, rigidity theory \cite{Bernstein2022, Tanigawa-2015}, structural matroid theory \cite{Kahn-Kung1982}, combinatorial optimization \cite{Guenin-Stuive-2016}, and more \cite{Zaslavsky1998_survey}.

Our main result is a construction that has both of Zaslavsky's constructions, and also recent constructions of Anderson, Su, and Zaslavsky \cite{Anderson-Su-Zaslavsky2024} and Bernstein \cite{Bernstein2022}, as special cases.
We will work with gain graphs over a group $\Gamma$ with a normal subgroup $\Gamma_1$.
In this setting every $\Gamma$-gain graph $(G, \psi)$ has an underlying $(\Gamma/\Gamma_1)$-gain graph $(G, \psi/{\Gamma_1})$ via the natural homomorphism from $\Gamma$ to $\Gamma/\Gamma_1$.
Our construction takes as input a (possibly empty) collection $\cA$ of nontrivial subgroups of $\Gamma$ with the following properties:
\begin{itemize}[leftmargin=1.5em]
    \item $ \{\Gamma_1\} \cup \cA$ is a \emph{partition} of $\Gamma$ (every non-identity element of $\Gamma$ is in exactly one group in $\{\Gamma_1\} \cup \cA$),

    \item each subgroup $A \in \cA$ is \emph{malnormal} (for all $\gamma \in \Gamma - A$, the conjugate $\gamma^{-1}A\gamma$ intersects $A$ only in the identity), and 

    \item if $A \in \cA$, then every conjugate of $A$ is in $\cA$.
\end{itemize}
As we shall see in Section \ref{sec: group theory},  if $\Gamma$ is finite and $\Gamma_1$ is proper and nontrivial then these properties imply that
$\Gamma$ is a \emph{Frobenius group}, which is an important type of permutation group.
However, our main result, which we will now state, also applies to infinite groups such as the special Euclidean group of direct isometries of $\mathbb R^2$.

\begin{theorem} \label{thm: main}
Let $\Gamma$ be a group, let $\Gamma_1$ be a normal subgroup of $\Gamma$, and let $\{\Gamma_1\} \cup \cA$ be a partition of $\Gamma$ so that if $A \in \cA$ then $A$ is malnormal and every conjugate of $A$ is in $\cA$.
Then every $\Gamma$-gain graph $(G, \psi)$ has a canonical associated matroid $M(\Gamma, \Gamma_1, \cA, G, \psi)$ on the edge set of $G$, and this matroid is an elementary lift of the underlying $(\Gamma/\Gamma_1)$-frame matroid $F(G, \psi/\Gamma_1)$.
\end{theorem}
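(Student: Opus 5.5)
We will build the elementary lift by specifying a modular cut, or equivalently a linear subclass, of the frame matroid $N = F(G,\psi/\Gamma_1)$. Recall the standard fact: the elementary lifts of $N$ correspond to the proper linear subclasses $\cB$ of the cycles of $N$. Here a \emph{linear subclass} is a set of circuits such that whenever $C_1, C_2 \in \cB$ form a modular pair (that is, $r_N(C_1 \cup C_2) = |C_1 \cup C_2| - 2$), every circuit of $N$ contained in $C_1 \cup C_2$ also lies in $\cB$. The resulting matroid $M$ has as circuits the members of $\cB$, the circuits of $N$ not in $\cB$ together with their ``unions''. It satisfies $M/e = N$ after adjoining a point $e$ in general position with respect to $\cB$. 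So it suffices to define a canonical class $\cB$ of circuits of $N$ and verify the linear subclass axiom.

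\textbf{The class $\cB$.} The circuits of $N$ are the $(\Gamma/\Gamma_1)$-balanced cycles, together with thetas, tight handcuffs and loose handcuffs that contain no $(\Gamma/\Gamma_1)$-balanced cycle. Put a balanced cycle $C$ of $N$ in $\cB$ exactly when it is $\Gamma$-balanced; note that its $\Gamma$-gain lies in $\Gamma_1$, which is normal, so this condition is well defined up to choice of basepoint and orientation. For the unbalanced circuits (bicircular pieces), each unbalanced cycle $C$ has a $\Gamma$-gain $\gamma_C \notin \Gamma_1$. By the partition hypothesis, $\gamma_C$ lies in a unique $A \in \cA$, and changing basepoint or switching conjugates $A$. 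We declare a theta or handcuff to be in $\cB$ when its unbalanced cycles have gains that can be simultaneously conjugated into a common $A \in \cA$ in a manner compatible with the connecting paths. Concretely, after switching so that the connecting path has identity gain, the two cycle gains must lie in the same $A$. Malnormality of $A$ is exactly what makes this ``common fixed subgroup'' notion independent of the switching choices: if $\gamma^{-1}A\gamma \cap A \ne 1$ then $\gamma \in A$. This parallels Zaslavsky's lift construction (where $\cA = \emptyset$ and $\Gamma_1 = \Gamma$ gives $\cB$ the balanced cycles) and the frame construction (where $\Gamma_1 = 1$ and every subgroup of order... degenerate cases), which serves as a sanity check.

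\textbf{Verification.} We check the linear subclass axiom by case analysis on modular pairs $C_1, C_2 \in \cB$ in $N$. Their union has nullity $2$, so it is one of a bounded number of ``bicircular-type'' subgraphs: a theta-plus-path, two cycles joined in various ways, and so on. In each configuration we switch $\psi$ so that a spanning tree of the union has identity gain. Each circuit inside the union then has gains expressible as words in at most three fundamental gains. The hypothesis $C_1, C_2 \in \cB$ forces these fundamental gains into $\Gamma_1$ or into a common $A \in \cA$. Because $\Gamma_1$ and each $A$ are subgroups, products stay inside them, and because the family $\{\Gamma_1\} \cup \cA$ partitions $\Gamma$, no mixing occurs. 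Hence every other circuit in the union lands in $\cB$. Normality of $\Gamma_1$ and conjugation-closure of $\cA$ ensure that the construction is independent of switching, which gives canonicity. Finally, $\cB$ need not be all circuits, but even if it is, the lift is still an elementary lift (possibly trivial), so no properness issue arises.

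\textbf{Main obstacle.} The main difficulty is the case analysis of modular pairs in which one circuit is a balanced cycle and the other is a handcuff, or both are handcuffs sharing a cycle. Here one must show, using malnormality, that the $A$'s determined by different subcycles coincide. For instance, suppose two $\Gamma_1$-unbalanced cycles meet at a vertex, with gains $a \in A$ and $b \in A'$. Suppose also that a third circuit forces $ab$ into a member of $\cA$. We must deduce $A = A'$. I would attempt this first by the general lemma that if $x, y, xy$ each lie in members of $\cA \cup \{\Gamma_1\}$ with $x \in A$, then the members are compatible. This follows from the partition property together with the observation that $\Gamma_1 A$ is a semidirect-product-like set, in which $A$ is a complement meeting each coset of $\Gamma_1$ at most once, a consequence of malnormality plus the partition.
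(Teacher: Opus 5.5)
\textbf{There is a genuine gap.} Your outline has the same architecture as the paper's proof: a class defined by switching the two cycle gains into a common member of $\cA$, the linear-class axiom checked on modular pairs after normalizing a spanning tree, then the Crapo--Brylawski theorem. But the verification only asserts the crucial step, and the reason you give for it is insufficient. Closure of subgroups under products plus the partition property forces the fundamental gains of $C_1\cup C_2$ into a common $A$ only when a normalized spanning tree $T$ of $G[C_1\cup C_2]$ restricts to spanning trees of the handcuff or theta members among $C_1,C_2$.

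That fails when $C_1,C_2$ are loose handcuffs on the same two cycles $D,D'$ with different connecting paths $P_1,P_2$. In that configuration no spanning tree of the union works, because connecting-path edges are bridges. Set up the case as follows:
\begin{itemize}
\item let $v$ be the end of $P_2$ on $D$;
\item let $f$, with other end $u$, be the edge of $P_2$ at $v$;
\item let $e$, with other end $w$, be an edge of $D$ at $v$;
\item let $e'\in D'$, and normalize $T=(C_1\cup C_2)-\{e,e',f\}$.
\end{itemize}
Then $C_1\in\cB$ gives $\psi(e)\cup\psi(e')\subseteq A$ for some $A\in\cA$. However, $f$ is a bridge of $G[C_2]$ not in $T$, so after re-switching across $f$, the condition $C_2\in\cB$ only says that $\psi(e')$ and $\gamma\,\psi(e,v,w)\,\gamma^{-1}$, where $\gamma=\psi(f,u,v)$, lie in a common member of $\cA$. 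The partition property makes that member $A$. You still need $\gamma\in A$, since otherwise some circuit of $N$ in $T\cup\{e,f\}$ lies outside $\cB$. This follows only from malnormality of $A$, applied to $\psi(e,v,w)\neq\ep$ and its conjugate by $\gamma$. This is the one place the paper uses malnormality, and your argument never supplies it.

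Your ``main obstacle'' paragraph aims at the wrong target. Showing that the members of $\cA$ attached to different subcircuits coincide is the easy part: it needs only a shared non-identity gain plus the partition property. The real content is that the \emph{conjugating} gain lies in $A$. The lemma you propose cannot give this:
\begin{itemize}
\item every non-identity element of $\Gamma$ lies in some member of $\{\Gamma_1\}\cup\cA$, so the hypothesis on $x,y,xy$ is vacuous;
\item the fact that $A$ meets each coset of $\Gamma_1$ at most once follows from $A\cap\Gamma_1=\{\ep\}$ alone and says nothing about conjugates.
\end{itemize}
Relatedly, malnormality is not what makes membership in $\cB$ independent of switching. Closure of $\cA$ under conjugation already does that, as your own verification paragraph says.

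Once $\gamma\in A$ is established, all of $\psi(C_1\cup C_2)$ lies in $A$. Any $(\psi/\Gamma_1)$-balanced cycle in the union then has gain in $A\cap\Gamma_1=\{\ep\}$, which finishes that case. The remaining cases also need to be argued rather than asserted. For instance, when $C_1,C_2$ are both balanced cycles, you must show the leftover gain lies outside $\Gamma_1$. This uses the fact that otherwise $C_1\cup C_2$ would have nullity $3$ in $N$, contradicting modularity.
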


\noindent
Theorem \ref{thm: main} recovers several existing constructions for matroids from gain graphs:

\begin{itemize}[leftmargin=1.5em]
    \item If $\Gamma_1$ is trivial then $M(\Gamma, \Gamma_1, \cA, G, \psi)$ is the frame matroid of $(G, \psi)$.

    \item If $\Gamma_1 = \Gamma$ then $M(\Gamma, \Gamma_1, \cA, G, \psi)$ is the lift matroid of $(G, \psi)$.

    \item If $\Gamma$ is the special Euclidean group $SE(2)$ of direct isometries (transformations that preserve distance and orientation) of $\bR^2$, $\Gamma_1$ is the subgroup of translations, and $A \in \cA$ if $A$ is the stabilizing subgroup of some point $x \in \bR^2$, then $M(\Gamma, \Gamma_1, \cA, G, \psi)$ agrees with a recent construction of Bernstein \cite{Bernstein2022}, which was motivated in part by symmetry-forced rigidity of frameworks.
    We will discuss this example in detail in Section \ref{sec: rigidity}.

    \item If $\Gamma = \Gamma_1 A$ where $\Gamma_1$ is an abelian group with no elements of order $2$ and $A$ has order $2$ then there is a natural choice for $\cA$ (see Example \ref{ex: inverted infinite abelian group}), and $M(\Gamma, \Gamma_1, \cA, G, \psi)$ agrees with a recent construction of Anderson, Su, and Zaslavsky \cite{Anderson-Su-Zaslavsky2024} for matroids from gain signed graphs, which was motivated in part by the study of hyperplane arrangements. We will discuss this example in detail in Section \ref{sec: gain signed graphs}.
\end{itemize}

\noindent
Theorem \ref{thm: main} also has several interesting new special cases:

\begin{itemize}[leftmargin=1.5em]

    \item For any Frobenius group $\Gamma$, Theorem \ref{thm: main} applies with $\Gamma_1$ as the Frobenius kernel and $\cA$ as the set of all Frobenius complements.
    We will define these terms in Section \ref{sec: group theory}, and explore some interesting choices of Frobenius groups.

    \item Let $\bF$ be a field and 
let $\Gamma_1$ and $\Gamma_2$ be nontrivial subgroups of $\bF^+$ and $\bF^{\times}$, respectively, so that $\Gamma_1$ is closed under scaling by elements of $\Gamma_2$.
We will show in Theorem \ref{thm: matrix representation} that if $M$ is the vector matroid of a matrix over $\bF$ of the form
$$
\begin{tabular}{|ccc|}
\hline 
& a row with entries from $\Gamma_1$ & \\
\hline 
& & \\
& a $\Gamma_2$-frame matrix & \\ [0.5cm]
\hline
\end{tabular}
$$
then $M$ arises from Theorem \ref{thm: main}.
The special case $|\Gamma_2| = 1$ recovers a result of Zaslavsky \cite[Theorem 4.1]{Zaslavsky2003} and the special case $|\Gamma_2| = 2$ recovers a result of Anderson, Su, and Zaslavsky \cite[Theorem 4.2]{Anderson-Su-Zaslavsky2024}.
Here a \emph{$\Gamma_2$-frame matrix} is a matrix so that each column has at most two nonzero entries, each nonzero column contains a $1$, and each column with two nonzero entries contains a $1$ and a $-\gamma$ for some $\gamma \in \Gamma_2$.
It is expected that matrices of the form shown above will play an important role in future work in structural matroid theory (see \cite[page 24]{Geelen-Gerards-Whittle2013_structure_overview} and \cite[Theorem 4.2]{Structure_Theory}), and we hope that the construction of these matroids from gain graphs will provide useful tools for their future study.
\end{itemize}

While the inclusion of the partition $\cA$ in the hypothesis of Theorem \ref{thm: main} may seem unusual, we will prove that when $\Gamma$ is finite it is in fact necessary under one natural assumption, which we now describe.
Given a $\Gamma$-gain graph $(G, \psi)$, a cycle of $G$ is \emph{$\psi$-balanced} if there is a simple closed walk around $C$ so that if we multiply the gain values of the oriented edges along the walk we obtain the identity element of $\Gamma$.
If one seeks to construct a matroid $M$ from $(G, \psi)$, it is natural to require that a cycle of $G$ is a circuit of $M$ if and only if it is $\psi$-balanced.
Indeed, this is the case for the matroids of Theorem \ref{thm: main} and therefore also all of the previous constructions that it generalizes.
It is also natural to require that the construction applies to any $\Gamma$-gain graph, in particular to the \emph{complete $\Gamma$-gain graph $(K_n^{\Gamma}, \psi_n^{\Gamma})$}, which we will define in Section \ref{sec: gain graphs}.
The following result shows that if $M$ respects the balanced cycles of $(K_n^{\Gamma}, \psi_n^{\Gamma})$ as described above, then $M$ arises from Theorem \ref{thm: main}.

\begin{theorem} \label{thm: main converse}
Let $\Gamma$ be a finite group, let $\Gamma_1$ be a normal subgroup of $\Gamma$, and let $M$ be an elementary lift of the frame matroid $F(K_n^{\Gamma}, \psi_n^{\Gamma}/\Gamma_1)$ with $n \ge 4$ so that a cycle of $K_n^{\Gamma}$ is a circuit of $M$ if and only if it is $\psi_n^{\Gamma}$-balanced.
Then there is a partition $\{\Gamma_1\} \cup \cA$ of $\Gamma$ so that if $A \in \cA$ then $A$ is malnormal and every conjugate of $A$ is in $\cA$, and $M = M(\Gamma, \Gamma_1, \cA, K_n^{\Gamma}, \psi_n^{\Gamma})$.
\end{theorem}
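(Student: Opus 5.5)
We will prove Theorem~\ref{thm: main converse} by reading off $\cA$ from the matroid $M$. Let $N$ be a matroid with element $e$ such that $N\del e = M$ and $N/e = F(K_n^{\Gamma}, \psi_n^{\Gamma}/\Gamma_1)$. Since $M$ is an elementary lift, every circuit of $F$ is either a circuit of $M$ or a union of two circuits of $M$ of rank one greater. Equivalently, $e$ determines a \emph{linear subclass} $\cB$ of circuits of $F$: the circuits of $F$ that are still circuits of $M$. The hypothesis then says that a cycle $C$ of $K_n^\Gamma$ lies in $\cB$ if and only if it is $\psi_n^\Gamma$-balanced. The plan is to examine which \emph{handcuffs and theta subgraphs} that are circuits of $F$ lie in $\cB$, and to encode this as data on $\Gamma$.

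First consider a single loop, or a pair of parallel edges forming a cycle, with gain $\gamma\in\Gamma$. Such a cycle is $F$-unbalanced exactly when $\gamma\notin\Gamma_1$. For two unbalanced cycles with gains $\alpha,\beta\notin\Gamma_1$ at a common vertex $v$ (a tight handcuff, a circuit of $F$), define $\alpha\sim\beta$ if this handcuff is in $\cB$. We need to check that this relation is well defined, i.e.\ independent of the vertex and of switching. For that we use the linear-subclass (modular) property together with the theta subgraphs available in $K_n^\Gamma$ for $n\ge 4$. The key step is to show that the classes of $\sim$ on $\Gamma-\Gamma_1$, each with the identity added, are subgroups, and that together with $\Gamma_1$ they form a partition.

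To get closure under products, take loops with gains $\alpha$ and $\beta$ at $v$ and a third cycle realizing $\alpha\beta$ via a theta graph formed with an extra vertex. Circuit elimination in $M$ inside the rank-bounded flat spanned by these edges then forces $\alpha\beta\sim\alpha$ whenever $\alpha\sim\beta$ and $\alpha\beta\notin\Gamma_1$. The hypothesis that a cycle is an $M$-circuit iff it is $\Gamma$-balanced is used here: it distinguishes $\alpha\beta=1$ from $\alpha\beta\in\Gamma_1-\{1\}$, and the latter cannot occur inside a class. Inverses come from reversing orientation.

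Next we show conjugation invariance and malnormality. Switching at $v$ by $\gamma$ conjugates all loop gains at $v$, and $M$ is invariant under switching because balance is. Therefore $\gamma^{-1}A\gamma\in\cA$ for every $A\in\cA$. For malnormality, suppose $\gamma\notin A$ and $1\neq a\in A\cap\gamma^{-1}A\gamma$. Consider the loop $a$ at $u$, the loop $\gamma a\gamma^{-1}$ at $w$, and an edge $uw$ of gain $\gamma$. This gives loose handcuffs and theta graphs whose membership in $\cB$, compared through linear-subclass elimination, would force $\gamma\in A$ or would produce a balanced-versus-$\Gamma_1$ contradiction.

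Finally, $M$ and $M(\Gamma,\Gamma_1,\cA,K_n^\Gamma,\psi_n^\Gamma)$ are both elementary lifts of the same frame matroid. Each is determined by its linear subclass, and we compare the two subclasses on every circuit type of $F$: balanced cycles, theta graphs, tight and loose handcuffs, and the contrabalanced types. The main obstacle I expect is this last matching step, together with the well-definedness of $\sim$ on loose handcuffs and contrabalanced thetas. Loose handcuffs must be reduced to tight ones by contracting the connecting path, and the linear-subclass closure must be checked in each configuration. I would first handle tight handcuffs, then use $n\ge4$ to find auxiliary vertices that connect every other circuit type to tight ones via modular pairs.
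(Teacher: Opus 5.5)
\textbf{There is a genuine gap: switching invariance is assumed, not proved.} You write that ``$M$ is invariant under switching because balance is.'' That does not follow, and it is the crux of the theorem, not a formality. The hypothesis only fixes which \emph{cycles} lie in the linear class $\cC$ of $M$. A switching permutation $\bar\eta$ of $E(K_n^{\Gamma})$ is an automorphism of $F(K_n^{\Gamma},\psi_n^{\Gamma}/\Gamma_1)$ that preserves the balanced cycles. So $\bar\eta(\cC)$ is again a linear class with exactly the same cycles, but nothing in your argument forces $\bar\eta(\cC)=\cC$ on handcuffs and thetas. Indeed, for $n=2$ there are no balanced cycles at all, and a linear class consisting of a single contrabalanced theta need not be switching-invariant.

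The paper proves invariance for $n\ge 3$ as a separate lemma:
\begin{itemize}
\item Take $C\in\cC$ that is not a cycle, and replace a subpath $P$ whose internal vertices have degree $2$ by a single edge $P'$ of the same gain. Then $C$ and the balanced cycle $P\cup P'$ form a modular pair in $\cC$, so the new circuit lies in $\cC$.
\item Switching at a degree-$2$ vertex amounts to such replacements, with a separate modular-pair trick when its two edges form a digon.
\item Larger degree is handled by induction: add an edge lying in a balanced cycle through the switched vertex, then use circuit elimination.
\end{itemize}
Several of your steps rest on this lemma and are unsupported as written: conjugation closure of $\cA$; the malnormality argument, which needs the loose handcuff whose connecting edge has gain $\gamma$ to lie in $\cC$, and this is obtained precisely by switching it into $E_A$; and any vertex- or switching-independence of your $\sim$.

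\textbf{Other gaps, and how the paper avoids them.} Defining $\alpha\sim\beta$ through one particular tight handcuff leaves well-definedness and transitivity unproved. You also leave the final comparison with $\cC(\Gamma,\Gamma_1,\cA,K_n^{\Gamma},\psi_n^{\Gamma})$ as an open ``main obstacle''. Contracting a connecting path is not available there, since $M$ is fixed. The paper sidesteps all of this with a global rank definition:
\begin{itemize}
\item First dispose of $\Gamma_1=\Gamma$ and of the trivial lift, so that $r(M)=n+1$.
\item Write $E_\alpha=\{\alpha_{ij}\colon 1\le i<j\le n\}$ and set $\alpha\sim\beta$ iff $r_M(E_{\{\ep,\alpha,\beta\}})=n$.
\item Balanced triangles show that $E_{\ep}\cup\alpha_{12}$ spans $E_\alpha$ in $M$. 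This gives reflexivity and transitivity at once.
\item Triangles such as $\{\alpha^{-1}_{12},\alpha_{23},\ep_{13}\}$ and $\{(\alpha\beta)_{12},\beta^{-1}_{23},\alpha_{13}\}$ give closure under inverses and products.
\end{itemize}
The final step is then uniform over all circuit types. Switch so that a spanning tree of the handcuff or theta lies in $E_{\ep}$; the switched circuit is in $\cC$ iff its two non-tree labels lie in a common class.

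Two smaller corrections. $K_n^{\Gamma}$ has no loops, so your loop configurations must be realized by digons $\{\ep_{ij},\alpha_{ij}\}$; this is why the malnormality configuration needs four vertices. Also, a circuit of $F$ outside the linear class is independent in $M$, not a union of two circuits of $M$.
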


In particular, if $\Gamma_1$ is proper and nontrivial, then $\Gamma$ is a Frobenius group with Frobenius kernel $\Gamma_1$ and set $\cA$ of Frobenius complements (see Proposition \ref{prop: Frobenius properties}).

The rest of this paper is structured as follows.
In Section \ref{sec: preliminaries} we will give some background information and notation for Frobenius groups, gain graphs, frame matroids, and lifted-graphic matroids.
We will also discuss how the structure of the partition $\{\Gamma_1\} \cup \cA$ when $\Gamma_1$ is proper and nontrivial leads to an interesting generalization of Frobenius groups to infinite groups, and state a problem (Problem \ref{prob: infinite group partition properties}) concerning properties of these groups.
We will prove Theorem \ref{thm: main} in Section \ref{sec: the construction}, and in Section \ref{sec: properties} we will prove some properties of the class of matroids of the form $M(\Gamma, \Gamma_1, \cA, G, \psi)$, including closure under minors when $\Gamma$ is finite (Theorem \ref{thm: minor-closed}).
In Section \ref{sec: examples} we will describe more details of the special cases of Theorem \ref{thm: main} listed above, and in Section \ref{sec: converse} we will prove Theorem \ref{thm: main converse}.
We will conclude in Section \ref{sec: future work} with a discussion on directions for future work.
We will assume familiarity with matroids, and we refer the reader to \cite{Oxley2011} for any undefined matroid terminology.

\section{Preliminaries} \label{sec: preliminaries}

In this section we will review some background on Frobenius groups,  and gain graphs and their associated matroids.

\subsection{Group theory} \label{sec: group theory}

We will first review the relevant group theory background.

\subsubsection{Semidirect products}
Many of the groups to which we will apply Theorem \ref{thm: main} will be defined using an operation called semidirect product that generalizes group direct products.
Let $\Gamma$ be a group with identity $1$. 
If $\Gamma$ has subgroups $\Gamma_1$ and $\Gamma_2$ so that $\Gamma_1 \cap \Gamma_2 = \{1\}$, $\Gamma_1$ is normal, and $\Gamma = \Gamma_1\Gamma_2$, then $\Gamma$ is an \emph{inner semidirect product} of $\Gamma_1$ and $\Gamma_2$, and we write $\Gamma = \Gamma_1 \rtimes \Gamma_2$.

\begin{example}
Let $D_{2n}$ be the dihedral group of order $2n$ with $n$ odd, let $R$ be the subgroup of rotations, and let $S$ be a subgroup consisting of the identity and one reflection $s$.
Since $R$ is a normal subgroup, $R$ and $S$ intersect only in the identity, and every reflection in $D_{2n}$ is of the form $rs$ for some $r \in R$, we have $D_{2n} = R \rtimes S$. 
\end{example}

\begin{example} \label{ex: Euclidean group as semidirect product}
Let $E(n)$ be the \emph{Euclidean group} of isometries (distance-preserving transformations) of $\bR^n$.
Let $T(n)$ be the subgroup of translations and let $O(n)$ be the \emph{orthogonal group}, which is the subgroup of isometries that fix the origin.
Clearly these two subgroups share only the identity transformation, and 
it is straightforward to show that $T(n)$ is a normal subgroup of $E(n)$.
If $f \in E(n)$ maps the origin to a vector $b$, then the map $x \mapsto f(x) - b$ is in $O(n)$.
Therefore every map in $E(n)$ can be uniquely written as the composition of a translation and an element of $O(n)$, and so $E(n) = T(n) \rtimes O(n)$.

Similarly, the \emph{special Euclidean group} $SE(n)$, which is the subgroup of $E(n)$ consisting of isometries that also preserve orientation (called \emph{direct isometries}), is a semidirect product $T(n)\rtimes SO(n)$ where $SO(n)$ is the \emph{special orthogonal group} of direct isometries that fix the origin.
\end{example}

With a slightly different perspective, one can construct groups that decompose as an inner semidirect product.
If $\Gamma = \Gamma_1 \rtimes \Gamma_2$, then there is a homomorphism $\phi$ from $\Gamma_2$ to $\Aut(\Gamma_1)$ (the group of automorphisms of $\Gamma_1$) defined by conjugation: for each $b \in \Gamma_2$ the function $\phi_b$ that maps $a \in \Gamma_1$ to $b^{-1}ab$ is an automorphism of $\Gamma_1$.
One can use this observation to combine two groups to form a new group that decomposes as an inner semidirect product.
Let $\Gamma_1$ and $\Gamma_2$ be groups (with $\Gamma_1$ additive and $\Gamma_2$ multiplicative) and let $\phi$ be a homomorphism from $\Gamma_2$ to $\Aut(\Gamma_1)$, writing $\phi_b$ for $\phi(b)$.
Then the group $\Gamma$ on the cartesian product of the sets underlying $\Gamma_1$ and $\Gamma_2$ with group operation $\circ$ defined by $(a, b) \circ (c, d) = (a + \phi_b(c), bd)$ is an \emph{outer semidirect product} of $\Gamma_1$ and $\Gamma_2$, and we write $\Gamma = \Gamma_1 \rtimes_{\phi} \Gamma_2$.
We may omit the subscript $\phi$ when $\phi$ is implicit.
It is straightforward to show that $\{(a, 1) \colon a \in \Gamma_1\}$ is a normal subgroup of $\Gamma$ isomorphic to $\Gamma_1$ and $\{(0, b) \colon b \in \Gamma_2\}$ is a subgroup of $\Gamma$ isomorphic to $\Gamma_2$, and that $\Gamma$ is an inner semidirect product of these subgroups.
The identity element of $\Gamma$ is $(0, 1)$, and the inverse of $(a, b)$ is $(\phi_{b^{-1}}(-a), b^{-1})$.

\begin{example} \label{ex: direct product}
Let $\Gamma_1$ and $\Gamma_2$ be any groups.
For all $b \in \Gamma_2$ let $\phi_b$ be the identity automorphism of $\Gamma_1$.
Then $\Gamma_1 \rtimes_{\phi} \Gamma_2$ is the direct product of $\Gamma_1$ and $\Gamma_2$.
\end{example}

\begin{example} \label{ex: semidirect product with {1,-1}}
Let $\Gamma_1$ be any (additively-written) group and let $\{1, -1\}^{\times}$ be the multiplicatively-written group of order $2$.
Let $\phi_1$ be the identity automorphism of $\Gamma_1$ and let $\phi_{-1}$ be the automorphism that inverts each element of $\Gamma_1$.
Then $\Gamma_1 \rtimes_{\phi} \{1, -1\}^{\times}$ is an outer semidirect product.
Note that for all $a \in \Gamma_1$, the element $(a, -1)$ of $\Gamma$ is its own inverse.
\end{example}

\begin{example} \label{ex: finite field semidirect product}
Let $\bF$ be a field, let $m$ be a positive integer, and let $\bF^m$ be the additive group of $m$-dimensional vectors over $\bF$.
For each $b \in \bF^{\times}$ define $\phi_b \colon \bF^m \to \bF^m$ by scaling, so $\phi_b(x) = bx$.
Then $\bF^m \rtimes_{\phi} \bF^{\times}$ is an outer semidirect product with $(a,b) \circ (c,d) = (a + bc, bd)$.
\end{example}

\subsubsection{Frobenius groups}
In Theorem \ref{thm: main}, if $\Gamma_1$ is trivial or equal to $\Gamma$, then $\Gamma$ can be any group.
However, when $\Gamma_1$ is nontrivial and proper, the existence of the collection $\cA$ of nontrivial malnormal subgroups of $\Gamma$ places significant structure on $\Gamma$.
In particular, when $\Gamma$ is finite, it implies that $\Gamma$ is a special finite permutation group called a Frobenius group.

Following \cite{Frobenius-groups}, a \emph{Frobenius group} is a finite group $\Gamma$ with a proper nontrivial malnormal subgroup $A$, called a \emph{Frobenius complement}.
A remarkable result of Frobenius shows that the set $\Gamma_1 = (\Gamma - (\cup_{g \in \Gamma} \, g^{-1}Ag)) \cup \{1\}$ is always a nontrivial normal subgroup of $\Gamma$, referred to as a \emph{Frobenius kernel} of $\Gamma$.
Therefore every Frobenius group has the structure of the groups of Theorem \ref{thm: main} with $\cA$ as the set of conjugates of a Frobenius complement, and conversely, if $\cA$ contains a proper subgroup, then $\Gamma$ is a Frobenius group (when $\Gamma$ is finite).
Moreover, it is well-known (see \cite[Theorem 4.2]{Frobenius-groups}) that the Frobenius kernel is unique and that any two Frobenius complements are conjugate, so a group is Frobenius in at most one way.

Frobenius groups can also be defined as special permutation groups.
Let $\Gamma$ be a transitive group of permutations of a set $X$ (for all $x,y \in X$ there is a permutation sending $x$ to $y$) so that each non-identity permutation has at most one fixed point, and some non-identity permutation has a fixed point.
It is straightforward to show that for each $x \in X$, the subgroup of permutations that fix $x$ (the \emph{stabilizer} of $x$) is malnormal, and that any two stabilizers are conjugate.
Therefore if $\Gamma$ is finite then it is a Frobenius group.
Here the Frobenius kernel is the subgroup of fixed-point-free permutations (called \emph{derangements}).
It turns out that every Frobenius group arises in this way by taking $X$ to be the set of right cosets of a Frobenius complement (see \cite[Theorem 1.7]{Frobenius-groups}).
The permutation perspective is often natural for describing examples.

\begin{example} \label{ex: dihedral group}
Let $X$ be the set of vertices of a regular $n$-gon with $n$ odd, and let $D_{2n}$ be the dihedral group of order $2n$, viewed as a group of permutations of $X$.
Each nontrivial rotation has no fixed points, and each nontrivial reflection has one fixed point since $n$ is odd.
Therefore $D_{2n}$ is a Frobenius group where the kernel is the group of rotations and each reflection with the identity is a complement.
\end{example}

\begin{example} \label{ex: finite field affine transformations}
For each finite field $\bF$ with at least three elements and each positive integer $m$, the group of affine linear transformations $x \mapsto a + bx$ of $\bF^m$ with $b \ne 0$ is a Frobenius group with $X = \bF^m$.
The kernel is the subgroup of translations and the complements are the stabilizers.
\end{example}

In the following example it is more natural to simply identify a proper nontrivial malnormal subgroup.

\begin{example} \label{ex: inverted finite odd abelian group}
Let $\Gamma_1$ be an (additively-written) nontrivial finite abelian group with odd order, and let $\Gamma = \Gamma_1 \rtimes \{1,-1\}^{\times}$ where $-1$ acts by inversion, as in Example \ref{ex: semidirect product with {1,-1}}.
It is straightforward to show that for each $a \in \Gamma_1 - \{0\}$ the subgroup $\{(0,1), (a, -1)\}$ in $\cA$ is malnormal (we show this in a more general setting in Example \ref{ex: inverted infinite abelian group}), so $\Gamma$ is a Frobenius group.
The kernel is the subgroup isomorphic to $\Gamma_1$ and the complements are the subgroups $\{(0,1), (a, -1)\}$ with $a \in \Gamma_1 - \{0\}$.
When $\Gamma_1$ is cyclic we recover Example \ref{ex: dihedral group} and when $\Gamma_1 = \GF(3)^m$ we recover Example \ref{ex: finite field affine transformations} for $\bF = \GF(3)$.
\end{example}

More exotic examples of Frobenius groups can be found in \cite{Starostin-1971}, and more can be constructed from the following fact: if $\Gamma_1 \rtimes \Gamma_2$ and $\Gamma_1' \rtimes \Gamma_2$ are Frobenius groups, then so is $(\Gamma_1 \times \Gamma_1') \rtimes \Gamma_2$, where $\Gamma_1 \times \Gamma_1'$ is the direct product of $\Gamma_1$ and $\Gamma_1'$.

We can use properties of Frobenius groups to state some useful properties of the groups of Theorem \ref{thm: main} when they are finite.
A group partition is \emph{nontrivial} if it contains at least two nontrivial subgroups.

\begin{proposition} \label{prop: Frobenius properties}
Let $\Gamma$ be a group with a normal subgroup $\Gamma_1$ and a nontrivial partition $\{\Gamma_1\} \cup \cA$ so that if $A \in \cA$ then $A$ is malnormal and every conjugate of $A$ is in $\cA$.
If $\Gamma$ is finite then the following hold:
\begin{enumerate}[label=$(\roman*)$]

\item The pair $(\Gamma_1, \cA)$ is unique with $\Gamma_1$ as the Frobenius kernel and $\cA$ as the set of Frobenius complements.

\item Any two groups in $\cA$ are isomorphic.

\item Any two groups in $\cA$ are conjugate.

\item There is a group $A \in \cA$ so that $\Gamma = \Gamma_1 \rtimes A$.

\item $\Gamma_1$ is nontrivial.
\end{enumerate}
\end{proposition}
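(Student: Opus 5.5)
The plan is to reduce everything to the classical theory of Frobenius groups recalled above (Frobenius' theorem, and \cite[Theorem 4.2]{Frobenius-groups} on uniqueness of the kernel and conjugacy of complements). The first step is to produce a proper nontrivial malnormal subgroup. Since the partition $\{\Gamma_1\} \cup \cA$ is nontrivial, it contains at least two nontrivial subgroups, so $\cA \neq \emptyset$. Moreover no $A \in \cA$ can equal $\Gamma$: in that case every non-identity element would lie in $A$, forcing $\Gamma_1 = \{1\}$ and $\cA = \{\Gamma\}$, which is a trivial partition. So any $A \in \cA$ is a nontrivial, proper, malnormal subgroup of the finite group $\Gamma$. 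Hence $\Gamma$ is a Frobenius group with complement $A$, and its Frobenius kernel is $K = (\Gamma - \bigcup_{g} g^{-1}Ag) \cup \{1\}$.

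The next step is to show that $\cA$ is exactly the set of conjugates of $A$. By hypothesis all conjugates of $A$ lie in $\cA$. Now let $B \in \cA$ be arbitrary. By the argument above, $B$ is also a proper nontrivial malnormal subgroup, so it is a Frobenius complement of $\Gamma$. Since a group is Frobenius in at most one way, the kernel determined by $B$ is also $K$, and $B$ is conjugate to $A$. This gives (iii), and (ii) follows immediately because conjugate subgroups are isomorphic.

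It then follows that the union of the groups in $\cA$ is $\bigcup_g g^{-1}Ag$. Because $\{\Gamma_1\}\cup\cA$ partitions $\Gamma$, the non-identity elements of $\Gamma_1$ are exactly the non-identity elements outside this union. Therefore $\Gamma_1 = K$, which is the Frobenius kernel. Frobenius' theorem guarantees that $K$ is nontrivial, giving (v).

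For (i), apply the same argument to any other pair $(\Gamma_1', \cA')$ satisfying the hypotheses. It shows that $\Gamma_1'$ is the Frobenius kernel and $\cA'$ is the set of Frobenius complements. By the uniqueness of the kernel and the conjugacy of complements, $(\Gamma_1', \cA') = (\Gamma_1, \cA)$.

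For (iv), I would count elements. Malnormality gives $N_\Gamma(A) = A$, so $A$ has $|\Gamma : A|$ distinct conjugates, and any two of them meet only in $1$. Their union therefore has $|\Gamma:A|(|A|-1) + 1$ elements, so $|\Gamma_1| = |\Gamma| - |\Gamma:A|(|A|-1) = |\Gamma:A|$. Since $\Gamma_1 \cap A = \{1\}$ by the partition property and $\Gamma_1$ is normal, the set $\Gamma_1 A$ is a subgroup of order $|\Gamma_1||A| = |\Gamma|$. Hence $\Gamma = \Gamma_1 \rtimes A$.

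The step I expect to be the main obstacle is the identification of $\cA$ with the full conjugacy class of $A$. This is the only place where the deep input (uniqueness of the Frobenius kernel and conjugacy of complements) is needed. A naive counting argument does not exclude extra non-conjugate subgroups in $\cA$ taking elements from the kernel.
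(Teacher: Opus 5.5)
Your proof is correct and takes essentially the paper's route: each member of $\cA$ is a proper nontrivial malnormal subgroup, hence a Frobenius complement. Conjugacy of complements and uniqueness of the kernel \cite[Theorem 4.2]{Frobenius-groups} then give $(i)$--$(iii)$ and identify $\Gamma_1$ as the kernel; the only difference is that you prove $(iv)$ by counting instead of citing \cite[Theorem 1.2]{Frobenius-groups}.

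Your closing remark is too pessimistic, though. Counting does exclude non-conjugate members of $\cA$: two non-conjugate nontrivial $A,B\in\cA$, together with all their conjugates, would cover $2|\Gamma|-|\Gamma:A|-|\Gamma:B|\ge|\Gamma|$ non-identity elements, which is impossible. So in this partition setting $(ii)$--$(v)$ are elementary, and only $(i)$ needs the cited theorem.
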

\begin{proof}
Every proper nontrivial malnormal subgroup of $\Gamma$ is a Frobenius complement, so every group in $\cA$ is a Frobenius complement.
It is well-known that any two Frobenius complements are conjugate (see \cite[Theorem 4.2]{Frobenius-groups}), so $(ii)$ and $(iii)$ hold.
Since $\cA$ is closed under group conjugation, it follows that $\cA$ is the set of Frobenius complements and $\Gamma_1$ is the Frobenius kernel of $\Gamma$, which is also known to be unique (see \cite[Theorem 4.2]{Frobenius-groups}), so $(i)$ holds.
It is also well-known that $(iv)$ holds for any $A \in \cA$ (see \cite[Theorem 1.2]{Frobenius-groups}).
Finally, each $A \in \cA$ is a proper subgroup because the partition is nontrivial, so $(iv)$ implies that $\Gamma_1$ is nontrivial.
\end{proof}

\subsubsection{Group partitions}
The notion of a group partition will help us describe the groups for which Theorem \ref{thm: main} applies.
A \emph{partition} of a group $\Gamma$ is a collection $\Pi$ of subgroups of $\Gamma$ so that each non-identity element of $\Gamma$ is in exactly one subgroup in $\Pi$ \cite{Miller-1906}.
The partition is \emph{nontrivial} if $\Pi$ contains at least two nontrivial groups.
For example, every Frobenius group has a nontrivial partition into its Frobenius kernel and Frobenius complements.
Non-Frobenius groups can also have a nontrivial partition.
For example, the direct sum of at least two copies of a cyclic group of prime order is not Frobenius but has a nontrivial partition into its cyclic subgroups.
Amazingly, work of Baer \cite{Baer-1961}, Kegel \cite{Kegel-1961}, and Suzuki \cite{Suzuki-1961} combines to completely classify the finite groups with a nontrivial partition into six families, one of which is the family of Frobenius groups.
We make the following definition to describe the partition from Theorem \ref{thm: main}.

\begin{definition}  \label{def: Frobenius partition}
Let $\Gamma$ be a (possibly infinite) group.
A partition $\{\Gamma_1\} \cup \cA$ of $\Gamma$ is a \emph{Frobenius partition} if $\Gamma_1$ is a normal subgroup of $\Gamma$, and if $A \in \cA$ then $A$ is malnormal and every conjugate of $A$ is in $\cA$.
A Frobenius partition is \emph{nontrivial} if it contains at least two nontrivial groups, and is \emph{trivial} otherwise.
\end{definition}

Note that every group $\Gamma$ has trivial Frobenius partition $\{\Gamma\}$.
And if $\Gamma$ is a Frobenius group with kernel $\Gamma_1$ and set $\cA$ of complements, then $\{\Gamma_1\} \cup \cA$ is a nontrivial Frobenius partition of $\Gamma$.
Conversely, by Proposition \ref{prop: Frobenius properties}, every finite group with a nontrivial Frobenius partition is a Frobenius group.
We will next discuss properties of infinite groups with a nontrivial Frobenius partition, and see that there are many natural groups of this form.
This gives a seemingly new generalization of Frobenius groups to infinite groups, and leads to some problems (Problem \ref{prob: infinite group partition properties}) in group theory that seem nontrivial and interesting.

\subsubsection{Infinite groups}
When $\Gamma$ is infinite with a proper nontrivial malnormal subgroup $A$, it is not guaranteed that $(\Gamma - (\cup_{g \in \Gamma} \, g^{-1}Ag)) \cup \{1\}$ is a subgroup (see \cite[pg. 51]{Kegel-Wehrfritz-1973} or Example \ref{ex: inverted infinite abelian group}), unless $\Gamma$ has special properties \cite{Collins-1990, Kegel-Wehrfritz-1973}.
However, in many natural examples either this set is a normal subgroup of $\Gamma$, or we can find a nontrivial Frobenius partition of $\Gamma$ in a different way.
The first two examples will use the previously discussed fact that if $\Gamma$ acts transitively on a set so that each nontrivial permutation has at most one fixed point and some nontrivial permutation has a fixed point, then every stabilizer subgroup of $\Gamma$ is malnormal and any two stabilizers are conjugate.

\begin{example} \label{ex: infinite field affine transformations}
Example \ref{ex: finite field affine transformations} generalizes directly to infinite fields.
Let $\bF$ be a field with at least three elements, let $m$ be a positive integer, and let $\Gamma$ be the group of affine transformations $x \mapsto a + bx$ of $\bF^m$ with $b \ne 0$.
It is straightforward to check that each transformation has at most one fixed point, and that a transformation has no fixed points if and only if it is a non-identity translation $x \mapsto a + x$ with $a \ne 0$.
Therefore each point-stabilizing subgroup is malnormal, and any two point stabilizers are conjugate.
It is straightforward to check that the subgroup of translations is a normal subgroup of $\Gamma$, even when $\bF$ is infinite.
Therefore $\Gamma$ has a Frobenius partition $\{\Gamma_1\} \cup \cA$ where $\Gamma_1$ is the subgroup of translations and $\cA$ is the set of point-stabilizing subgroups of $\Gamma$.
We note that $x \mapsto a + bx$ and $x \mapsto c + dx$ with $b,d \ne 1$ are in the same set in $\cA$ if and only if  $a/(1 - b) = c/(1 - d)$.
We also comment that $\Gamma \cong \bF^m \rtimes \bF^{\times}$ where $\bF^{\times}$ acts by scaling.
Here the element $(a, b)$ corresponds to the map $x \mapsto a + bx$.
\end{example}

\begin{example} \label{ex: special Euclidean group as Frobenius group}
Recall from Example \ref{ex: Euclidean group as semidirect product} that the special Euclidean group $SE(n)$ of direct isometries of $\bR^n$ is a semidirect product $T(n) \rtimes SO(n)$ where $T(n)$ is the normal subgroup of translations of $\bR^n$ and $SO(n)$ is the special orthogonal group of rotations of $\bR^n$.
When $n = 2$, each transformation has at most one fixed point, and a transformation has no fixed points if and only if it is a non-identity translation.
(To see this, note that if a matrix $A$ rotates by angle $\theta$ then $A$ has eigenvalues $e^{i\theta}$ and $e^{-i\theta}$, so if $\theta$ is not a multiple of $2\pi$ then $A - I$ is invertible, and so the map $x \mapsto Ax + b$ only fixes $x = (A - I)^{-1}(-b)$.)
Therefore each point-stabilizing subgroup of $SE(2)$ is malnormal, and any two stabilizers are conjugate.
Then $SE(2)$ has Frobenius partition $\{T(2)\} \cup \cA$ where $\cA$ is the set of point-stabilizing subgroups.
\end{example}

As we shall see in the next example, if $\Gamma$ is infinite with Frobenius partition $\{\Gamma_1\} \cup \cA$, it may be the case that the groups in $\cA$ are not pairwise conjugate; by Proposition \ref{prop: Frobenius properties} this is not possible when $\Gamma$ is finite.

\begin{example} \label{ex: inverted infinite abelian group}
We can generalize Example \ref{ex: inverted finite odd abelian group} to infinite groups. Let $\Gamma_1$ be an (additively-written) nontrivial abelian group with no elements of order $2$, and let $\Gamma = \Gamma_1 \rtimes \{1,-1\}^{\times}$ where $-1$ acts by inversion, as in Example \ref{ex: semidirect product with {1,-1}}.
Recall that for all $a \in \Gamma_1$, the element $(a, -1)$ of $\Gamma$ is its own inverse.
Let 
$$\cA = \Big\{\{(0,1), (a, -1)\} \colon a \in \Gamma_1\Big\}.$$
We claim that each group $\{(0,1), (a, -1)\}$ in $\cA$ is malnormal.
To see this, note the following two calculations for $c \in \Gamma_1$:
\begin{align*}
(c, 1)^{-1} \circ \{(0,1), (a, -1)\} \circ (c, 1) &= \{(0, 1), (-c + a - c, -1)\} \\ 
(c, -1)^{-1} \circ \{(0,1), (a, -1)\} \circ (c, -1) &= \{(0, 1), (c - a + c, -1)\}.
\end{align*}
Since $\Gamma_1$ is abelian with no elements of order $2$, we see that $-c + a - c = a$ if and only if $c = 0$, and $c - a + c = a$ if and only if $a = c$.
It follows that the subgroup $\{(0,1), (a, -1)\}$ is malnormal, and it is straightforward to check that all of its conjugates are in $\cA$.
Since $\Gamma_1$ is a normal subgroup we see that $\{\Gamma_1\} \cup \cA$ is a Frobenius partition of $\Gamma$.
From the calculations above, one can check that $\{(0,1), (a, -1)\}$ and $\{(0, 1), (b, -1)\}$ are conjugate if and only if there is some $c \in \Gamma_1$ with $a - b = 2c$ or $a + b = 2c$.
If every element of $\Gamma_1$ has a square root (for example, if $\Gamma_1$ is the additive group of a field), then this is always the case, so any two groups in $\cA$ are conjugate.
For other choices of $\Gamma_1$ this may not be true.
For example, if $\Gamma_1 = \bZ$, then $\cA$ contains two conjugacy classes based on the parity of $a$.
\end{example}

Example \ref{ex: inverted infinite abelian group} shows that property $(iii)$ of Proposition \ref{prop: Frobenius properties} does not always hold when $\Gamma$ is infinite.
Interestingly, property $(v)$ also does not always hold when $\Gamma$ is infinite.
As described in \cite[Example 5.2]{delaHarpe-Weber-2014}, there is an infinite group with a proper malnormal subgroup whose conjugates cover the group.
To the best of the author's knowledge, it is unknown whether properties $(i)$, $(ii)$, and $(iv)$ necessarily hold when $\Gamma$ is infinite.

\begin{problem} \label{prob: infinite group partition properties}
Let $\Gamma$ be an infinite group with a normal subgroup $\Gamma_1$ and a nontrivial partition $\{\Gamma_1\} \cup \cA$ so that if $A \in \cA$ then $A$ is malnormal and every conjugate of $A$ is in $\cA$. 
Determine which of the following properties necessarily hold:
\begin{enumerate}[label=$(\roman*)$]

\item The pair $(\Gamma_1, \cA)$ is unique.

\item Any two groups in $\cA$ are isomorphic.

\item There is a group $A \in \cA$ so that $\Gamma = \Gamma_1 \rtimes A$.
\end{enumerate}
\end{problem}

This seems like a nontrivial problem in the theory of infinite groups, and indicates that groups with a Frobenius partition form an interesting generalization of Frobenius groups to infinite groups.
While infinite generalizations of Frobenius groups are well-studied (see \cite{Collins-1990, delaHarpe-Weber-2014, Sozutov-Sunkov-1976}), it appears that the generalization given by Frobenius partitions is new.
Parts $(i)$ and $(iii)$ are also seemingly unknown if we require that $\cA$ is the set of conjugates of a proper nontrivial malnormal subgroup.
Placing this additional assumption on the Frobenius partition $\{\Gamma_1\} \cup \cA$ would give a different generalization of Frobenius groups that may also be interesting for future study.

\subsection{Gain graphs} \label{sec: gain graphs}
Gain graphs are graphs with edges invertibly labeled by elements of a group.
They have found applications in many areas; we direct the reader to Zaslavsky's comprehensive survey \cite{Zaslavsky1998_survey} for details.
In this section we will give background on gain graphs as needed for this paper, and give new notation for gain graphs over quotient groups.

Let $G$ be a graph.
An \emph{oriented edge} of $G$ is an ordered triple $(e,u,v)$ where $e$ is an edge with ends $u$ and $v$.
We write $\vec E(G)$ for the set of all oriented edges of $G$.
Let $G$ be a graph and let $\Gamma$ be a group (written multiplicatively).
A \emph{$\Gamma$-gain function} for $G$ is a function $\psi \colon \vec E(G) \to \Gamma$ so that $\psi(e, u, v) = \psi(e, v, u)^{-1}$ for every oriented edge $(e, u, v)$ with $u \ne v$, so reversing the orientation inverts the gain value.
The pair $(G, \psi)$ is a \emph{$\Gamma$-gain graph}.
Every walk in $G$ has an associated value in $\Gamma$.
For each walk $W = v_1, e_1, v_2, e_2, \dots, v_{k}, e_{k}, v_{k+1}$, where each $v_i$ is a vertex and each $e_i$ is an edge with ends $v_i$ and $v_{i+1}$, define $\psi(W) = \prod_{i = 1}^k \psi(e_i, v_i, v_{i+1})$.
We say that a cycle $C$ of $G$ is \emph{$\psi$-balanced} if there is a simple closed walk $W$ on $C$ for which $\psi(W) = 1$.
We write $\cB_{\psi}$ for the set of $\psi$-balanced cycles of $(G, \psi)$.
More generally we say that a set of edges of $G$ is \emph{$\psi$-balanced} is all of its cycles are $\psi$-balanced.

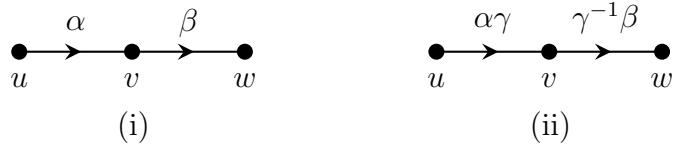
\begin{figure}
\centering
\setlength\tabcolsep{1cm}
\begin{tabular}{c c}
\begin{tikzpicture}[scale=0.75,line cap=round,line join=round,>=triangle 45,x=1cm,y=1cm, decoration={markings, 
	mark= at position 0.55 with {\arrow[scale = 1.5]{stealth}}}]

\draw[postaction={decorate}] (-2,0)-- (0,0);
\draw[postaction={decorate}]  (0,0)-- (2,0);

\draw [fill=black] (-2,0) circle (3.5pt);
\draw [fill=black] (0,0) circle (3.5pt);
\draw [fill=black] (2,0) circle (3.5pt);

\draw[color=black] (-2, -0.5) node {$u$};
\draw[color=black] (0, -0.5) node {$v$};
\draw[color=black] (2, -0.5) node {$w$};

\draw[color=black] (-1, 0.5) node {$\alpha$};
\draw[color=black] (1, 0.5) node {$\beta$};
\end{tikzpicture}

&

\begin{tikzpicture}[scale=0.75,line cap=round,line join=round,>=triangle 45,x=1cm,y=1cm, decoration={markings, 
	mark= at position 0.55 with {\arrow[scale = 1.5]{stealth}}}]

\draw[postaction={decorate}] (-2,0)-- (0,0);
\draw[postaction={decorate}]  (0,0)-- (2,0);

\draw [fill=black] (-2,0) circle (3.5pt);
\draw [fill=black] (0,0) circle (3.5pt);
\draw [fill=black] (2,0) circle (3.5pt);

\draw[color=black] (-2, -0.5) node {$u$};
\draw[color=black] (0, -0.5) node {$v$};
\draw[color=black] (2, -0.5) node {$w$};

\draw[color=black] (-1, 0.5) node {$\alpha  \gamma$};
\draw[color=black] (1, 0.6) node {$\gamma^{-1} \beta$};
\end{tikzpicture}

\\

(i) & (ii)
\end{tabular}
       
    \caption{Image (i) is a $\Gamma$-gain graph, and (ii) is the $\Gamma$-gain graph obtained by switching at $v$ with value $\gamma$.}
    \label{fig: switching}
\end{figure}

There is a natural operation on a $\Gamma$-gain function that preserves the set of balanced cycles.
Let $\psi$ be a $\Gamma$-gain function on a graph $G$, and let $\eta \colon V(G) \to \Gamma$.
Define a new $\Gamma$-gain function $\psi^{\eta}$ on $G$ by $\psi^{\eta}(e, u, v) = \eta(u)^{-1} \psi(e, u, v) \eta(v)$ for each oriented edge $(e, u, v)$ of $G$.
Then $\cB_{\psi^{\eta}} = \cB_{\psi}$, because if $W$ is a walk on $G$ then $\psi(W) = 1$ if and only if $\psi^{\eta}(W) = 1$.
We say that $\Gamma$-gain functions $\psi$ and $\psi'$ are \emph{switching equivalent} if there is a function $\eta \colon V(G) \to \Gamma$ so that $\psi' = \psi^{\eta}$.
We say that $\eta$ is a \emph{switching function}.
Note that if $e$ is a loop at vertex $v$, then $\psi^{\eta}(e, v, v)$ is simply the conjugation $\eta(v)^{-1}\psi(e, v, v)\eta(v)$ of $\psi(e, v, v)$.
Also note that if $W$ is a walk from $u$ to $v$, then $\psi^{\eta}(W) = \eta(u)^{-1}\psi(W)\eta(v)$.
In particular, if $u = v$ and $\eta(u)$ is the identity, then $\psi^{\eta}(W) = \psi(W)$.

A set $X$ of edges of $G$ is \emph{$\psi$-normalized} 
if each orientation of an edge in $X$ has the identity label under $\psi$.
We will make repeated use of the following modification of a standard result (see \cite[Proposition 2.1]{Funk-Pivotto-Slilaty-2022}, for example), which shows that any forest of $G$ can be normalized by switching.

\begin{lemma} \label{lem: normalize a forest}
Let $\Gamma$ be a group with identity $\ep$, let $(G, \psi)$ be a $\Gamma$-gain graph, let $F$ be a forest of $G$, and let $v$ be a vertex of $G$.
Then there is a switching function $\eta$ so that the forest $F$ is $\psi^{\eta}$-normalized and $\eta(v) = \ep$.
\end{lemma}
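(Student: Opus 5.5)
We will prove Lemma \ref{lem: normalize a forest} by reducing to the case of a single tree and then building the switching function one vertex at a time along a tree search. First extend $F$, if convenient, to a spanning forest of $G$. This is harmless, since normalizing a larger forest normalizes $F$; alternatively we can work with $F$ directly and set $\eta(w) = \ep$ on every vertex not covered by $F$. The components of $F$ are trees $T_1, \dots, T_k$, pairwise vertex-disjoint. A switching function is just an arbitrary assignment of group elements to vertices, so we may define $\eta$ separately on each $V(T_i)$. For each tree we fix a root $r_i$, choosing $r_i = v$ for the tree containing $v$ if there is one. If $v$ lies on no tree of $F$, we simply set $\eta(v) = \ep$.

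On a tree $T$ with root $r$, set $\eta(r) = \ep$. Then define $\eta$ on the remaining vertices by induction on distance from $r$ in $T$: if $w$ is a child of $u$ joined by the tree edge $e$, and $\eta(u)$ is already defined, set
$$\eta(w) = \psi(e,u,w)^{-1}\eta(u).$$
Then $\psi^{\eta}(e,u,w) = \eta(u)^{-1}\psi(e,u,w)\eta(w) = \eta(u)^{-1}\psi(e,u,w)\psi(e,u,w)^{-1}\eta(u) = \ep$. Since $\psi^{\eta}$ is again a gain function, the reverse orientation also has gain $\ep^{-1} = \ep$. Every edge of $T$ is a parent–child edge exactly once, and $F$ contains no loops because it is a forest. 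Hence every oriented edge of $F$ receives the identity label, and $\eta(v) = \ep$ by the choice of root.

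There is no real obstacle here; the only points needing care are bookkeeping. First, the well-definedness of $\eta$ relies on each non-root vertex of a tree having a unique parent, which is exactly where acyclicity of $F$ is used. Second, the group may be nonabelian, so the order of multiplication in the defining formula matters. This order was chosen to match the convention $\psi^{\eta}(e,u,v) = \eta(u)^{-1}\psi(e,u,v)\eta(v)$.
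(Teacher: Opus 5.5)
Your proof is correct, but it takes a different route from the paper's. The paper argues by induction on $|V(G)|$, as follows:
\begin{itemize}
\item It deletes the designated vertex $v$ and normalizes $F - v$ by the induction hypothesis, obtaining $\eta'$.
\item For each component $F_i$ of $F - v$ containing a neighbor $u_i$ of $v$, it multiplies the switching values on all of $F_i$ on the right by the constant $\alpha_i = \psi^{\eta'}(e_i, u_i, v)$.
\item This works because switching an already-normalized component by a constant $\alpha$ leaves its edges with gain $\alpha^{-1}\ep\alpha = \ep$, while it turns the gain on the connecting edge $e_i$ into $\alpha_i^{-1}\alpha_i = \ep$.
\end{itemize}

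You instead give the direct construction. You root each tree (at $v$ where possible) and propagate $\eta(w) = \psi(e,u,w)^{-1}\eta(u)$ outward from the root. This unwinds to the closed formula $\eta(w) = \psi(P_w)^{-1}$, where $P_w$ is the tree walk from the root to $w$.

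Your version is explicit, and it makes the one use of acyclicity (unique parents) transparent. In the paper's version acyclicity is used only implicitly: it guarantees that each component of $F - v$ meets $v$ in exactly one edge, which is what makes the component-wise constant switching well defined.

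The inductive argument avoids choosing a vertex ordering. Apart from that, the two establish the same thing, and yours is, if anything, the cleaner and more standard presentation.
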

\begin{proof}
We proceed by induction on $|V(G)|$.
The statement is clearly true if $|V(G)| = 1$, so we may assume that $|V(G)| \ge 2$.
If there is some $w \in V(G) - V(F)$ (possibly $w = v$) then the statement holds by applying induction to $G - w$, so we may assume that $V(F) = V(G)$.
Let $v \in V(F)$ and let $F' = F - v$.
By induction, there is a switching function $\eta'$ so that $F'$ is $\psi^{\eta'}$-normalized and $\eta'(v) = \ep$.
If $v$ has no incident edges in $F$ then $F$ is also $\psi^{\eta'}$-normalized, so we may assume that $v$ has nonzero degree in $F$.
Let $u_1, u_2, \dots, u_k$ be the neighbors of $v$ in $F$.
For each $i \in [k]$ let $e_i$ be the edge of $F$ with ends $v$ and $u_i$,  let $\alpha_i = \psi^{\eta'}(e_i, u_i, v_i)$, and let $F_i$ be the component of $F'$ that contains vertex $u_i$. 
Let $\eta''$ be the switching function with $\eta''(w) = \alpha_i$ if $w \in V(F_i)$ and $\eta''(w) = \ep$ otherwise.
Let $\eta$ be the switching function that first applies $\eta'$ and then applies $\eta''$.
Then $F$ is $\psi^{\eta}$-normalized and $\psi^{\eta}(v) = \ep$.
\end{proof}

We will use some nonstandard notation to keep track of the set of gain values of the two orientations of a given edge $e$.

\begin{definition} \label{def: set of gain values}
Let $\Gamma$ be a group, let $(G, \psi)$ be a $\Gamma$-gain graph, and let $e$ be an edge of $G$ with ends $u$ and $v$.
We write $\psi(e)$ for the set $\{\psi(e, u, v), \psi(e, v, u)\}$.
More generally, for a set $X \subseteq E(G)$ we write $\psi(X)$ for $\cup_{e \in X} \psi(e)$.
\end{definition}

Note that for each $e \in E(G)$, either $|\psi(e)| = 1$ and the element in $\psi(e)$ is its own inverse in $\Gamma$, or $|\psi(e)| = 2$ and the two elements in $\psi(e)$ are inverses in $\Gamma$.
And a set $X$ is $\psi$-normalized if and only if $\psi(X)$ contains only the identity of $\Gamma$.

When $\Gamma$ is finite there is a natural notion of a complete $n$-vertex $\Gamma$-gain graph.
For each finite group $\Gamma$ and integer $n \ge 2$ let $K_n^{\Gamma}$ be the graph with vertex set $\{v_i \colon i \in [n]\}$ and edge set $\binom{[n]}{2} \times \Gamma$ where the edge $(\{i,j\}, \alpha)$ has ends $v_i$ and $v_j$, and let $\psi_n^{\Gamma}$ be the $\Gamma$-gain function for which $\psi((\{i,j\}, \alpha), v_i, v_j) = \alpha$  when $i < j$.
We will often write $\alpha_{ij}$ for the edge $(\{i,j\}, \alpha)$.

We close with some notation for a gain function induced by a quotient group.

\begin{definition} \label{def: quotient group induced gain function}
Let $\Gamma$ be a group with a normal subgroup $\Gamma_1$ and let $(G,\psi)$ be a $\Gamma$-gain graph.
We write $\psi/{\Gamma_1}$ for the $(\Gamma/\Gamma_1)$-gain function whose value at an oriented edge $(e,u,v)$ is the image of $\psi(e,u,v)$ under the natural homomorphism from $\Gamma$ to $\Gamma/\Gamma_1$. 
\end{definition}

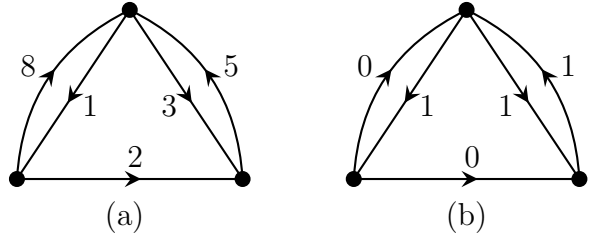
\begin{figure}
    \centering
\begin{tabular}{c ccc}
\setlength\tabcolsep{5cm}

\begin{tikzpicture}[scale=0.75,line cap=round,line join=round,>=triangle 45,x=1cm,y=1cm, decoration={markings, 
	mark= at position 0.55 with {\arrow[scale = 1.5]{stealth}}}]

\draw[postaction={decorate}] (0,1)-- (-2,-2);
\draw[postaction={decorate}]  (0,1)-- (2,-2);
\draw[postaction={decorate}]  (-2,-2)-- (2,-2);
\draw[postaction={decorate}]  (-2,-2) to[bend left=30] (0,1);
\draw[postaction={decorate}]  (2,-2) to[bend right=30] (0,1);

\begin{scriptsize}
\draw [fill=black] (0,1) circle (3.5pt);
\draw [fill=black] (-2,-2) circle (3.5pt);
\draw [fill=black] (2,-2) circle (3.5pt);
\draw[color=black] (-1.8, 0) node {\normalsize $8$};
\draw[color=black] (1.8, 0) node {\normalsize $5$};
\draw[color=black] (-0.7, -0.7) node {\normalsize $1$};
\draw[color=black] (0.7, -0.7) node {\normalsize $3$};
\draw[color=black] (0.1,-1.6) node {\normalsize $2$};
\end{scriptsize}

\end{tikzpicture}

& & &

\begin{tikzpicture}[scale=0.75,line cap=round,line join=round,>=triangle 45,x=1cm,y=1cm, decoration={markings, 
	mark= at position 0.55 with {\arrow[scale = 1.5]{stealth}}}]

\draw[postaction={decorate}] (0,1)-- (-2,-2);
\draw[postaction={decorate}]  (0,1)-- (2,-2);
\draw[postaction={decorate}]  (-2,-2)-- (2,-2);
\draw[postaction={decorate}]  (-2,-2) to[bend left=30] (0,1);
\draw[postaction={decorate}]  (2,-2) to[bend right=30] (0,1);

\begin{scriptsize}
\draw [fill=black] (0,1) circle (3.5pt);
\draw [fill=black] (-2,-2) circle (3.5pt);
\draw [fill=black] (2,-2) circle (3.5pt);
\draw[color=black] (-1.8, 0) node {\normalsize $0$};
\draw[color=black] (1.8, 0) node {\normalsize $1$};
\draw[color=black] (-0.7, -0.7) node {\normalsize $1$};
\draw[color=black] (0.7, -0.7) node {\normalsize $1$};
\draw[color=black] (0.1,-1.6) node {\normalsize $0$};
\end{scriptsize}

\end{tikzpicture}

\\
(a) &&& (b)
\end{tabular}
    \caption{Image (a) shows a $\bZ$-gain graph $(G, \psi)$ with one balanced cycle, and (b) shows the $(\bZ/2\bZ)$-gain graph $(G, \psi/2\bZ)$, which has three balanced cycles.}
    \label{fig: quotient gain function}
\end{figure}

See Figure \ref{fig: quotient gain function} for an example.
Note that if $X = \{e \in E(G) \colon \psi(e) \subseteq \Gamma_1\}$, then $X$ is $(\psi/\Gamma_1)$-normalized because each element of $\Gamma_1$ maps to the identity element of $\Gamma/\Gamma_1$ under the natural homomorphism from $\Gamma$ to $\Gamma/\Gamma_1$.
Also, in the special case that $\Gamma = \Gamma_1 \rtimes \Gamma_2$, each element of $\Gamma$ is of the form $(a, b)$ with $a \in \Gamma_1$ and $b \in \Gamma_2$, and if $\psi(e, u, v) = (a, b)$ then $(\psi/\Gamma_1)(e, u, v) = b$.

\subsection{Biased graphs and their matroids}

Every gain graph has an associated \emph{biased graph}, which is a pair $(G, \cB)$ where $G$ is a graph and $\cB$ is a set of cycles of $G$ so that no theta subgraph (see Figure \ref{fig: handcuffs}) contains exactly two cycles in $\cB$.
This property of $\cB$ is known as the \emph{theta property}.
The cycles in $\cB$ are \emph{balanced}, and the cycles not in $\cB$ are \emph{unbalanced}.
In his original paper introducing biased graphs, Zaslavsky \cite{Zaslavsky1989} showed that if $(G, \psi)$ is a $\Gamma$-gain graph and $\cB_{\psi}$ is the set of $\psi$-balanced cycles, then $(G, \cB_{\psi})$ is a biased graph.
In his seminal follow-up paper \cite{Zaslavsky1991} he showed how to define two matroids from a given biased graph, the frame matroid and the lift matroid, whose circuits are particular types of biased subgraphs of $(G, \cB)$.
A \emph{tight handcuff} is a graph consisting of two edge-disjoint cycles with exactly one common vertex, and a \emph{loose handcuff} is a graph consisting of two vertex-disjoint cycles with a minimal path connecting them.
Equivalently, they are subdivisions of the graphs shown in Figure \ref{fig: handcuffs}.

\begin{figure}
    \centering
\begin{tabular}{ccc ccc c}
 \begin{tikzpicture}[scale=0.75]

\begin{scope}
    \draw (0,0) circle (1);
    \fill (-1,0) circle (3pt);
    \fill (1,0) circle (3pt);
    \draw (-1,0) -- (1,0);
\end{scope}

\end{tikzpicture}

&&&
 \begin{tikzpicture}[scale=0.75]

\begin{scope}
    \draw (0,0) circle (1);
    \draw (2,0) circle (1);
    \fill (1,0) circle (3pt);
\end{scope}

\end{tikzpicture}
     &&&
 \begin{tikzpicture}[scale=0.75]

\begin{scope}
    \draw (0,0) circle (1);
    \draw (3,0) circle (1);
    \fill (1,0) circle (3pt);
    \fill (2,0) circle (3pt);
    \draw (1,0) -- (2,0);
\end{scope}

\end{tikzpicture}

\\
(a) &&& (b) &&& (c)

\end{tabular}
       
    \caption{Theta graphs, tight handcuffs, and loose handcuffs are subdivisions of the graphs (a), (b), and (c), respectively.}
    \label{fig: handcuffs}
\end{figure}
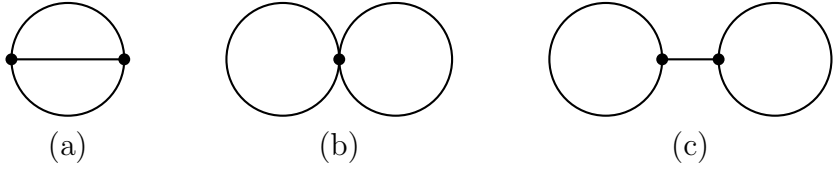

The \emph{frame matroid} of a biased graph $(G, \cB)$, denoted $F(G, \cB)$, is the matroid on $E(G)$ whose circuits are the balanced cycles, the tight handcuffs with both cycles unbalanced, the theta graphs with all cycles unbalanced, and the loose handcuffs with both cycles unbalanced.
In particular, if every cycle is balanced then the circuits of $F(G, \cB)$ are precisely the cycles of $G$, so $F(G, \cB)$ is equal to the graphic matroid of $G$.
It will be useful for us to recall the rank function of $F(G, \cB)$ (\cite[Theorem 2.1]{Zaslavsky1991}).
For $X \subseteq E(G)$ we write $G[X]$ for the graph with edge set $X$ and vertex set consisting of the vertices incident with an edge in $X$, and we say that a subgraph of $G$ is balanced if all of its cycles are balanced.

\begin{proposition}[{\cite{Zaslavsky1991}}]\label{prop: rank of X in frame matroid}
Let $(G, \cB)$ be a biased graph and let $X$ be a subset of $E(G)$. Then the rank of $X$ in $F(G, \cB)$ is given by $r(X)=|V(G[X])| - b(X)$, where $b(X)$ is the number of balanced components of $G[X]$.
\end{proposition}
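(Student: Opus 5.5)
The approach is to verify the formula directly from the circuit description of $F(G,\cB)$ given above. Rather than rederive the matroid axioms, I would take the circuit family as defining a matroid, which is Zaslavsky's theorem and is stated above without proof. Then I would show that $r(X)=|V(G[X])|-b(X)$ by exhibiting, inside each $X$, a maximal independent subset of that size.

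First I would reduce to the case that $G[X]$ is connected. A circuit of $F(G,\cB)$ is connected: balanced cycles, thetas and handcuffs all are. Hence $F(G,\cB)|X$ is the direct sum of its restrictions to the edge sets of the components of $G[X]$. Both sides of the formula are additive over components, so it suffices to treat a single component $H=G[X]$ with $n$ vertices.

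Next I would characterize the independent sets from the circuit list. A set $I$ is independent exactly when each component of $G[I]$ is either a tree, or a connected graph with exactly one cycle that is unbalanced. Indeed, a connected subgraph with at least two cycles contains a theta, a tight handcuff or a loose handcuff. If such a subgraph contains a balanced cycle, that cycle is a circuit. Otherwise we have a theta or handcuff with all cycles unbalanced, and the theta property matters here: if a theta contains a balanced cycle, then by the theta property it does not have exactly two balanced cycles, so either exactly one or all three are balanced, and in either case a balanced cycle, which is a circuit, is present. Conversely, a tree or an unbalanced unicycle contains none of the listed circuits. Such a component on $k$ vertices has $k-1$ edges if it is a tree and $k$ edges if it is an unbalanced unicycle.

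Finally I would compute the rank of the connected $H$. If $H$ is balanced, every independent subset is a forest, so the rank is at most $n-1$, and a spanning tree attains this; this gives $n-1=|V|-b$. If $H$ is unbalanced, every independent set has at most $n$ edges, since each component contributes at most its number of vertices. To attain $n$, I would pick an unbalanced cycle $C$ in $H$ and extend $C$ by adding tree edges that reach every remaining vertex. The result is a spanning connected unicyclic subgraph whose only cycle is $C$, which is unbalanced. So the rank is $n=|V|-0$, proving the formula.

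The main obstacle I expect is the step in the independence characterization showing that any connected subgraph with two cycles contains one of the listed circuits. This requires picking two cycles and a minimal connecting structure, and checking that the resulting theta or handcuff either is all-unbalanced or contains a balanced cycle. The theta property is used exactly at that point.
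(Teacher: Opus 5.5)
Your argument is correct. Note that the paper does not prove this proposition; it quotes it from Zaslavsky \cite{Zaslavsky1991}. It also takes as part of the definition of $F(G,\cB)$ that the listed sets are the circuits of a matroid, which is exactly what you assume. Given that, your derivation is the standard one and is complete:
circuits are connected, so $F(G,\cB)|X$ splits over the components of $G[X]$;
the independent sets are those whose components are trees or connected unicyclic graphs with an unbalanced cycle;
and counting edges per component gives $|V(G[X])|-b(X)$, with a spanning tree, or a spanning unicyclic extension of an unbalanced cycle, attaining the bound.

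One correction to your commentary: the theta property is not used in the step you flag. Once a connected subgraph with two cycles is known to contain a theta or handcuff $H$, the case split is exhaustive without any hypothesis on $\cB$. Either some cycle of $H$ is balanced, so that cycle is a circuit inside $I$, or every cycle of $H$ is unbalanced, so $H$ itself is a listed circuit. The theta property is needed only to show that the listed family satisfies circuit elimination; for instance, two balanced cycles of a theta must force the third to be balanced. That is precisely the part you are taking as given.

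Likewise, the step you call the main obstacle is routine. Take two distinct cycles $C,C'$ in the component.
\begin{itemize}
\item If they share at least two vertices, a maximal subpath of $C'$ through an edge of $C'-C$ whose internal vertices avoid $C$ has two distinct ends on $C$, and together with $C$ it gives a theta.
\item If they share exactly one vertex, they form a tight handcuff.
\item If they are vertex-disjoint, a shortest path between them gives a loose handcuff.
\end{itemize}
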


For a gain graph $(G, \psi)$ we will write $F(G, \psi)$ for $F(G, \cB_{\psi})$.
For every group $\Gamma$, a \emph{$\Gamma$-frame matroid} is the frame matroid of a biased graph arising from a $\Gamma$-gain graph.
The class of frame matroids is minor-closed, as is the class of $\Gamma$-frame matroids for any group $\Gamma$ \cite{Zaslavsky1991}.
Important types of frame matroids include signed-graphic matroids, bicircular matroids, and Dowling geometries.

The lift matroid of a biased graph arises as a special case of a more general construction, which we will now describe.
A collection $\cC$ of circuits of a matroid $M$ is a \emph{linear class} if, whenever $C_1$ and $C_2$ are circuits in $\cC$ so that $|C_1 \cup C_2| - r_M(C_1 \cup C_2) = 2$ (the pair $(C_1, C_2)$ is a \emph{modular pair}), then every circuit $C$ of $M$ contained in $C_1 \cup C_2$ is also in $\cC$.
For example, it is straightforward to check that $(G, \cB)$ is a biased graph if and only if $\cB$ is a linear class of circuits of the graphic matroid $M(G)$.
A classical result, originally due to Crapo \cite{Crapo1965} but stated in the form below by Brylawski \cite{Brylawski1986}, shows how to construct a new matroid on $E(M)$ from a linear class of circuits of $M$.
This new matroid $M'$ is always an \emph{elementary lift} of $M$, which means that there is a matroid $K$ with an element $e$ so that $K \del e = M'$ and $K /e = M$.

\begin{theorem}[{\cite{Brylawski1986, Crapo1965}}]\label{thm: brylawski}
Let $M$ be a matroid on ground set $E$ and let $\cC$ be a linear class of circuits of $M$. 
Then the function $r_{M'} \colon 2^E \to \mathbb Z$ defined, for all $X \subseteq E$, by 
$$r_{M'}(X)=\begin{cases}
r_M(X) & \text{ if each circuit of $M|X$ is in $\cC$},  \\
r_M(X)+1 & \text{ otherwise} 
\end{cases}$$
is the rank function of an elementary lift $M'$ of $M$.
Moreover, every elementary lift of $M$ can be obtained in this way.
\end{theorem}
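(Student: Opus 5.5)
The statement immediately above is Theorem~\ref{thm: brylawski}. I would prove it in two parts: first that $r_{M'}$ satisfies the rank axioms and gives an elementary lift of $M$, and then that every elementary lift arises this way. Call a set $X\subseteq E$ \emph{good} if every circuit of $M|X$ lies in $\cC$, and \emph{bad} otherwise. Subsets of good sets are good, so badness is inherited upward.

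\textbf{Rank axioms.} The bounds $0\le r_{M'}(X)\le |X|$ hold because a bad set contains a circuit of $M$, so $r_M(X)<|X|$ for bad $X$. Monotonicity follows from monotonicity of $r_M$ together with the fact that badness passes to supersets.

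For submodularity, fix $X,Y$ and compare $r_{M'}(X)+r_{M'}(Y)$ with $r_{M'}(X\cup Y)+r_{M'}(X\cap Y)$. The corrections are $0/1$ and badness is upward-closed. So submodularity of $r_M$ handles every case except one: $X$ and $Y$ are good, $X\cup Y$ is bad, and $r_M$ is modular on the pair, that is,
$$r_M(X)+r_M(Y)=r_M(X\cup Y)+r_M(X\cap Y).$$
The core claim is that modularity plus goodness of $X$ and $Y$ forces $X\cup Y$ to be good.

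\textbf{The core claim, and where I expect the difficulty.} Suppose $C\subseteq X\cup Y$ is a circuit of $M$ with $C\notin\cC$. I would choose such $(X,Y,C)$ minimizing $|C-(X\cap Y)|$, or alternatively $|X\cup Y|$. Since $X$ and $Y$ are good, $C$ meets both $X-Y$ and $Y-X$.

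Modularity gives $\cl_M(X)\cap\cl_M(Y)=\cl_M(X\cap Y)$ in the relevant sense. The set $C\cap X$ is independent, and its closure meets $\cl_M(C-X)$ in rank exactly one. So there is a "connecting" element spanned by $X\cap Y$ together with $C\cap X$ and with $C-X$.

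Using this, I would build a circuit $C_1\subseteq X$ and a circuit $C_2\subseteq Y$. Both are in $\cC$ because $X$ and $Y$ are good. They are arranged so that $C_1\cup C_2\supseteq C$ and $|C_1\cup C_2|-r_M(C_1\cup C_2)=2$. The intended construction takes $C_1$ to be a circuit in $(C\cap X)\cup I$ and $C_2$ a circuit in $(C-X)\cup I$, where $I$ is a basis of $X\cap Y$, and then shrinks $I$ to the elements actually needed. If the union has nullity larger than $2$, I pass to a smaller configuration and invoke minimality. The linear class axiom then gives $C\in\cC$, a contradiction.

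Getting the nullity of $C_1\cup C_2$ to be exactly $2$ is the main obstacle. The fallback is induction through intermediate circuits: each step uses one modular pair and strictly reduces $|C-(X\cap Y)|$.

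\textbf{Elementary lift.} With $r_{M'}$ established as a matroid rank function, I would take $K$ to be the extension of $M'$ by a new element $e$ defined by the modular cut of flats $F$ of $M'$ with $r_{M'}(F)=r_M(F)$, the "good" flats. Then $K\setminus e=M'$ and $r_{K/e}(X)=r_K(X\cup e)-1=r_M(X)$, so $K/e=M$. Checking that these flats form a modular cut uses the same core claim.

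\textbf{Converse.} Given $K$ with $K\setminus e=M'$ and $K/e=M$, let $\cC$ be the set of circuits of $M$ that are also circuits of $M'$, i.e.\ those $C$ with $e\notin\cl_K(C)$. Since $r_K(X\cup e)-1=r_M(X)$, we get $r_{M'}(X)\in\{r_M(X),r_M(X)+1\}$, with equality exactly when $e\in\cl_K(X)$ fails to raise rank. From this one checks that $r_{M'}(X)=r_M(X)$ if and only if $X$ is good.

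For the linear class property, suppose $C_1,C_2\in\cC$ form a modular pair in $M$. Then in $K$ neither circuit spans $e$. A rank count in $K$ shows $e\notin\cl_K(C_1\cup C_2)$: otherwise the nullity of $C_1\cup C_2$ in $M'$ would be $1$, which is impossible since it contains two distinct circuits. Hence every circuit of $M$ inside $C_1\cup C_2$ avoids spanning $e$ and lies in $\cC$.
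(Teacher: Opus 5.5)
Your reduction of the rank axioms to the core claim is correct: if $X,Y$ are good and $r_M(X)+r_M(Y)=r_M(X\cup Y)+r_M(X\cap Y)$, then $X\cup Y$ is good. Your converse is also correct. But the core claim is essentially the whole forward direction, and your sketch does not prove it.

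\textbf{The core claim is not proved.} The ``connecting element'' need not exist: in a general matroid, local connectivity $1$ between $C\cap X$ and $C-X$ does not put any element into $\cl_M(C\cap X)\cap\cl_M(C-X)$. Worse, the one-step plan can fail before nullity is even an issue, because there may be no circuits $C_1\subseteq X$, $C_2\subseteq Y$ with $C\subseteq C_1\cup C_2$ at all.

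Here is an example. Let $M$ be the vector matroid over $\bR$ with $a_1,z_1\mapsto(1,0,0)$, $a_2,z_2\mapsto(0,1,0)$, $b_1\mapsto(0,0,1)$ and $b_2\mapsto(1,1,-1)$. Take $X=\{a_1,a_2,z_1,z_2\}$ and $Y=\{z_1,z_2,b_1,b_2\}$. Then $r(X)+r(Y)=2+3=r(X\cup Y)+r(X\cap Y)$, and $C=\{a_1,a_2,b_1,b_2\}$ is a circuit. However, the only circuits of $M|X$ are the parallel pairs $\{a_1,z_1\}$ and $\{a_2,z_2\}$, and neither contains $C\cap X$.

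In this example you only reach $C\in\cC$ through the intermediate circuit $D=\{a_1,z_2,b_1,b_2\}$, which lies in neither $X$ nor $Y$. The modular pair $(\{a_1,z_1\},Y)$ gives $D\in\cC$, and then the modular pair $(D,\{a_2,z_2\})$ gives $C\in\cC$. So you need a genuine induction, with a measure and an invariant guaranteeing that each intermediate circuit lies in a smaller modular pair of good sets. ``Shrink $I$'' and ``pass to a smaller configuration'' do not supply this.

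The standard way around this is duality, and that is what the paper's citation of Crapo and Brylawski rests on (the paper itself gives no proof). The complements $E-C$ with $C\in\cC$ form a linear subclass of hyperplanes of $M^*$. This holds because $|C_1\cup C_2|-r_M(C_1\cup C_2)=2$ exactly when the complementary hyperplanes meet in a flat of corank $2$, and the hyperplanes containing that flat are the complements of the circuits inside $C_1\cup C_2$. Crapo's theorem then gives an extension $K^*$ of $M^*$ by $e$ in which exactly those hyperplanes span $e$. Setting aside the trivial case where $\cC$ is all circuits, a direct rank computation shows that $(K^*/e)^*$ has rank function $r_{M'}$. No submodularity check is needed.

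\textbf{Your modular cut is the wrong one.} Since $K/e=M$ requires $r_K(X\cup e)=r_M(X)+1$, the flats of $M'$ that must span $e$ are the bad ones, those with $r_{M'}(F)=r_M(F)+1$. This matches your own converse, where $C\in\cC$ if and only if $e\notin\cl_K(C)$. The good flats are closed downward and contain $\cl_{M'}(\emptyset)$. Any modular cut containing $\cl_{M'}(\emptyset)$ consists of all flats, which would make $e$ a loop of $K$.

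With the bad flats, the modular-cut condition is immediate from writing $r_{M'}=r_M+l$, where $l$ is the badness indicator, and using submodularity of $r_M$; it does not need the core claim. Moreover $\cl_{M'}(\emptyset)$ is good, so $e$ is not a loop of $K$, and $K/e=M$ follows.
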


The \emph{lift matroid} of $(G, \cB)$, denoted $L(G, \cB)$, is the matroid on $E(G)$ obtained by applying Theorem \ref{thm: brylawski} with $M(G)$ and the linear class $\cB$.
The circuits of $L(G, \cB)$ are the balanced cycles, the tight handcuffs with both cycles unbalanced, the theta graphs with all cycles unbalanced, and the pairs of vertex-disjoint unbalanced cycles.
Note that $F(G, \cB) = L(G, \cB)$ if and only if $(G, \cB)$ has no vertex-disjoint unbalanced cycles.
If every cycle is balanced then the circuits of $L(G, \cB)$ are precisely the cycles of $G$, so $L(G, \cB)$ is equal to the graphic matroid of $G$.
For a gain graph $(G, \psi)$ we will write $L(G, \psi)$ for $L(G, \cB_{\psi})$.
A matroid is \emph{lifted-graphic} if it is the lift matroid of a biased graph, and for every group $\Gamma$, a \emph{$\Gamma$-lifted-graphic matroid} is the lift matroid of a biased graph arising from a $\Gamma$-gain graph.
The class of lifted-graphic matroids is minor-closed, as is the class of $\Gamma$-lifted-graphic matroids for any group $\Gamma$ \cite{Zaslavsky1991}.
Important types of lifted-graphic matroids include even-cycle matroids and spikes.

\section{The construction} \label{sec: the construction}

In this section we will prove Theorem \ref{thm: main}, the main result of this paper.
Given a group $\Gamma$ with a normal subgroup $\Gamma_1$ and a partition $\{\Gamma_1\} \cup \cA$ of $\Gamma$ so that each $A \in \cA$ is malnormal and has all conjugates in $\cA$, the following key definition takes in a $\Gamma$-gain graph $(G, \psi)$ and a defines a linear class of circuits of the underlying $(\Gamma/\Gamma_1)$-frame matroid $F(G, \psi/\Gamma_1)$.

\begin{definition} \label{def: the linear class}
Let $\Gamma$ be a group, let $\Gamma_1$ be a normal subgroup of $\Gamma$, and let $\{\Gamma_1\} \cup \cA$ be a partition of $\Gamma$ so that each $A \in \cA$ is malnormal and has all conjugates in $\cA$.
For a $\Gamma$-gain graph $(G, \psi)$, define $\cC(\Gamma, \Gamma_1, \cA, G, \psi)$ to be the set of circuits of the underlying $(\Gamma/\Gamma_1)$-frame matroid $F(G, \psi/\Gamma_1)$ so that $C \in \cC(\Gamma, \Gamma_1, \cA, G, \psi)$ if and only if 
\begin{itemize}
    \item $C$ is a $\psi$-balanced cycle, or
    \item $C$ is a handcuff or a theta graph and there is a switching function $\eta$ and a group $A \in \cA$ so that $\psi^{\eta}(C) \subseteq A$.
\end{itemize}
\end{definition}

Clearly the set $\cC(\Gamma, \Gamma_1, \cA, G, \psi)$ is invariant under switching.
We will prove that $\cC(\Gamma, \Gamma_1, \cA, G, \psi)$ is a linear class of circuits of the frame matroid $F(G, \psi/\Gamma_1)$.
Before this, we will give some alternate characterizations of $\cC(\Gamma, \Gamma_1, \cA, G, \psi)$ which will be useful in the rest of the paper.
Notably, we can also define $\cC(\Gamma, \Gamma_1, \cA, G, \psi)$ using walks on $(G, \psi)$.
Inspired by \cite[Definition 5.8]{Bernstein2022}, we say that a \emph{cyclic covering pair} of walks for a handcuff or theta graph $C$ is a pair $(W_1, W_2)$ of closed walks on $C$, starting at the same vertex, so that each walk contains exactly one cycle of $C$, the walks do not contain the same cycle, and each edge of $C$ is traversed once or twice by the concatenation $W_1W_2$.
Note that there are edges $e_1$ and $e_2$ of $C$ so that $G[C - \{e_1, e_2\}]$ is a tree, and for $i = 1,2$ the edge $e_i$ is in $W_i$ but not $W_{3-i}$.

\begin{lemma} \label{lem: equivalences for the linear class}
Let $\Gamma$ be a group, let $\Gamma_1$ be a normal subgroup of $\Gamma$, and let $\{\Gamma_1\} \cup \cA$ be a partition of $\Gamma$ so that each $A \in \cA$ is malnormal and has all conjugates in $\cA$.
Let $(G, \psi)$ be a $\Gamma$-gain graph and let $C$ be a theta graph or handcuff circuit of $F(G, \psi/\Gamma_1)$.
The following are equivalent:
\begin{enumerate}[label=$(\roman*)$]
\item $C \in \cC(\Gamma, \Gamma_1, \cA, G, \psi)$.

\item For every switching function $\eta$ so that $C$ has a $\psi^{\eta}$-normalized spanning tree, there is a group $A \in \cA$ so that $\psi^{\eta}(C) \subseteq A$.

\item There is a cyclic covering pair $(W_1, W_2)$ of walks for $C$ and a group $A \in \cA$ so that $\psi(W_1), \psi(W_2) \in A$.

\item For every cyclic covering pair $(W_1, W_2)$ of walks for $C$ there is a group $A \in \cA$ so that $\psi(W_1), \psi(W_2) \in A$.
\end{enumerate}
\end{lemma}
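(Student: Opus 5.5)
The plan is to prove the cycle of implications $(ii)\Rightarrow(i)\Rightarrow(iv)\Rightarrow(iii)\Rightarrow(ii)$. Two elementary facts do most of the work. First, for any switching function $\eta$ and any closed walk $W$ based at a vertex $v$, we have $\psi^{\eta}(W)=\eta(v)^{-1}\psi(W)\eta(v)$. Second, $\cA$ is closed under conjugation, so membership of gain values in a common $A\in\cA$ is preserved up to replacing $A$ by a conjugate.

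The implication $(ii)\Rightarrow(i)$ is immediate. Lemma \ref{lem: normalize a forest} supplies some $\eta$ for which a spanning tree of $C$ is $\psi^{\eta}$-normalized, and $(ii)$ then gives $\psi^{\eta}(C)\subseteq A$. The implication $(iv)\Rightarrow(iii)$ is trivial once we note that a cyclic covering pair exists for every theta graph or handcuff.

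For $(i)\Rightarrow(iv)$, take $\eta$ and $A$ with $\psi^{\eta}(C)\subseteq A$, and let $(W_1,W_2)$ be any cyclic covering pair based at $v$. Each $\psi^{\eta}(W_i)$ is a product of elements of $A$, so it lies in $A$. By the first fact, $\psi(W_i)=\eta(v)\psi^{\eta}(W_i)\eta(v)^{-1}$ lies in $\eta(v)A\eta(v)^{-1}$, and this subgroup is in $\cA$.

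The main step is $(iii)\Rightarrow(ii)$. Fix $\eta$ such that a spanning tree $T$ of $C$ is $\psi^{\eta}$-normalized. Then $C-E(T)$ consists of exactly two edges $f_1,f_2$, because $C$ is a circuit of the frame matroid with $|C|=|V(C)|+1$. The pair $(W_1,W_2)$ from $(iii)$ has $\psi(W_i)\in A$, so $\psi^{\eta}(W_i)$ lies in a conjugate $A'\in\cA$. The difficulty is that $W_1,W_2$ need not be the fundamental walks of $T$. To handle this, I would use that in $\psi^{\eta}$ only $f_1,f_2$ carry nontrivial gains. Each $W_i$ contains exactly one cycle of $C$, and each cycle of $C$ uses at least one of $f_1,f_2$. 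Hence $\psi^{\eta}(W_i)$ is a word in $g_1=\psi^{\eta}(f_1)$ and $g_2=\psi^{\eta}(f_2)$ (with fixed orientations), in which each letter appears at most once per traversal. Concretely, depending on the configuration, $\psi^{\eta}(W_1)$ and $\psi^{\eta}(W_2)$ are, up to inversion and cyclic conjugation by a tree path (which is trivial), among $g_1$, $g_2$, $g_1g_2^{\pm1}$.

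I then want to show $g_1,g_2\in A'$. If one walk has value $g_1$ and the other $g_2$, this is immediate. Otherwise, say the values are $g_1$ and $g_1g_2^{\pm1}$, so $g_2^{\pm1}=g_1^{-1}(g_1g_2^{\pm1})\in A'$. The remaining case to rule out is that the two values are $g_1g_2$ and $g_1g_2^{-1}$ (the theta case with $f_1,f_2$ on two different paths). Here I would use that every element and subgroup involved is nontrivial, the partition property, and malnormality. Since $C$ is a circuit of the frame matroid, its cycles are unbalanced in $\psi/\Gamma_1$. If $g_1\notin A'$, I would try to derive that some conjugate of $A'$ meets $A'$ nontrivially, contradicting malnormality. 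If this direct case analysis becomes messy, the fallback is to choose $T$ compatibly with $(W_1,W_2)$, namely so that $W_1,W_2$ become the fundamental cycles of $f_1,f_2$. By the argument just given, the conclusion then holds for that particular $\eta$. I would finish by showing that any two switchings normalizing spanning trees of $C$ differ, on $C$, by a constant conjugation combined with re-expressing generators, using again that a subgroup of $\cA$ containing $g_1,g_2$ contains every product of them.

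I expect this last step to be the main obstacle: transferring the condition between different normalized spanning trees, or equivalently handling non-fundamental covering pairs, where malnormality must genuinely be used.
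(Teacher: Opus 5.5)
\textbf{Gap in $(iii)\Rightarrow(ii)$.} Your easy implications, your proof of $(i)\Rightarrow(iv)$, and your reduction of $(iii)\Rightarrow(ii)$ to walk values that are words in $g_1,g_2$ all match the paper's proof. This includes the subcase where the values are $g_1$ and $g_1g_2^{\pm1}$. The gap is the case you leave open, where the values are $g_1g_2$ and $g_1g_2^{-1}$. For it you only announce a malnormality argument or a tree-transfer fallback.

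The malnormality route cannot work. The two memberships $g_1g_2,\,g_1g_2^{-1}\in A'$ do not force $g_1,g_2\in A'$, even when $g_1,g_2,g_1g_2\notin\Gamma_1$. For instance, take $\bF_5^+\rtimes\bF_5^{\times}$ with $(a,b)\circ(c,d)=(a+bc,bd)$ and $A'=\{(0,b)\}$. The involution $g_2=(1,-1)$ and $g_1=(2,3)$ give $g_1g_2=g_1g_2^{-1}=(0,2)\in A'$, yet $g_1,g_2\notin A'$. In fact the paper's proof of this lemma never uses malnormality. It uses only that the members of $\cA$ are subgroups and that $\cA$ is closed under conjugation.

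\textbf{The missing idea: the open case is impossible.} Removing two edges from a theta graph to leave a spanning tree forces them onto different $u$--$v$ paths, say $f_1\in P_1$ and $f_2\in P_2$. Then $P_1\cup P_2$ is the only cycle containing both, while $P_1\cup P_3$ contains only $f_1$ and $P_2\cup P_3$ contains only $f_2$. In a handcuff, the connecting path consists of bridges, so it lies in $T$, and each cycle contains exactly one of $f_1,f_2$.

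Since $W_1$ and $W_2$ contain distinct cycles, one of them (say $W_1$) has exactly one of these edges on its cycle (say $f_1$). The condition that each edge is traversed at most twice by $W_1W_2$ rules out detours off the cycles, except along the connecting path of a loose handcuff, which is $\psi^{\eta}$-normalized. Hence $\psi^{\eta}(W_1)\in\psi^{\eta}(f_1)$. Also, $\psi^{\eta}(W_2)$ is an element of $\psi^{\eta}(f_2)$, possibly multiplied by an element of $\psi^{\eta}(f_1)$. This is exactly your easy case, and it gives $\psi^{\eta}(f_1)\cup\psi^{\eta}(f_2)\subseteq A'$, so $\psi^{\eta}(C)\subseteq A'$. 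This is the paper's argument.

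Your fallback could also be completed, though it is unnecessary. Suppose $\psi^{\eta}(C)\subseteq A'$ and $\eta'$ normalizes another spanning tree. Propagating along that tree gives $\eta(x)^{-1}\eta'(x)\in A'\,\eta(x_0)^{-1}\eta'(x_0)$ for every vertex $x$. Hence $\psi^{\eta'}(C)$ lies in a conjugate of $A'$. As written, however, this step is only announced, not proved.
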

\begin{proof}
Let $\circ$ be the group operation of $\Gamma$.
We will show the implications $(i) \Rightarrow (iv) \Rightarrow (iii) \Rightarrow (ii) \Rightarrow (i)$.
Clearly $(iv)$ implies $(iii)$ and $(ii)$ implies $(i)$.
We will first show that $(i)$ implies $(iv)$.
Suppose that $(i)$ holds for switching function $\eta$ and group $A \in \cA$.
Let $(W_1, W_2)$ be any cyclic covering pair of walks for $C$ starting at a vertex $v$.
Since $\psi^{\eta}(C) \subseteq A$ we see that $\psi^{\eta}(W_i) \in A$ for $i = 1,2$.
For $i = 1,2$ we have $\psi(W_i) = \eta(v) \circ \psi^{\eta}(W_i) \circ \eta(v)^{-1}$ (see Section \ref{sec: gain graphs}), so $\psi(W_i) \in \eta(v) \circ A \circ \eta(v)^{-1}$.
Since $\eta(v) \circ A \circ \eta(v)^{-1} \in \cA$ we see that $(iv)$ holds.

We will complete the proof by showing that $(iii)$ implies $(ii)$.
Suppose that $(iii)$ holds for $(W_1, W_2)$ and $A \in \cA$, and let $v$ be the starting vertex of $W_1$ and $W_2$.
Let $\eta$ be a switching function so that $C$ has a $\psi^{\eta}$-normalized spanning tree $T$, and let $e_1,e_2 \in C - T$.
Let $A' = \eta(v)^{-1} \circ A \circ \eta(v)$, so $A' \in \cA$.
Note that $\psi^{\eta}(W_i) = \eta(v)^{-1}\circ \psi(W_i) \circ \eta(v)$ for $i = 1,2$, so $\psi^{\eta}(W_i) \in A'$.
By the definition of cyclic covering pair we see that $W_1$ and $W_2$ each traverse one of $e_1, e_2$, and either $W_1$ or $W_2$ traverses exactly one of $e_1, e_2$.
We may assume that $W_1$ traverses $e_1$ but not $e_2$, and $W_2$ traverses $e_2$.
Since $T$ is $\psi^{\eta}$-normalized this implies that $\psi^{\eta}(W_1) \in \psi^{\eta}(e_1)$, and since $\psi^{\eta}(W_1) \in A'$ we see that $\psi^{\eta}(e_1) \subseteq A'$.
Note that $\psi^{\eta}(W_2)$ is the product of an element in $\psi^{\eta}(e_2)$ and possibly an element in $\psi^{\eta}(e_1)$ (if $W_2$ traverses $e_1$).
Since $\psi^{\eta}(W_2)$ and $\psi^{\eta}(e_1)$ are both in $A'$ it follows that $\psi^{\eta}(e_2)$ is in $A'$.
Since $T$ is $\psi^{\eta}$-normalized and $\psi^{\eta}(e_i)\in A'$ for $i = 1,2$ it follows that $\psi^{\eta}(C) \subseteq A'$, as desired.
\end{proof}

Our proofs in this paper will only make use of the equivalence between $(i)$ and $(ii)$, but the equivalences between $(i)$, $(iii)$, and $(iv)$ will be useful in Section \ref{sec: examples} when we compare Theorem \ref{thm: main} with the constructions of Anderson, Su, and Zaslavsky \cite{Anderson-Su-Zaslavsky2024} and Bernstein \cite{Bernstein2022}.

We will now prove that $\cC(\Gamma, \Gamma_1, \cA, G, \psi)$ is a linear class of circuits of the frame matroid $F(G, \psi/\Gamma_1)$.
Combined with Theorem \ref{thm: brylawski}, this defines an elementary lift of $F(G, \psi/\Gamma_1)$, proving Theorem \ref{thm: main}, our main result.

\begin{theorem} \label{thm: main precise}
Let $\Gamma$ be a group, let $\Gamma_1$ be a normal subgroup of $\Gamma$, let $\{\Gamma_1\} \cup \cA$ be a partition of $\Gamma$ so that each $A \in \cA$ is malnormal and has all conjugates in $\cA$, and let $(G, \psi)$ be a $\Gamma$-gain graph.
Then $\cC(\Gamma, \Gamma_1, \cA, G, \psi)$ is a linear class of circuits of the underlying $(\Gamma/\Gamma_1)$-frame matroid $F(G, \psi/\Gamma_1)$.
\end{theorem}
\begin{proof}
We will write $N$ for the frame matroid $F(G, \psi/\Gamma_1)$, $\cC$ for $\cC(\Gamma, \Gamma_1, \cA, G, \psi)$, and $\ep$ for the identity element of $\Gamma$.
Let $C_1, C_2 \in \cC$ with $|C_1 \cup C_2| = r_N(C_1 \cup C_2) + 2$, and let $C$ be a circuit of $N$ contained in $C_1 \cup C_2$ with $C \notin \{C_1, C_2\}$.
We will show that $C \in \cC$.
We first prove two important properties of the graph $G[C_1 \cup C_2]$.
The \emph{cyclomatic number} of a graph is the minimum number of edges whose deletion leaves a forest.

\begin{claim} \label{claim: cyclomatic number 3}
The graph $G[C_1 \cup C_2]$ is connected and has cyclomatic number at most $3$.
\end{claim}
\begin{proof}
Note that $G[C_1]$ and $G[C_2]$ are both connected graphs.
Then $G[C_1 \cup C_2]$ is connected or else $C \in \{C_1, C_2\}$.
Since $|C_1 \cup C_2| = r_N(C_1 \cup C_2) + 2$, we can delete two edges from $C_1 \cup C_2$ to obtain an independent set $I$ of the frame matroid $N$.
This implies that each component of $G[I]$ has at most one cycle.
If $G[I]$ is connected, then the claim follows.
If $G[I]$ is disconnected, then it has at most three connected components because it is obtained by deleting two edges from a connected graph.
If it has three components, then neither deletion decreased the cyclomatic number and the claim follows.
If it has two components, then one of the two deletions did not decrease the cyclomatic number and the claim follows.
\end{proof}

If $G[C_1 \cup C_2]$ has cyclomatic number $2$ then $C_1$ and $C_2$ are both $\psi$-balanced cycles.
If they share no edges, then $C \in \{C_1, C_2\}$.
If they share an edge then $G[C_1 \cup C_2]$ is a theta graph, and it follows from the theta property for $\psi$-balanced cycles that $C$ is a $\psi$-balanced cycle and is therefore in $\cC$.
So for the remainder of the proof we may assume that $G[C_1 \cup C_2]$ has cyclomatic number $3$.
The next claim deals with $\psi$-balanced cycles.

\begin{claim} \label{claim: no balanced cycles}
If $G[C_1 \cup C_2]$ contains a $\psi$-balanced cycle, then $C \in \cC$.
\end{claim}
\begin{proof}
Suppose $G[C_1 \cup C_2]$ contains a $\psi$-balanced cycle $D$.
Let $T$ be a spanning tree of $G[C_1 \cup C_2]$ that contains all but one edge of $D$.
By switching, we may assume that $T$ is $\psi$-normalized.
Since $D$ is $\psi$-balanced, it follows that $D$ is $\psi$-normalized.
Since $G[C_1 \cup C_2]$ has cyclomatic number $3$, there are two edges $e$ and $f$ in $(C_1 \cup C_2) - (T \cup D)$.
We will first show that there is some $A \in \cA$ that contains $\psi(e)$ and $\psi(f)$.
If $C_1$ is a handcuff or a theta graph then $e,f \in C_1$, and since $C_1 \in \cC$ there is some $A \in \cA$ that contains $\psi(e)$ and $\psi(f)$.
The same reasoning applies if $C_2$ is a handcuff or a theta graph, so we may assume that $C_1$ and $C_2$ are both $\psi$-balanced cycles.
By choosing $D = C_1$ we may assume that $C_1$ is $\psi$-normalized.
Then $e$ and $f$ are both in $C_2$, and since $C_2$ is $\psi$-balanced and every edge other than $e$ and $f$ is $\psi$-normalized it follows that $\psi(e) = \psi(f) = \{\alpha, \alpha^{-1}\}$ for some $\alpha \in \Gamma - \{\ep\}$.
If $\alpha \in \Gamma_1$ then $C_1 \cup C_2$ is $(\psi/\Gamma_1)$-normalized and so $r_N(C_1 \cup C_2) = |V(G[C_1 \cup C_2])| - 1 = |C_1 \cup C_2| - 2 - 1$ because $G[C_1 \cup C_2]$ has cyclomatic number $3$.
But then $|C_1 \cup C_2| - r_N(C_1 \cup C_2) = 3$, a contradiction.
So $\alpha \notin \Gamma_1$, so there is some $A \in \cA$ with $\alpha \in A$, and therefore $A$ contains $\psi(e)$ and $\psi(f)$.

If $C$ is a handcuff or a theta graph, then $e,f \in C$ and it follows that $C \in \cC$.
So we may assume that $C$ is a $(\psi/\Gamma_1)$-balanced cycle.
We must show that $C$ is $\psi$-balanced.
If $C$ contains neither $e$ nor $f$, then $C$ is $\psi$-normalized.
If $C$ contains exactly one of $e$ and $f$, then $C$ is not $(\psi/\Gamma_1)$-balanced because $\alpha \notin \Gamma_1$, a contradiction.
So $C$ contains $e$ and $f$.
Let $W$ be a simple closed walk around $C$.
Then $\psi(W)$ is the product of an element in $\psi(e)$ and an element in $\psi(f)$, so $\psi(W) \in A$.
And $\psi(W) \in \Gamma_1$ because $C$ is $(\psi/\Gamma_1)$-balanced.
So $\psi(W) = \ep$ because $A \cap \Gamma_1 = \{\ep\}$, and therefore $C$ is $\psi$-balanced and in $\cC$, as desired.
\end{proof}

By Claim \ref{claim: no balanced cycles} we may assume that $G[C_1 \cup C_2]$ contains no $\psi$-balanced cycles. 
This implies that $C_1$ and $C_2$ are not $(\psi/\Gamma_1)$-balanced; if $C_i$ were $(\psi/\Gamma_1)$-balanced, it would be $\psi$-balanced because it is in $\cC$.
Note that $C$ may be $(\psi/\Gamma_1)$-balanced.
We consider one more special case.

\begin{claim} \label{claim: only one cycle}
If $G[C_1 \cap C_2]$ has at most one cycle, then $C \in \cC$.
\end{claim}
\begin{proof}
Suppose that $G[C_1 \cap C_2]$ has at most one cycle.
Then there is an edge $e_1 \in C_1 - C_2$ that is in a cycle of $G[C_1]$ and an edge $e_2 \in C_2 - C_1$ that is in a cycle of $G[C_2]$.
Moreover, $G[C_1] \del e_1$ and $G[C_2] \del e_2$ are both connected, and since they share at least one vertex it follows that $G[C_1 \cup C_2] \del \{e_1, e_2\}$ is connected.
Since $G[C_1 \cup C_2]$ has cyclomatic number $3$ we see that $G[C_1 \cup C_2] \del \{e_1, e_2\}$ has exactly one cycle.
Let $e_3$ be an edge of this cycle, and note that $G[C_1 \cup C_2] \del \{e_1, e_2, e_3\}$ is a spanning tree of $G[C_1 \cup C_2]$.
By switching we may assume that it is $\psi$-normalized.
Note that for $i = 1,2,3$, $\psi(e_i) \ne \{\ep\}$ because $G[C_1 \cup C_2]$ contains no $\psi$-balanced cycles.
Since $C_1$ contains $e_1$ and $e_3$ but not $e_2$ and $C_1$ is a handcuff or theta graph, there is some $A_1 \in \cA$ that contains $\psi(e_1)$ and $\psi(e_3)$ since $C_1 \in \cC$.
Similarly, there is some $A_2 \in \cA$ that contains $\psi(e_2)$ and $\psi(e_3)$.
Then $A_1 = A_2$ because both contain $\psi(e_3)$.
Let $T$ be a spanning tree of $G[C]$.
By switching within $A_1$ we can normalize $T$ so that the edge(s) in $C - T$ have gain values in $A_1$.
If $C$ is a theta graph or a handcuff this implies that $C \in \cC$, so we may assume that $C$ is a $(\psi/\Gamma_1)$-balanced cycle.
Let $W$ be a simple closed walk around $C$.
Clearly $\psi(W) \in A_1$.
Also, $\psi(W) \in \Gamma_1$ since $C$ is $(\psi/\Gamma_1)$-balanced.
Therefore $\psi(W) = \ep$ since $A_1 \cap \Gamma_1 = \{\ep\}$, so $C \in \cC$.
\end{proof}

By Claim \ref{claim: only one cycle} we may assume that $G[C_1 \cap C_2]$ contains at least two cycles.
And by Claim \ref{claim: no balanced cycles} we may assume that $C_1$ and $C_2$ are not cycles.
Since $C_1$ has cyclomatic number two it follows that $G[C_1 \cap C_2]$ has cyclomatic number two.
Therefore for $i = 1,2$, $C_i$ is a non-minimal graph with cyclomatic number two, so $C_1$ and $C_2$ are loose handcuffs that contain the same pair of cycles, say $D$ and $D'$.
For $i = 1,2$, let $P_i$ be the path in $G[C_i]$ between $V(D)$ and $V(D')$.
Let $v$ be the end of $P_2$ in $V(D)$.
Let $e$ and $f$ be edges of $D$ and $P_2$, respectively, that are incident with $v$.
Let $e'$ be any edge of $D'$.
By switching we may assume that the spanning tree $(C_1 \cup C_2) - \{e, e', f\}$ of $G[C_1 \cup C_2]$ is $\psi$-normalized.
Since $C_1$ contains $e$ and $e'$ but not $f$ and $C_1 \in \cC$, there is some $A_1 \in \cA$ that contains $\psi(e)$ and $\psi(e')$.
Let $\{w,v\}$ and $\{u,v\}$ be the sets of ends of $e$ and $f$, respectively.
Let $\psi'$ be the $\Gamma$-gain function obtained from $\psi$ by switching by $\psi(f, u, v)^{-1}$ at each vertex in $V(D)$.
Then $\psi'(f) = \ep$ and $\psi'(e') = \psi(e')$, while $\psi'(e, v, w) = \psi(f, u, v) \circ \psi(e, v, w) \circ \psi(f, u, v)^{-1}$ and $\psi'(g) = \{\ep\}$ for each edge $g$ in $C_2 - \{e, e'\}$.
Since $C_2 \in \cC$, there is some $A_2 \in \cA$ that contains $\psi'(e')$ and $\psi'(e, v, w)$.
Since $\psi'(e') = \psi(e')$ we see that $A_2 = A_1$.
Since $A_1$ contains $\psi(e, v, w)$ and $\psi(f, u, v) \circ \psi(e, v, w) \circ \psi(f, u, v)^{-1}$ and $A_1$ is malnormal it follows that $\psi(f, u, v) \in A_1$.
So $A_1$ contains $\psi(e) \cup \psi(e') \cup \psi(f)$, and therefore $A_1$ contains $\psi(C_1 \cup C_2)$.
Let $T$ be a spanning tree of $G[C]$.
By starting with $\psi$ and switching within $A_1$ we can $\psi$-normalize $T$ so that the edge(s) in $T - C$ have gain values in $A_1$.
If $C$ is a theta graph or a handcuff this implies that $C \in \cC$, so we may assume that $C$ is a $(\psi/\Gamma_1)$-balanced cycle.
Let $W$ be a simple closed walk around $C$.
Clearly $\psi(W) \in A_1$.
Also, $\psi(W) \in \Gamma_1$ since $C$ is $(\psi/\Gamma_1)$-balanced.
Therefore $\psi(W) = \ep$ since $A_1 \cap \Gamma_1 = \{\ep\}$, so $C \in \cC$.
\end{proof}

Theorem \ref{thm: main precise} allows us to define, via Theorem \ref{thm: brylawski}, the elementary lift of $F(G, \psi/\Gamma_1)$ associated with the linear class $\cC(\Gamma, \Gamma_1, \cA, G, \psi)$; this is the matroid of Theorem \ref{thm: main}.

\begin{definition} \label{def: the matroid}
Let $\Gamma$ be a group, let $\Gamma_1$ be a normal subgroup of $\Gamma$, let $\{\Gamma_1\} \cup \cA$ be a partition of $\Gamma$ so that each $A \in \cA$ is malnormal and has all conjugates in $\cA$, and let $(G, \psi)$ be a $\Gamma$-gain graph.
Define $M(\Gamma, \Gamma_1, \cA, G, \psi)$ to be the elementary lift of $F(G, \psi/\Gamma_1)$ given by Theorem \ref{thm: brylawski} for the linear class $\cC(\Gamma, \Gamma_1, \cA, G, \psi)$.
\end{definition}

The following properties follow directly from Definitions \ref{def: the linear class} and \ref{def: the matroid} and will be useful throughout the rest of the paper.

\begin{lemma} \label{lem: placement of loops}
Let $\Gamma$ be a group with identity $\ep$, let $\Gamma_1$ be a normal subgroup of $\Gamma$, and let $\{\Gamma_1\} \cup \cA$ be a partition of $\Gamma$ so that each $A \in \cA$ is malnormal and has all conjugates in $\cA$.
Let $(G, \psi)$ be a $\Gamma$-gain graph and let $e$ be a loop of $G$ at vertex $v$. 
\begin{enumerate}[label=$(\alph*)$]
    \item If $\psi(e, v, v) \in \Gamma_1 - \{\ep\}$, then replacing $\psi(e, v, v)$ with any other element in $\Gamma_1 - \{\ep\}$ does not change the associated matroid.

    \item If $\psi(e, v, v) \in A - \{\ep\}$ for some $A \in \cA$, then replacing $\psi(e, v, v)$ with any other element in $A - \{\ep\}$ does not change the associated matroid.

    \item If $\psi(e, v, v) \in \Gamma_1 - \{\ep\}$ and $(G', \psi')$ is the gain graph obtained from $(G, \psi)$ by moving $e$ to a different vertex $w$ and setting $\psi'(e, w, w) = \psi(e, v, v)$, then $M(\Gamma, \Gamma_1, \cA, G', \psi') = M(\Gamma, \Gamma_1, \cA, G, \psi)$.
\end{enumerate}
\end{lemma}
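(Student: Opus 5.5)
The plan is to reduce everything to the rank function from Theorem \ref{thm: brylawski}. By Definition \ref{def: the matroid}, for $X \subseteq E(G)$ the rank of $X$ in $M(\Gamma, \Gamma_1, \cA, G, \psi)$ is $r_N(X)$ or $r_N(X)+1$, where $N = F(G, \psi/\Gamma_1)$. The first case occurs exactly when every circuit of $N|X$ lies in $\cC(\Gamma, \Gamma_1, \cA, G, \psi)$. So in each of $(a)$, $(b)$, $(c)$ it suffices to prove two things for the modified gain graph $(G', \psi')$. First, $F(G, \psi/\Gamma_1) = F(G', \psi'/\Gamma_1)$, with the same circuits. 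Second, for every circuit $C$ of this common frame matroid, $C \in \cC(\Gamma,\Gamma_1,\cA,G,\psi)$ if and only if $C \in \cC(\Gamma,\Gamma_1,\cA,G',\psi')$.

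\emph{Frame matroid.} In $(a)$ and $(c)$ the loop $e$ is $(\psi/\Gamma_1)$-balanced both before and after the change, so in $N$ it is a loop in both cases. In $(c)$ its position is therefore irrelevant: it appears in no other circuit, and the remaining edges are untouched. In $(b)$, $A \cap \Gamma_1 = \{\ep\}$, so a nonidentity element of $A$ has nonidentity image in $\Gamma/\Gamma_1$. Hence $e$ is a $(\psi/\Gamma_1)$-unbalanced loop at $v$ both before and after, and nothing else changes. Since cycles other than $e$ do not use $e$, the biased graphs agree, and so do the frame matroids.

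\emph{Linear class.} In $(a)$ and $(c)$ the only circuit involving $e$ is $\{e\}$ itself, a cycle that is not $\psi$-balanced both before and after, so it is excluded from $\cC$ in both settings. Every other circuit avoids $e$, and its membership in $\cC$ depends only on the gains of its own edges and its own vertices. For $(c)$ we should check that moving $e$ does not change $V(G[C])$ for such $C$; it does not, since $e \notin C$. In $(b)$, balanced cycles are unaffected. The remaining circuits to check are the handcuffs and theta graphs containing $e$, where the loop $e$ is one of the unbalanced cycles of a handcuff.

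For these we use the equivalence $(i)\Leftrightarrow(ii)$ of Lemma \ref{lem: equivalences for the linear class}. Fix a switching $\eta$ normalizing a spanning tree $T$ of $C$. Then $\psi^{\eta}(e,v,v) = \eta(v)^{-1}\psi(e,v,v)\eta(v)$ lies in $A^{\eta(v)} = \eta(v)^{-1}A\eta(v) \in \cA$, and the same holds for the replacement value, which also lies in $A - \{\ep\}$. The condition $\psi^{\eta}(C) \subseteq A'$ for some $A' \in \cA$ forces $A' = A^{\eta(v)}$, because the partition is disjoint on nonidentity elements and $\psi^{\eta}(e,v,v) \neq \ep$. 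The condition on the other edges is then identical for $\psi$ and $\psi'$. So $C$ is in the class for $\psi$ if and only if it is in the class for $\psi'$.

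The main subtlety is this partition-uniqueness step in $(b)$, together with confirming that switching conjugates the loop gain within the same conjugate of $A$. Everything else is bookkeeping. Since the two linear classes coincide, Theorem \ref{thm: brylawski} yields identical rank functions, which proves the lemma.
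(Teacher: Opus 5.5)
Your proposal is correct and is exactly the unpacking the paper intends when it says the lemma follows directly from Definitions \ref{def: the linear class} and \ref{def: the matroid}: in each part you show that the frame matroid $F(G,\psi/\Gamma_1)$ and the linear class $\cC$ are unchanged, so Theorem \ref{thm: brylawski} produces the same lift. For $(b)$ you could even bypass Lemma \ref{lem: equivalences for the linear class} and argue straight from Definition \ref{def: the linear class}, since any witnessing group for a handcuff through $e$ must be $\eta(v)^{-1}A\eta(v)$, and that group also contains the conjugated replacement value.
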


In Section \ref{sec: properties} we will prove some important properties of $M(\Gamma, \Gamma_1, \cA, G, \psi)$, and then in Section \ref{sec: examples} we will look at examples of $M(\Gamma, \Gamma_1, \cA, G, \psi)$ for interesting choices of $\Gamma$.

We will conclude this section with a comment about the proof of Theorem \ref{thm: main precise}.
By examining the proof we see that the groups in $\cA$ are only required to be malnormal in the very last paragraph.
In this paragraph, there is a pair of vertex-disjoint $(\psi/\Gamma_1)$-unbalanced cycles.
So if we restrict our attention to $\Gamma$-gain graphs with no vertex-disjoint $(\psi/\Gamma_1)$-unbalanced cycles then we can obtain a more general construction in which the groups in $\cA$ are not required to be malnormal.
Since biased graphs with no vertex-disjoint unbalanced cycles are well-studied and complex (see \cite{Chen-Pivotte-2018}), this may be an interesting direction for future work.

\section{Properties} \label{sec: properties}

In this section we will fix a group $\Gamma$ with identity $\ep$, a normal subgroup $\Gamma_1$ of $\Gamma$, and a partition $\{\Gamma_1\} \cup \cA$ of $\Gamma$ so that each $A \in \cA$ is malnormal and has all conjugates in $\cA$.
We will study properties of the following class of matroids.

\begin{definition} \label{def: the class of matroids}
Let $\cM(\Gamma, \Gamma_1, \cA)$ be the class of matroids so that $M \in \cM(\Gamma, \Gamma_1, \cA)$ if there is a $\Gamma$-gain graph $(G, \psi)$ so that $M \cong M(\Gamma, \Gamma_1, \cA, G, \psi)$.
\end{definition}

We will write down the rank function, bases, and circuits of a given matroid in $\cM(\Gamma, \Gamma_1, \cA)$, prove that $\cM(\Gamma, \Gamma_1, \cA)$ is minor-closed if $\Gamma/\Gamma_1$ is isomorphic to a group in $\cA$ (which is always the case when $\Gamma$ is finite by Proposition \ref{prop: Frobenius properties}), show that $\cM(\Gamma, \Gamma_1, \cA)$ is not closed under duality, and show that $\cM(\Gamma, \Gamma_1, \cA)$ is not closed under direct sums unless $\Gamma_1$ is trivial.

\subsection{Rank, bases, and circuits} \label{sec: rank, bases, circuits}

The following proposition gives the rank function of a matroid in $\cM(\Gamma, \Gamma_1, \cA)$.
The proof, which we omit, follows directly from Proposition \ref{prop: rank of X in frame matroid} together with the application of Theorem \ref{thm: brylawski} to the linear class of Definition \ref{def: the linear class}.

\begin{proposition} \label{prop: rank function}
Let $(G, \psi)$ be a $\Gamma$-gain graph, let $M = M(\Gamma, \Gamma_1, \cA, G, \psi)$, and let $N$ be the frame matroid of $(G, \psi/\Gamma_1)$.
Then for every set $X$ of edges of $G$, the rank of $X$ in the matroid $M(\Gamma, \Gamma_1, \cA, G, \psi)$ is
$$r(X) = |V(G[X])| - b(X) + l(X),$$
where $b(X)$ is the number of $(\psi/\Gamma_1)$-balanced components of $G[X]$, and $l(X) = 0$ if every circuit of $N|X$ is in $\cC(\Gamma, \Gamma_1, \cA, G, \psi)$ and $l(X) = 1$ otherwise.
\end{proposition}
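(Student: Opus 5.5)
The rank formula is a direct composition of two earlier results, so the plan is simply to chain them and check that the pieces match. By Definition~\ref{def: the matroid}, $M = M(\Gamma, \Gamma_1, \cA, G, \psi)$ is the elementary lift of $N = F(G, \psi/\Gamma_1)$ obtained from Theorem~\ref{thm: brylawski} with the linear class $\cC = \cC(\Gamma, \Gamma_1, \cA, G, \psi)$. Theorem~\ref{thm: main precise} guarantees that $\cC$ really is a linear class of circuits of $N$, so Theorem~\ref{thm: brylawski} applies. For every $X \subseteq E(G)$ it gives
\[
r_M(X) = r_N(X) + l(X),
\]
where $l(X)=0$ exactly when each circuit of $N|X$ lies in $\cC$, and $l(X)=1$ otherwise. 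This is precisely the definition of $l(X)$ in the statement.

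Next I would evaluate $r_N(X)$ using Proposition~\ref{prop: rank of X in frame matroid}, applied to the biased graph $(G, \cB_{\psi/\Gamma_1})$. It gives
\[
r_N(X) = |V(G[X])| - b(X),
\]
where $b(X)$ counts the components of $G[X]$ that are balanced in this biased graph. The only point to verify is that these are the $(\psi/\Gamma_1)$-balanced components. That holds because $F(G, \psi/\Gamma_1)$ is by definition $F(G, \cB_{\psi/\Gamma_1})$, and a subgraph is balanced exactly when all of its cycles are $(\psi/\Gamma_1)$-balanced. Substituting this into the displayed formula for $r_M(X)$ yields the claimed expression.

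There is no real obstacle here. The only care needed is bookkeeping: the balance in $b(X)$ refers to the quotient gain function $\psi/\Gamma_1$, while membership in $\cC$ uses the full $\Gamma$-gains. One should also note that the relevant circuits are those of $N|X$, which are exactly the circuits of $N$ contained in $X$, matching the condition in Theorem~\ref{thm: brylawski}.
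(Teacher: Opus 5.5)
Your proposal is correct and matches the paper's argument. The paper omits the proof, saying it follows directly from Proposition~\ref{prop: rank of X in frame matroid} together with Theorem~\ref{thm: brylawski} applied to the linear class $\cC(\Gamma, \Gamma_1, \cA, G, \psi)$ (a linear class by Theorem~\ref{thm: main precise}), which is exactly the chain you give. Your remark that $b(X)$ counts components balanced with respect to $\psi/\Gamma_1$, while membership in $\cC$ uses the full $\Gamma$-gains, is the one point worth making explicit.
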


By comparing with the rank functions for lifted-graphic matroids and frame matroids (see \cite[page 16]{Bowler-Funk-Slilaty-2020}, for example), we obtain the following corollary.

\begin{proposition} \label{prop: special cases frame and lift matroids}
Let $(G, \psi)$ be a $\Gamma$-gain graph and let $M = M(\Gamma, \Gamma_1, \cA, G, \psi)$.
\begin{enumerate}[label=$(\alph*)$]
\item If $\Gamma_1$ is trivial then $M$ is the frame matroid of $(G, \psi)$.

\item If $\Gamma_1 = \Gamma$ then $M$ is the lift matroid of $(G, \psi)$.
\end{enumerate}
\end{proposition}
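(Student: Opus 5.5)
Both parts come down to comparing the rank formula of Proposition \ref{prop: rank function} with the known rank formulas. For a frame matroid the rank is $|V(G[X])| - b_\psi(X)$, where $b_\psi$ counts $\psi$-balanced components. For a lift matroid the rank is $|V(G[X])| - 1 + \delta(X)$ on each component, summed over components, i.e. $|V(G[X])| - c(X) + \delta(X)$, where $c(X)$ is the number of components of $G[X]$ and $\delta(X)=0$ if $X$ is $\psi$-balanced and $1$ otherwise. In each case I will identify the relevant linear class $\cC = \cC(\Gamma,\Gamma_1,\cA,G,\psi)$ explicitly and then check that the two formulas agree.

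For (a), suppose $\Gamma_1$ is trivial. Then $\Gamma/\Gamma_1 \cong \Gamma$, so $N = F(G,\psi)$ and $b(X) = b_\psi(X)$. It remains to show $l(X)=0$ for all $X$, that is, that $\cC$ contains every circuit of $N$. Balanced cycles lie in $\cC$ by definition. Now let $C$ be a theta graph or handcuff circuit of $N$. Normalize a spanning tree of $C$ using Lemma \ref{lem: normalize a forest}; then exactly two edges $e_1,e_2$ remain. Take a cyclic covering pair $(W_1,W_2)$ as in Lemma \ref{lem: equivalences for the linear class}$(iii)$. Since $\cA$ partitions $\Gamma - \{\ep\}$, each non-identity gain lies in exactly one member of $\cA$. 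The obstacle is that $\psi(e_1)$ and $\psi(e_2)$ need not lie in the same member, so it is not clear that $C \in \cC$. This suggests re-reading the definition: with $\Gamma_1$ trivial, the partition $\{\Gamma_1\}\cup\cA$ is just $\cA$, and for it to be a Frobenius partition of a nontrivial group with trivial kernel one needs $\cA = \{\Gamma\}$. Indeed, if $A \in \cA$ is proper, malnormal, and closed under conjugation, then the elements outside the conjugates of $A$ must lie in $\Gamma_1$, which is trivial. For finite $\Gamma$, Frobenius' theorem makes the kernel nontrivial, so $A$ cannot be proper. For infinite $\Gamma$, however, a proper malnormal subgroup can have conjugates covering $\Gamma$, as in \cite[Example 5.2]{delaHarpe-Weber-2014}.

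I would therefore handle the main case $\cA=\{\Gamma\}$, where $\psi^\eta(C)\subseteq \Gamma$ holds trivially, and address the general trivial-kernel case as follows. Let $C$ be an unbalanced circuit with two extra edges, and let $\alpha\in A$ be the gain of $e_1$. I would try to show that the gain of $e_2$ also lies in $A$, using the fact that the ambient partition is a Frobenius partition together with malnormality, as in the last paragraph of Theorem \ref{thm: main precise}. If that fails, I would state (a) under the assumption $\cA=\{\Gamma\}$ when $\Gamma$ is nontrivial. In that case $l\equiv 0$ and $r(X)=|V(G[X])|-b_\psi(X)$, which is the frame rank.

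For (b), suppose $\Gamma_1=\Gamma$. Then $\cA=\emptyset$, $\Gamma/\Gamma_1$ is trivial, and $N=M(G)$. Every component of $G[X]$ is then $(\psi/\Gamma_1)$-balanced, so $b(X)=c(X)$. Since $\cA$ is empty, $\cC$ consists exactly of the $\psi$-balanced cycles, which are the only circuits of $M(G)$ that can lie in $\cC$. Hence $l(X)=0$ exactly when every cycle in $X$ is $\psi$-balanced, so $l=\delta$ and the two rank functions coincide. Equivalently, Definition \ref{def: the matroid} reduces to the definition of $L(G,\psi)$ via Theorem \ref{thm: brylawski} with linear class $\cB_\psi$.
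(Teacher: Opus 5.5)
Your treatment of (b), and of (a) once $\cA=\{\Gamma\}$, is the paper's argument: compare the rank formula of Proposition \ref{prop: rank function} with the lift and frame rank formulas. For (a) this comparison silently needs $l\equiv 0$, i.e.\ every handcuff and theta circuit of $F(G,\psi)$ must lie in $\cC$. You were right to flag that this depends on $\cA$.

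The branch you leave open cannot be closed the way you propose, because (a) is false there. Suppose $\Gamma_1=\{\ep\}$ and $\cA$ has two distinct members $A\neq A'$. Let $G$ be a single vertex $v$ with two loops $e,f$, where $\psi(e,v,v)=\alpha\in A-\{\ep\}$ and $\psi(f,v,v)=\beta\in A'-\{\ep\}$. Then $\{e,f\}$ is a tight handcuff circuit of $F(G,\psi)$. Switching at $v$ by $\gamma$ moves the two gains into $\gamma^{-1}A\gamma$ and $\gamma^{-1}A'\gamma$, which are still distinct members of $\cA$. So no member ever contains both gains, and $\{e,f\}\notin\cC$. Hence $r_M(\{e,f\})=2$ while $r_{F(G,\psi)}(\{e,f\})=1$. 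No malnormality argument can put $\psi(e_2)$ into the member containing $\psi(e_1)$.

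Such triples do exist for infinite $\Gamma$: the paper itself cites \cite{delaHarpe-Weber-2014} for the failure of Proposition \ref{prop: Frobenius properties}$(v)$. So the correct statement is your fallback: for nontrivial $\Gamma$, (a) holds for all gain graphs exactly when $\cA=\{\Gamma\}$. The paper tacitly assumes this, as in the proof of Theorem \ref{thm: the converse}.

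The condition $\cA=\{\Gamma\}$ is automatic for finite $\Gamma$, but justify it via Proposition \ref{prop: Frobenius properties}$(v)$: trivial $\Gamma_1$ forces the partition to be trivial. Your own argument looks only at the conjugates of one $A$. It overlooks that the remaining non-identity elements could lie in a member of $\cA$ not conjugate to $A$, rather than in $\Gamma_1$.
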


By choosing a $\Gamma$-gain function with image contained in $\Gamma_1$ or in some $A \in \cA$ we obtain the following corollary.

\begin{proposition} \label{prop: special cases}
$\cM(\Gamma, \Gamma_1, \cA)$ contains the class of $\Gamma_1$-lifted-graphic matroids, and for each $A \in \cA$, $\cM(\Gamma, \Gamma_1, \cA)$ contains the class of $A$-frame matroids.
\end{proposition}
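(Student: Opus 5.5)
We need to prove Proposition \ref{prop: special cases}: $\cM(\Gamma,\Gamma_1,\cA)$ contains every $\Gamma_1$-lifted-graphic matroid and, for each $A\in\cA$, every $A$-frame matroid. In both cases we view a gain graph over the subgroup as a $\Gamma$-gain graph with the same gain values. We then compare rank functions using Proposition \ref{prop: rank function}, or equivalently compare the quotient frame matroid and the linear class with those used in Theorem \ref{thm: brylawski}.

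\emph{The $\Gamma_1$ case.} Let $(G,\psi)$ be a $\Gamma_1$-gain graph, regarded as a $\Gamma$-gain graph.
\begin{itemize}
\item Every gain lies in $\Gamma_1$, so $E(G)$ is $(\psi/\Gamma_1)$-normalized.
\item Hence every cycle is $(\psi/\Gamma_1)$-balanced, and $F(G,\psi/\Gamma_1)=M(G)$.
\item The circuits of $M(G)$ are cycles, so $\cC(\Gamma,\Gamma_1,\cA,G,\psi)$ consists exactly of the $\psi$-balanced cycles, $\cB_\psi$.
\item A cycle is $\psi$-balanced as a $\Gamma_1$-gain graph exactly when it is as a $\Gamma$-gain graph, since $\Gamma_1\le\Gamma$.
\end{itemize}
So Definition \ref{def: the matroid} applies Theorem \ref{thm: brylawski} to $M(G)$ with the linear class $\cB_\psi$. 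This is precisely the definition of $L(G,\psi)$, so $M(\Gamma,\Gamma_1,\cA,G,\psi)=L(G,\psi)$.

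\emph{The $A$ case.} Fix $A\in\cA$ and let $(G,\psi)$ be an $A$-gain graph, regarded as a $\Gamma$-gain graph.
\begin{itemize}
\item Since $A\cap\Gamma_1=\{\ep\}$, the natural homomorphism $\Gamma\to\Gamma/\Gamma_1$ is injective on $A$.
\item Consequently, for any walk $W$, $(\psi/\Gamma_1)(W)$ is trivial exactly when $\psi(W)=\ep$.
\item So the $(\psi/\Gamma_1)$-balanced cycles are exactly the $\psi$-balanced cycles, and $F(G,\psi/\Gamma_1)=F(G,\psi)$.
\end{itemize}
Next we show that every circuit of this frame matroid lies in $\cC(\Gamma,\Gamma_1,\cA,G,\psi)$.
\begin{itemize}
\item A balanced-cycle circuit is $\psi$-balanced, so it is in $\cC$ by definition.
\item For a handcuff or theta circuit $C$, take the trivial switching function. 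Then $\psi(C)\subseteq A$ with $A\in\cA$, so $C\in\cC$.
\end{itemize}
By Theorem \ref{thm: brylawski} with $\cC$ equal to all circuits, $r_{M'}=r_M$ for every set, so the lift is trivial. Hence $M(\Gamma,\Gamma_1,\cA,G,\psi)=F(G,\psi)$; equivalently, $l(X)=0$ for all $X$ in Proposition \ref{prop: rank function}.

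The only point needing care is the $A$ case, where we must confirm that $F(G,\psi/\Gamma_1)$ equals the $A$-frame matroid. This reduces to the injectivity of $A\to\Gamma/\Gamma_1$, which follows from the partition property $A\cap\Gamma_1=\{\ep\}$. Everything else is unwinding definitions.
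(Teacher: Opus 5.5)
Your proof is correct and follows the same approach as the paper, which simply notes that one can take a $\Gamma$-gain function whose image lies in $\Gamma_1$ or in some $A \in \cA$. Your argument supplies the details: in the first case $F(G,\psi/\Gamma_1)=M(G)$ with linear class $\cB_\psi$, and in the second $A\cap\Gamma_1=\{\ep\}$ makes $\Gamma\to\Gamma/\Gamma_1$ injective on $A$, so $F(G,\psi/\Gamma_1)=F(G,\psi)$ and every circuit lies in $\cC$.
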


From Proposition \ref{prop: rank function} we can also deduce the sets of bases, circuits, hyperplanes, independent sets, flats, etc., of a matroid in $\cM(\Gamma, \Gamma_1, \cA)$.
We will only explicitly state the sets of bases and circuits as they are most likely to be useful for future work.

\begin{proposition} \label{prop: bases and circuits}
Let $(G, \psi)$ be a $\Gamma$-gain graph, let $M = M(\Gamma, \Gamma_1, \cA, G, \psi)$, and let $N$ be the frame matroid of $(G, \psi/\Gamma_1)$.
\begin{enumerate}[label=$(\alph*)$]
    \item $B \subseteq E(G)$ is a basis of $M$ if and only if $B$ spans $N$, contains a unique circuit of $N$, and this circuit is not in $\cC(\Gamma, \Gamma_1, \cA, G, \psi)$.

    \item $C \subseteq E(G)$ is a circuit of $M$ if and only if $C \in \cC(\Gamma, \Gamma_1, \cA, G, \psi)$ or $C$ contains no sets in $\cC$ and is minimal with the property that $|C| - r_N(C) = 2$.
\end{enumerate}
\end{proposition}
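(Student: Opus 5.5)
Both parts reduce to the rank formula of Proposition \ref{prop: rank function}, i.e.\ to Theorem \ref{thm: brylawski} applied to the linear class $\cC = \cC(\Gamma, \Gamma_1, \cA, G, \psi)$ of $N = F(G,\psi/\Gamma_1)$. I would work with the abstract form $r_M(X) = r_N(X) + l(X)$, where $l(X) = 0$ exactly when every circuit of $N|X$ lies in $\cC$. Two facts will be used throughout. First, $r(M) = r(N) + 1$ if some circuit of $N$ is outside $\cC$, and $r(M) = r(N)$ otherwise. Second, $l$ is monotone under inclusion.

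For (a), I would start from the condition that a basis $B$ of $M$ satisfies $|B| = r_M(B) = r_M(E)$. Since $r_M(B) = r_N(B) + l(B) \le r_N(B)+1$ and $|B| \ge r_N(B)$, there are two cases. In the first, $l(B) = 1$ and $|B| = r_N(B)+1$. Then $B$ has nullity one in $N$, so it contains a unique circuit of $N$. Because $l(B) = 1$, that circuit is not in $\cC$. Finally $r_N(B) = r_M(E) - 1 \ge r(N) - 1 + l(E) - \dots$; more simply, $l(E) \ge l(B) = 1$ gives $r_M(E) = r(N)+1$, so $r_N(B) = r(N)$ and $B$ spans $N$. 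In the second case, $l(B) = 0$, $B$ is independent in $N$, and $r_M(E) = r_N(B) \le r(N)$, which forces $l(E) = 0$. That is, every circuit of $N$ is in $\cC$, and then $M = N$. The statement of (a) implicitly assumes $M \ne N$ (when $M = N$ the bases are just the bases of $N$), so I would note this degenerate case explicitly. The converse direction is a direct rank computation: if $B$ spans $N$ and contains a unique circuit $C \notin \cC$, then $r_M(B) = r(N)+1 = |B|$, and this equals $r_M(E)$ since $l(E) = 1$.

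For (b), I would use the characterization that $C$ is a circuit of $M$ if and only if $r_M(C) = |C|-1$ and $r_M(C-e) = |C-e|$ for all $e \in C$. Since $l \in \{0,1\}$, a circuit $C$ of $M$ has $|C| - r_N(C) \in \{1,2\}$.

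In the first subcase, $|C| - r_N(C) = 1$ and $l(C) = 0$. Then $C$ contains a unique $N$-circuit $D$, and $D \in \cC$. The set $D$ is dependent in $M$, so minimality of $C$ gives $C = D \in \cC$.

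In the second subcase, $|C| - r_N(C) = 2$ and $l(C) = 1$. Any $D \subseteq C$ with $D \in \cC$ is dependent in $M$, so $D = C$. But then $C$ would be a single $N$-circuit and have nullity one, a contradiction; hence $C$ contains no set in $\cC$. Minimality of nullity two follows because a proper subset $C'$ of $C$ with $|C'| - r_N(C') = 2$ would have $r_M(C') \le r_N(C') + 1 = |C'| - 1$, making $C'$ dependent in $M$.

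For the converse, a set in $\cC$ has $M$-nullity one and each of its proper subsets is $N$-independent, so it is an $M$-circuit. If instead $C$ is minimal with $N$-nullity two and contains no member of $\cC$, then some $N$-circuit inside $C$ lies outside $\cC$, so $l(C) = 1$ and $r_M(C) = |C|-1$. Each proper subset $C'$ has $N$-nullity at most one; if it contains an $N$-circuit, that circuit is unique and not in $\cC$, so $r_M(C') = |C'|$.

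The only real subtlety I expect is the degenerate case $l(E) = 0$ in (a). I would handle it by reading (a) under the assumption $M \ne N$, and otherwise everything is bookkeeping with Theorem \ref{thm: brylawski}.
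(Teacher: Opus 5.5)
Your proposal is correct and follows the paper's route. The paper omits the proof and says only that it follows from the rank formula of Proposition~\ref{prop: rank function} (that is, Theorem~\ref{thm: brylawski} with $r_M = r_N + l$), and your case analysis is exactly that deduction. Your caveat about (a) is justified: if every circuit of $N$ lies in $\cC$ (e.g.\ $\psi$ trivial), then $M = N$ and no basis of $M$ contains an $N$-circuit, so (a) needs the hypothesis $r(M) = r(N)+1$, while your argument for (b) needs no such assumption.
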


\subsection{Minors} \label{sec: minors}

We will next show that $\cM(\Gamma, \Gamma_1, \cA)$ is closed under the minor operations of deletion and contraction when $\Gamma/\Gamma_1$ is trivial or isomorphic to a group in $\cA$.
We will consider four cases, and we will only need this assumption about $\Gamma/\Gamma_1$ in one case (Lemma \ref{lem: loop contraction easy case}).
Throughout Section \ref{sec: minors} we will fix a $\Gamma$-gain graph $(G, \psi)$, the underlying frame matroid $N = F(G, \psi/\Gamma_1)$, the linear class $\cC = \cC(\Gamma, \Gamma_1, \cA, G, \psi)$ of circuits of $N$, the matroid $M = M(\Gamma, \Gamma_1, \cA, G, \psi)$, and an element $e \in E(G)$.
For a set $X \subseteq E(G)$ we will write $\psi|_{X}$ for the restriction of $\psi$ to the set of orientations of edges in $X$.

We omit the straightforward proof of the following lemma, which shows that $\cM(\Gamma, \Gamma_1, \cA)$ is closed under deletion.

\begin{lemma} \label{lem: deletion}
$M \del e \cong M(\Gamma, \Gamma_1, \cA, G\del e, \psi|_{E(G) - e})$.
\end{lemma}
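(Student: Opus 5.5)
We will compare rank functions: both $M\del e$ and $M' := M(\Gamma, \Gamma_1, \cA, G\del e, \psi|_{E(G)-e})$ are matroids on $E(G)-e$, so it suffices to show $r_{M\del e}(X) = r_{M'}(X)$ for every $X \subseteq E(G) - e$. By Proposition \ref{prop: rank function}, each rank is $|V(G[X])| - b(X) + l(X)$. Here $G[X]$ is the same graph whether it is computed in $G$ or in $G \del e$, since $X$ avoids $e$. Moreover, $(\psi|_{E(G)-e})/\Gamma_1 = (\psi/\Gamma_1)|_{E(G)-e}$. Hence the number $b(X)$ of $(\psi/\Gamma_1)$-balanced components is the same in both settings.

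It remains to compare the $l(X)$ terms. First, the frame matroid behaves well under deletion: $F(G\del e, (\psi/\Gamma_1)|_{E(G)-e}) = F(G, \psi/\Gamma_1)\del e$. This holds because balance of a cycle depends only on the gains of its own edges, and the circuits of a frame matroid are described by subgraphs avoiding $e$. So $N|X$ is the same matroid in both settings.

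Next we check that the linear classes agree on sets avoiding $e$, i.e.\ $\cC(\Gamma,\Gamma_1,\cA,G\del e,\psi|_{E(G)-e}) = \{C \in \cC : e \notin C\}$.
\begin{itemize}
\item For a cycle, being $\psi$-balanced depends only on the gains of its edges.
\item For a handcuff or theta graph $C$, the condition in Definition \ref{def: the linear class} asks for a switching function $\eta \colon V(G) \to \Gamma$ with $\psi^{\eta}(C) \subseteq A$. This depends only on the values of $\eta$ on $V(G[C])$, and $V(G \del e) = V(G)$, so the condition is identical in both graphs.
\end{itemize}
Therefore every circuit of $N|X$ is in one class if and only if it is in the other, so $l(X)$ agrees, and the ranks coincide.

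The only point needing care is the switching step. Switching functions on $G\del e$ and on $G$ have the same domain, and the gains of $C$ under switching depend only on vertices of $C$. This is immediate, so I expect no real obstacle; the proof is a direct unwinding of definitions. In fact the two matroids are equal, not merely isomorphic.
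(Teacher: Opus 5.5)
Your proposal is correct, and it is essentially the argument the paper has in mind: the paper omits this proof as straightforward. Comparing the formula of Proposition \ref{prop: rank function} term by term, with $G[X]$, $b(X)$, $N|X$ and membership in the linear class all unchanged by deleting $e$ (and switching functions having the same domain $V(G\del e)=V(G)$), is exactly the routine unwinding intended, and it mirrors the rank-function comparisons the paper uses for the contraction lemmas.
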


We next consider the contraction of $e$.
The following lemma considers the case in which $e$ is a non-loop of $G$.
By switching we may assume that $e$ is $\psi$-normalized.

\begin{lemma} \label{lem: contraction of a non-loop}
If $e$ is a $\psi$-normalized non-loop of $G$, then $M / e \cong M(\Gamma, \Gamma_1, \cA, G/ e, \psi|_{E(G) - e})$.
\end{lemma}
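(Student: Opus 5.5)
We compare rank functions using Proposition \ref{prop: rank function}. Write $(G', \psi') = (G/e, \psi|_{E(G)-e})$, $N' = F(G', \psi'/\Gamma_1)$, $\cC' = \cC(\Gamma, \Gamma_1, \cA, G', \psi')$ and $M' = M(\Gamma,\Gamma_1,\cA,G',\psi')$. Because $e$ is $\psi$-normalized, contracting $e$ (identifying its ends $u,v$ into one vertex) does not change the gain of any closed walk: a walk in $G$ maps to a walk in $G'$ with the same gain, and conversely a walk in $G'$ lifts to one in $G$ by inserting $e$ where needed, which contributes $\ep$. First, it is standard (Zaslavsky) that $N/e = N'$ for a normalized non-loop $e$, since $e$ is $(\psi/\Gamma_1)$-normalized. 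Moreover, $e$ is not a loop of $M$ (it is not a loop of $N$ and $r_M \ge r_N$), so $r_{M/e}(X) = r_M(X \cup e) - 1$. Thus it suffices to show, for every $X \subseteq E(G) - e$, that $l(X \cup e)$ computed in $(G,\psi)$ equals $l'(X)$ computed in $(G',\psi')$. Then $r_{M/e}(X) = r_N(X\cup e) - 1 + l(X\cup e) = r_{N'}(X) + l'(X) = r_{M'}(X)$.

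Hence the key step is: every circuit of $N|(X\cup e)$ lies in $\cC$ if and only if every circuit of $N'|X$ lies in $\cC'$. I would work with the characterization via the lift matroid. By Theorem \ref{thm: brylawski}, $l(Y)=0$ exactly when $r_{M}(Y)=r_N(Y)$, so it is enough to prove the circuit-level correspondence. Each circuit $C'$ of $N'$ contained in $X$ is either $C$ or $C-e$ for a circuit $C$ of $N$ contained in $X \cup e$, namely the minimal set with $C - e = C'$ or $C = C'$. Conversely, each circuit of $N$ in $X\cup e$ not equal to $\{e\}$ contracts to a union of circuits of $N'$ (as $N/e=N'$). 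I would check membership in $\cC$ versus $\cC'$ type by type. A cycle $C'$ of $G'$ is $\psi'$-balanced iff its preimage cycle ($C'$ or $C' \cup e$) is $\psi$-balanced, by walk-gain preservation. For handcuffs and thetas, I would use Lemma \ref{lem: equivalences for the linear class}$(iii)$/$(iv)$. A cyclic covering pair of walks in $G$ for $C$ maps to one in $G'$ for the image, and conversely, with identical gains, so the condition ``both gains lie in a common $A\in\cA$'' transfers.

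The main obstacle is that contraction changes the combinatorial type of a circuit. For example, a loose handcuff whose connecting path is $e$ becomes a tight handcuff, and a cycle through $e$ of length two can become a loop. Also, a single circuit of $N$ containing $e$ might contract to a dependent set that is not itself a circuit of $N'$. To avoid a messy case analysis, I would instead compare the sets directly: ``all circuits of $N|Y$ are in the linear class'' is equivalent to every component-level cycle-pair condition. Concretely, after normalizing a spanning forest of $G[X\cup e]$ containing $e$ (Lemma \ref{lem: normalize a forest}), I would use the characterization $(ii)$. Then $l(X\cup e) = 0$ iff each component of $G[X \cup e]$ is either $\psi$-balanced, or is $(\psi/\Gamma_1)$-unbalanced with all non-tree gains in a single $A\in\cA$. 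In addition, there must be at most one unbalanced component, or all of the unbalanced components must share a common $A$ under a suitable normalization. This characterization is read off from the same forest in $G'$ with identical gains, giving $l(X\cup e)=l'(X)$.
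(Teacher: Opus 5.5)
Your reduction is correct and tidier than the paper's. Since $e$ is not a loop of $N$ and $N/e=N'$, Theorem~\ref{thm: brylawski} gives $r_{M/e}(X)=r_N(X\cup e)-1+l(X\cup e)$ and $r_{M'}(X)=r_{N/e}(X)+l'(X)$. So everything comes down to $l(X\cup e)=l'(X)$ for all $X\subseteq E(G)-e$, without the paper's case analysis on $|V|-b+l$. But that equality is the real content of the lemma, and you do not establish it.

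In your circuit-by-circuit sketch, the direction $l(X\cup e)=1\Rightarrow l'(X)=1$ misses a circuit $C\notin\cC$ of $N|(X\cup e)$ with $e\notin C$ but $e\in\cl_N(C)$. Such a $C$ is not a circuit of $N/e$, and knowing it is a union of circuits of $N/e$ says nothing about whether one of them lies outside $\cC'$. The missing idea, which the paper uses, is linearity of $\cC$:
\begin{itemize}
\item $C\cup e$ has nullity $2$ in $N$, so it contains circuits $C_1\ne C_2$ through $e$ forming a modular pair with $C\subseteq C_1\cup C_2$;
\item since $C\notin\cC$, some $C_i\notin\cC$;
\item then $C_i-e$ is a circuit of $N/e$ outside $\cC'$ by your type-by-type comparison.
\end{itemize}
Your stated worry is also misplaced: a circuit of $N$ containing the non-loop $e$ always contracts to a circuit of $N/e$. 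The awkward circuits are those that avoid $e$ but span it.

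The component-level characterization you switch to is asserted without proof and is wrong as stated. Every circuit of $N$ induces a connected subgraph, so $l(Y)=0$ exactly when the condition holds inside each component of $G[Y]$; there is no cross-component requirement. Your extra clause breaks the ``iff'' whenever two components carry non-conjugate members of $\cA$. This happens for $\Gamma_1=\bZ$ in Example~\ref{ex: inverted infinite abelian group}: if $G[Y]$ consists of two loops at distinct vertices with gains $(0,-1)$ and $(1,-1)$, then $N|Y$ has no circuits, so $l(Y)=0$, yet no switching puts both components into a common $A$.

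With that clause deleted, the per-component statement is true and would give a legitimate alternative to the paper's circuit chasing, but it needs proof. Work relative to a $\psi$-normalized spanning tree.
\begin{itemize}
\item A non-tree edge with gain in $\Gamma_1-\{\ep\}$ yields a $(\psi/\Gamma_1)$-balanced cycle that is not $\psi$-balanced.
\item For non-tree edges $f,g$ with gains outside $\Gamma_1$, their fundamental cycles (joined by a tree path if disjoint) form a handcuff or theta. Either it is a circuit of $N$, and Lemma~\ref{lem: equivalences for the linear class}$(ii)$ puts $\psi(f)\cup\psi(g)$ in one $A$. Or it is a theta whose third cycle is $(\psi/\Gamma_1)$-balanced, hence $\psi$-balanced, which forces $\psi(f)=\psi(g)$.
\end{itemize}
The partition property then places all gains of the component in a single $A$. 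Only at that point does ``reading it off the same forest in $G'$'' go through.
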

\begin{proof}
Let $\psi' = \psi|_{E(G) - e}$, let $\cC' = \cC(\Gamma, \Gamma_1, \cA, G/e, \psi')$, and let $M' = M(\Gamma, \Gamma_1, \cA, G/e, \psi')$.
It is well-known that the underlying frame matroid $F(G/e, \psi'/\Gamma_1)$ is isomorphic to $N/e$ (see \cite[Proposition 2.4]{Funk-Pivotto-Slilaty-2022}, for example), so $\cC'$ is a linear class of circuits of $N/e$.
For a set $X \subseteq E(G) - e$, let $b'(X)$ be the number of $(\psi'/\Gamma_1)$-balanced components of $G/e[X]$, and let $l'(X) = 0$ if every circuit of $(N/e)|X$ is in $\cC'$ and $l'(X) = 1$ otherwise.
By Proposition \ref{prop: rank function} applied to $(G/e, \psi')$, every set $X \subseteq E(G) - e$ satisfies
$$r_{M'}(X) = |V(G/e[X])| - b'(X) + l'(X).$$
We will show that every set $X \subseteq E(G) - e$ satisfies $r_{M/e}(X) = r_{M'}(X)$.
If $e$ has at most one end in $V(G[X])$, then the gain graphs $(G[X], \psi|_X)$ and $(G/e[X], \psi'|_X)$ are equal and it follows that $r_{M'}(X) = r_M(X)$.
From Proposition \ref{prop: rank function} we see that $r_M(X \cup e) = r_M(X) + 1$, so $r_{M/e}(X) = r_M(X)$ and therefore $r_{M'}(X) = r_{M/e}(X)$, as desired.
So we may assume that $e$ has both ends in $V(G[X])$, which implies that $|V(G/e[X])| = |V(G[X])| - 1$.
We first prove two claims to compare $G/e[X]$ with $G[X \cup e]$.

\begin{claim} \label{claim: b-value comparison in non-loop case}
$b'(X) = b(X \cup e)$.
\end{claim}
\begin{proof}
By Proposition \ref{prop: rank of X in frame matroid} we see that $r_N(X \cup e) = |V(G[X \cup e])| - b(X \cup e)$, and since $N/e \cong F(G', \psi'/\Gamma_1)$ we see that $r_{N/e}(X) = |V(G'[X])| - b'(X)$.
Since $r_{N/e}(X) = r_N(X \cup e) - 1$ and $|V(G'[X])| = |V(G[X \cup e])| - 1$ we see that $b'(X) = b(X \cup e)$.
\end{proof}

\begin{claim} \label{claim: l-value comparison in non-loop case}
$l'(X) = l(X \cup e)$.
\end{claim}
\begin{proof}
First suppose that $l'(X) = 1$, and let $C'$ be a circuit of $(N/e)|X$ that is not in $\cC'$.
Then $C'$ or $C' \cup e$ is a circuit of $N$, and since $e$ is $\psi$-normalized it follows that this circuit of $N$ is not in $\cC$, so $l(X \cup e) = 1$.
Conversely, suppose that $l(X \cup e) = 1$.
Let $C$ be a circuit of $N|(X \cup e)$ that is not in $\cC$.
We consider three cases: $e \in C$, $e \notin \cl_N(C)$, or $e \in \cl_N(C) - C$.
If $e \in C$, then $C - e$ is a circuit of $N/e$, and since $e$ is $\psi$-normalized it follows that $C - e$ is not in $\cC'$, so $l'(X) = 1$.   
If $e \notin \cl_N(C)$, then $C$ is a circuit of $N/e$, and $C \notin \cC'$ because $e$ is $\psi$-normalized.
So $e \in \cl_N(C) - C$.
Since $e$ is not a loop of $N$ there are circuits $C_1$ and $C_2$ of $N$ contained in $C \cup e$ that contain $e$.
Since $\cC$ is a linear class of circuits of $N$ and $C \notin \cC$, there is some $i \in \{1,2\}$ so that $C_i \notin \cC$.
Then $e \in C_i$, and by previous arguments $C_i - e$ is a circuit of $N/e$ that is not in $\cC'$, so $l'(X) = 1$.
\end{proof}

Using Claims \ref{claim: b-value comparison in non-loop case} and \ref{claim: l-value comparison in non-loop case} we have
\begin{align}
    r_{M'}(X) &= |V(G/e[X])| - b'(X) + l'(X) \\
    &= |V(G[X])| - 1 - b(X \cup e) + l(X \cup e).
\end{align}
If $r_{M/e}(X) = r_M(X)$, then $r_M(X \cup e) = r_M(X) + 1$, and it follows from Proposition \ref{prop: rank function} that exactly one of the following holds: $l(X \cup e) = l(X) + 1$ or $b(X \cup e) = b(X) - 1$.
In either case we see from (2) that $r_{M'}(X) = r_M(X)$.
If $r_{M/e}(X) = r_M(X) - 1$, then $r_M(X \cup e) = r_M(X)$. 
It follows from Proposition \ref{prop: rank function} that $b(X \cup e) = b(X)$ and $l(X \cup e) = l(X)$, and we see from (2) that $r_{M'}(X) = r_M(X) - 1$, as desired.
\end{proof}

We next consider the case in which $e$ is a loop at vertex $v$ so that $\psi(e, v, v) \notin \Gamma_1 - \{\ep\}$.
This is the most complex case, in part because contraction of unbalanced loops for frame matroids is complex.
If $\psi(e,v,v) = \ep$ then $e$ is a loop of $M$ and therefore $M/e = M\del e$, so we may assume that $\psi(e,v,v) \ne \ep$.
Then $\psi(e, v, v)$ is in a unique set $A \in \cA$.
Let $G'$ be the graph with vertex set $V(G)$ and edge set $E(G) - e$ so that $G'$ is obtained from $G$ by replacing each non-loop edge $f$ with ends $v$ and $w$ with a loop at $w$.
Then let $\psi'$ be the following $\Gamma$-gain function for $G'$:
\begin{itemize}
    \item If $f$ is an edge of $G$ with ends $u$ and $w$ with $v \notin \{u,w\}$, define $\psi'(f, u, w) = \psi(f, u, w)$.

    \item If $f$ is a non-loop of $G$ with ends $v$ and $w$, define $\psi'(f, w, w) = \psi(f, v, w)^{-1} \circ \psi(e, v, v) \circ \psi(f, v, w)$.
    
    \item If $f$ is a loop of $G$ at $v$ with $\psi(f, v, v) \in A$, define $\psi'(f, v, v) = \ep$.

    \item If $f$ is a loop of $G$ at $v$ with $\psi(f, v, v) \notin A$, define $\psi'(f, v, v)$ to be any element in $\Gamma_1 - \{\ep\}$.
\end{itemize}
Recall from Lemma \ref{lem: placement of loops}$(a)$ that $M(\Gamma, \Gamma_1, \cA, G', \psi')$ is independent of the choice of element in $\Gamma_1 - \{\ep\}$, so $\psi'$ is well-defined.
We next show that the matroid arising from $(G', \psi')$ is isomorphic to the contraction of $e$ from $M$.

\begin{lemma} \label{lem: contraction of doubly-unbalanced loop}
If $e$ is a loop of $G$ with $\psi(e) \cap \Gamma_1 = \varnothing$, then $M/e \cong M(\Gamma, \Gamma_1, \cA, G', \psi')$.
\end{lemma}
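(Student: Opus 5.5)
We compare rank functions directly, imitating the proof of Lemma \ref{lem: contraction of a non-loop}. Write $\alpha = \psi(e,v,v)$, so $\alpha \in A - \{\ep\}$ and $\alpha \notin \Gamma_1$. Then $e$ is a $(\psi/\Gamma_1)$-unbalanced loop, so $r_N(e) = 1$ and $r_M(e) = 1$. Let $M' = M(\Gamma, \Gamma_1, \cA, G', \psi')$, with underlying frame matroid $N' = F(G', \psi'/\Gamma_1)$ and linear class $\cC'$. We aim to show $r_{M'}(X) = r_M(X \cup e) - 1$ for every $X \subseteq E(G) - e$.

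The first step is the frame part. The construction of $(G', \psi')$ is the standard contraction of an unbalanced loop in a gain graph, with one twist: loops at $v$ whose gains lie outside $A$ become $\Gamma_1$-loops rather than generic unbalanced loops. Under $\psi/\Gamma_1$, these loops have gains that are nonidentity in $\Gamma/\Gamma_1$, which stay unbalanced. So $N' \cong N/e$, by the known contraction result for frame matroids of gain graphs (loops at $v$ with gain in $A$ map to identity loops, and those outside $A\Gamma_1$ map to unbalanced ones). We should double check the case of a loop at $v$ whose gain $\beta$ lies in $\Gamma_1$. There $\beta\alpha^{-1}$ is nontrivial modulo $\Gamma_1$, so in $N/e$ the loop is nonloop-parallel-like, and assigning it a nonidentity $\Gamma_1$ gain in $(G',\psi')$ gives a $(\psi'/\Gamma_1)$-balanced loop, which is a loop of $N'$. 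This mismatch has to be absorbed into the lift. We therefore expect to compare $r_M(X\cup e)$ with the formula of Proposition \ref{prop: rank function} for $M'$ directly, rather than route everything through $N/e$. In $M'$, a $\Gamma_1$-loop raises $l$ instead of lowering $b$, and it is the combined quantity $|V|-b+l$ that must match.

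The second step is to compute $r_M(X\cup e)$ via Proposition \ref{prop: rank function}. Let $H$ be the component of $G[X\cup e]$ containing $v$. Since $H$ contains $e$, it is $(\psi/\Gamma_1)$-unbalanced, so $b(X\cup e)$ counts only the other components, and those are unchanged in $G'$. Adding $e$ to $H$ can only contribute to $l$ through circuits of $N$ meeting $e$.

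The key claim, and the main obstacle, is $l(X\cup e) = l'(X)$ after the appropriate correction for $H$. We normalize a spanning tree of $H$ at $v$ with $\eta(v)=\ep$ by Lemma \ref{lem: normalize a forest}. Then the new loop gains $\psi'(f,w,w) = \psi(f,v,w)^{-1}\alpha\psi(f,v,w)$ are exactly the values of the closed walks through $e$. The circuits of $N$ in $X\cup e$ that contain $e$ are handcuffs and thetas with $e$ as a cycle. By Lemma \ref{lem: equivalences for the linear class}$(iv)$, such a circuit lies in $\cC$ exactly when its covering walks through $e$ land, together with the other walk, in a common conjugate of $A$. Malnormality pins that conjugate down uniquely from $\alpha$. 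This should translate into the statement that the corresponding circuit of $N'$, for example a pair of loops or a loop plus a cycle, lies in $\cC'$.

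We will then check the case analysis: whether $H-e$ has gains entirely within a conjugate of $A$, whether it meets a $\Gamma_1$-loop, and whether it contains an unbalanced cycle. In each case we verify that $|V(H)|-0+l$ on the $M$ side equals $|V(H')|-b'(H')+l'$ on the $M'$ side. We handle circuits not meeting $e$ as in Claim \ref{claim: l-value comparison in non-loop case}, using that $\cC$ is a linear class to reduce circuits in $\cl_N$ to ones containing $e$. We anticipate that the delicate sub-case is when $H$ has no unbalanced cycle other than $e$: then $H-e$ may become balanced in $G'$, and the $+1$ from $b$ must be traded against $l$. That trade is where the choices of $\ep$ versus a $\Gamma_1$-element for loops at $v$ are justified.
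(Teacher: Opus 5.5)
Your overall plan is the paper's plan: compare $r_{M'}(X)$ with $r_M(X\cup e)-1$ via Proposition~\ref{prop: rank function}, normalize a spanning tree, and use Lemma~\ref{lem: equivalences for the linear class}, malnormality, and linearity of $\cC$. However, the proposal rests on a false diagnosis and leaves the actual content unproved.

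There is no ``mismatch'' at the loops of $v$. A loop $f$ at $v$ with $\psi(f,v,v)\in\Gamma_1-\{\ep\}$ is a $(\psi/\Gamma_1)$-balanced cycle, so it is already a loop of $N$ and stays a loop of $N/e$. A loop at $v$ with gain outside $\Gamma_1$ is parallel to $e$ in $N$, so it also becomes a loop of $N/e$. On the other side, the definition of $\psi'$ gives every loop at $v$ a gain in $\{\ep\}\cup(\Gamma_1-\{\ep\})$, so all of them are loops of $N'$, not ``unbalanced ones'' as you also claim. Each non-loop $vw$ becomes a loop at $w$ with gain conjugate to $\alpha$, which is $(\psi'/\Gamma_1)$-unbalanced because $\Gamma_1$ is normal. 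Hence $N'\cong N/e$ exactly, and $|V(G'[X])|-b'(X)=|V(G[X\cup e])|-b(X\cup e)-1$ with no correction. The extra balanced component formed by $v$ and its loops in $G'[X]$ is precisely the $-1$ from contracting $e$; it is not something to be traded against $l$. What you must prove is $l'(X)=l(X\cup e)$ exactly. For example, take $X=\{f\}$ with $f$ a single $\Gamma_1$-loop at $v$, which is your ``delicate sub-case'': both $l$-values equal $1$, and any ``correction for $H$'' would make the count wrong.

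That identity is the whole lemma, and your sketch only says it ``should translate''. Both directions need real arguments.
\begin{itemize}
\item For $l(X\cup e)=0\Rightarrow l'(X)=0$, take a $\psi$-normalized spanning tree $T$. Show that each fundamental circuit of $T\cup\{e,f\}$ in $N$ is either a $\psi$-balanced cycle or a handcuff of $\cC$ through $e$. This forces $\psi(X\cup e)\subseteq A$, and therefore $\psi'(X)\subseteq A$.
\item For $l(X\cup e)=1\Rightarrow l'(X)=1$, your plan to treat circuits avoiding $e$ ``as in'' the non-loop lemma misses the case $e\notin\cl_N(C)$ with $v\in V(C)$. Then $C$ is a $(\psi/\Gamma_1)$-balanced but $\psi$-unbalanced cycle through $v$. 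In $G'$ it becomes a handcuff whose cycles are two loops, with gains $\alpha$ and $\gamma^{-1}\alpha\gamma$ for some $\gamma\in\Gamma_1-\{\ep\}$. In the non-loop lemma such circuits are literally unchanged. Here, malnormality of $A$ (with $\gamma\notin A$) is what gives $\gamma^{-1}\alpha\gamma\notin A$, so this handcuff is not in $\cC'$.
\item Malnormality is needed in the same way when $e\in C$, $C$ is a tight handcuff, and its other cycle leaves $v$ along a non-loop $f$ with $\psi(f,v,w)\notin A$.
\end{itemize}
Two smaller corrections: circuits of $N$ containing the loop $e$ are handcuffs, never thetas. And it is the partition property, not malnormality, that singles out $A$ from $\alpha$.
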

\begin{proof}
Let $\cC' = \cC(\Gamma, \Gamma_1, \cA, G', \psi')$ and let $M' = M(\Gamma, \Gamma_1, \cA, G', \psi')$.
It is well-known that the underlying frame matroid $F(G', \psi'/\Gamma_1)$ is isomorphic to $N/e$ (see \cite[Proposition 2.4]{Funk-Pivotto-Slilaty-2022}, for example), so $\cC'$ is a linear class of circuits of $N/e$.
For a set $X \subseteq E(G) - e$, let $b'(X)$ be the number of $(\psi'/\Gamma_1)$-balanced components of $G'[X]$, and let $l'(X) = 0$ if every circuit of $(N/e)|X$ is in $\cC'$ and $l'(X) = 1$ otherwise.
By Proposition \ref{prop: rank function}, each set $X \subseteq E(G) - e$ satisfies
$$r_{M'}(X) = |V(G'[X])| - b'(X) + l'(X).$$
We will show that every set $X \subseteq E(G) - e$ satisfies $r_{M/e}(X) = r_{M'}(X)$.
Let $Y \subseteq X$ be the set of loops of $G[X]$ at $v$; this set may be empty.
If $v \notin V(G[X])$, then $G'[X] = G[X]$ and it follows that $r_{M'}(X) = r_M(X)$.
From Proposition \ref{prop: rank function} we see that $r_M(X \cup e) = r_M(X) + 1$, so $r_{M/e}(X) = r_M(X)$ and therefore $r_{M'}(X) = r_{M/e}(X)$, as desired.
So we may assume that $v \in V(G[X])$.
We first prove two claims to compare $G'[X]$ with $G[X \cup e]$.

\begin{claim} \label{claim: b-value comparison in loop case}
$|V(G'[X])| - b'(X) = |V(G[X \cup e])| - b(X \cup e) - 1$.
\end{claim}
\begin{proof}
By Proposition \ref{prop: rank of X in frame matroid} we see that $r_N(X \cup e) = |V(G[X \cup e])| - b(X \cup e)$, and since $N/e \cong F(G', \psi'/\Gamma_1)$ we see that $r_{N/e}(X) = |V(G'[X])| - b'(X)$.
Since $r_{N/e}(X) = r_N(X \cup e) - 1$, the statement holds.
\end{proof}

\begin{claim} \label{claim: l-value comparison in loop case}
$l'(X) = l(X \cup e)$.
\end{claim}
\begin{proof}
First suppose that $l(X \cup e) = 0$.
We will show that $l'(X) = 0$.
If $H$ is a component of $G[X]$ that does not contain $v$, then $H$ is a component of $G'[X]$ and $\psi$ and $\psi'$ agree on $E(H)$, so all circuits of $(N/e)|E(H)$ are in $\cC'$.
Since each circuit of $(N/e)|X$ is contained in a component of $G'[X]$, we may assume that $G[X]$ is connected.
Let $T$ be a spanning tree of $G[X]$.
By switching we may assume that $T$ is $\psi$-normalized.
Recall that $\psi(e, v, v) \in A$.
We claim that $\psi(X \cup e) \subseteq A$.
Let $f \in X - T$.
Then $T \cup \{e,f\}$ contains a unique circuit $C$ of $N$.
If $C$ is a $(\psi/\Gamma_1)$-balanced cycle then $C$ is $\psi$-balanced because $l(X \cup e) = 0$, and therefore $f$ is $\psi$-normalized.
If $C$ is not a $(\psi/\Gamma_1)$-balanced cycle, then $C$ is a handcuff in $\cC$ with a $\psi$-normalized spanning tree $F$ with $C - F = \{e,f\}$.
Since $\psi(e,v,v) \in A$ and $C \in \cC$ it follows that $\psi(f) \subseteq A$.
Therefore $\psi(X \cup e) \subseteq A$.
From the definition of $\psi'$ we see that $\psi'(X) \subseteq A$, so every handcuff or theta circuit of $G'[X]$ is in $\cC'$.
Let $C'$ be a $(\psi'/\Gamma_1)$-balanced cycle of $G'[X]$.
If $C'$ is not a loop then $C'$ is a cycle of $G[X]$ with $v \notin V(C)$ and since $\psi$ and $\psi'$ agree on $E(G - v)$ and $l(X \cup e) = 0$ it follows that $C'$ is $\psi'$-balanced so $C' \in \cC$.
So $C'$ is a loop.
Since $l(X \cup e) = 0$ and $\psi(e, v, v) \in A$, every loop $f$ of $G[X]$ at $v$ satisfies $\psi(f, v, v) \in A$.
From the definition of $\psi'$ we see that $f$ is $\psi'$-normalized.
We have shown that every circuit of $(N/e)|X$ is in $\cC'$, so $l'(X) = 0$.

Conversely, suppose that $l(X \cup e) = 1$.
Let $C$ be a circuit of $N|(X \cup e)$ that is not in $\cC$.
We consider three cases: $e \notin \cl_N(C)$, $e \in C$, or $e \in \cl_N(C) - C$.
First suppose that $e \notin \cl_N(C)$.
Then $C$ is a circuit of $N/e$.
If $v \notin V(G[C])$ then $G'[C] = G[C]$ and so $C \notin \cC'$.
So $v \in V(G[C])$.
Since $e \notin \cl_N(C)$ this implies that $C$ is a $(\psi/\Gamma_1)$-balanced cycle.
If $C$ is a loop $f$ at $v$ in $G$, then $C$ is loop $f$ at $v$ in $G'$.
Since $\psi(f, v, v) \ne \ep$ it follows that $\psi'(f, v, v) \ne \ep$ and therefore $C \notin \cC'$, as desired.
So we may assume that $C$ is not a loop at $v$, and is therefore not a loop.
Let $f_1,f_2 \in C$ be the two edges incident with $v$, and for $i = 1,2$ let $w_i$ be the end of $f_i$ other than $v$.
By switching we may assume that $C - f_1$ is $\psi$-normalized, which implies that $\psi(f_1, v, w_1) \in \Gamma_1 - \{\ep\}$.
Then $G'[C]$ is a $\psi$-normalized handcuff with loops $f_1$ and $f_2$ with $\psi'(f_1, w_1, w_1) = \psi(f_1, v, w_1)^{-1} \circ \psi(e, v, v) \circ \psi(f_1, v, w_1)$ and $\psi'(f_2, w_2, w_2) = \psi(e, v, v)$.
Since $A$ is malnormal and $\psi(f_1, v, w_1) \notin A$ we see that $\psi(f_1, v, w_1)^{-1} \circ \psi(e, v, v) \circ \psi(f_1, v, w_1)$ is not in $A$.
Therefore $C' \notin \cC'$ because $\psi(e, v, v) \in A$, and so $l'(X) = 1$, as desired.

Next suppose that $e \in C$.
Then $C - e$ is a circuit of $N/e$, and we will show that $C - e \notin \cC'$, which will imply that $l'(X) = 1$.
Let $T$ be a spanning tree of $G[C]$ and let $f \in C - (T \cup e)$.
By switching we may assume that $T$ is $\psi$-normalized.
Since $C$ is not in $\cC$ we see that $\psi(f)$ is not contained in $A$.
First suppose that $C$ is a loose handcuff, and let $g$ be the non-loop edge incident with $v$.
Then $G'[C - e]$ is a $\psi'$-normalized handcuff, $\psi'(g) = \psi(e)$, and $\psi'(f) = \psi(f)$.
Since $\psi(f)$ is not contained in $A$ it follows that $C - e$ is not in $\cC'$, so $l'(X) = 1$.
Next suppose that $C$ is a tight handcuff.
Let $D$ be the cycle of $G[C]$ other than $e$.
If $D$ is a loop, then in $G'$ we see that $D$ is a $(\psi'/\Gamma_1)$-balanced cycle that is not $\psi'$-balanced, so $l'(X) = 1$.
So we may assume that $D$ is not a loop.
We may assume that $f$ has ends $v$ and $w$ with $v \ne w$.
Let $g$ be the other edge of $G[D]$ with $v$ as an end.
Then $G'[C - e]$ is a $\psi'$-normalized handcuff,
$\psi'(g) = \psi(e)$, and $\psi'(f)$ contains $\psi(f, v, w)^{-1} \circ \psi(e, v, v) \circ \psi(f, v, w)$ and its inverse.
Since $\psi(f, v, w) \notin A$ and $A$ is malnormal it follows that $C - e$ is not in $\cC'$, so $l'(X) = 1$.

Finally, suppose that $e \in \cl_N(C) - C$.
Since $e$ is not a loop of $N$ there are circuits $C_1$ and $C_2$ of $N$ contained in $C \cup e$ that contain $e$.
Since $\cC$ is a linear class of circuits of $N$ and $C \notin \cC$, there is some $i \in \{1,2\}$ so that $C_i \notin \cC$.
Then $e \in C_i$, and by previous arguments $C_i - e$ is a circuit of $N/e$ that is not in $\cC'$, so $l'(X) = 1$.
\end{proof}

By Claims \ref{claim: b-value comparison in loop case} and \ref{claim: l-value comparison in loop case} we have
\begin{align}
    r_{M'}(X) &= |V(G'[X])| - b'(X) + l'(X) \\
    &= |V(G[X \cup e])| - b(X \cup e) - 1 + l'(X) \\
    &= |V(G[X \cup e])| - b(X \cup e) - 1 + l(X \cup e) \\
    &= |V(G[X])| - b(X \cup e) - 1 + l(X \cup e).
\end{align}
If $r_{M/e}(X) = r_M(X)$, then $r_M(X \cup e) = r_M(X) + 1$, and it follows from Proposition \ref{prop: rank function} that exactly one of the following holds: $l(X \cup e) = l(X) + 1$ or $b(G[X \cup e]) = b(G[X]) - 1$.
In either case we see from (6) that $r_{M'}(X) = r_M(X)$.
If $r_{M/e}(X) = r_M(X) - 1$, then $r_M(X \cup e) = r_M(X)$.
It then follows from Proposition \ref{prop: rank function} that $V(G[X \cup e]) = V(G[X])$, while $b(X \cup e) = b(X)$ and $l(X \cup e) = l(X)$, and we see from (6) that $r_{M'}(X) = r_M(X) - 1$, as desired.
\end{proof}

Finally, we will consider the contraction of $e$ from $M$ in the case that $e$ is a loop at vertex $v$ with $\psi(e, v, v) \in \Gamma_1 - \{\ep\}$.
In this case we will need to assume that $\Gamma/\Gamma_1$ is isomorphic to a group in $\cA$.
With this assumption we can simply delete $e$ and replace $\psi$ with $\psi/\Gamma_1$.

\begin{lemma} \label{lem: loop contraction easy case}
If $e$ is a loop of $G$ with $\psi(e) \subseteq \Gamma_1 - \{\ep\}$ and $\Gamma/\Gamma_1$ is trivial or isomorphic to a group in $\cA$, then $M/e \cong M(\Gamma, \Gamma_1, \cA, G \del e, (\psi/\Gamma_1)|_{E(G) - e})$.
\end{lemma}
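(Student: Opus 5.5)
The plan is to compute the rank function of $M/e$ directly from Proposition \ref{prop: rank function}. This should show that $M/e$ is just the frame matroid $N\del e = F(G\del e, \psi/\Gamma_1)$. I would then realize that frame matroid by a $\Gamma$-gain function whose values all lie in a single group of $\cA$, using the hypothesis on $\Gamma/\Gamma_1$. (Strictly, the statement's gain function $(\psi/\Gamma_1)|_{E(G)-e}$ takes values in $\Gamma/\Gamma_1$, so it has to be read through such an identification.)

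First I would analyse the role of $e$. Since $\psi(e,v,v) \in \Gamma_1 - \{\ep\}$, the loop $e$ is $(\psi/\Gamma_1)$-balanced, so $\{e\}$ is a circuit of $N$, i.e.\ $e$ is a loop of $N$. But $e$ is not $\psi$-balanced, so $\{e\} \notin \cC$. Hence for every $X \subseteq E(G)-e$, the set $X \cup e$ contains the circuit $\{e\}$ of $N$ that is not in $\cC$, giving $l(X\cup e)=1$. Because $e$ is a loop of $N$ we also have $r_N(X \cup e) = r_N(X)$. Proposition \ref{prop: rank function}, equivalently Theorem \ref{thm: brylawski}, then gives
$$r_{M/e}(X) = r_M(X\cup e) - r_M(\{e\}) = r_N(X) + 1 - 1 = r_N(X).$$
Therefore $M/e = N\del e = F(G\del e, (\psi/\Gamma_1)|_{E(G)-e})$.

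Next I would treat the two cases of the hypothesis.
\begin{itemize}
\item If $\Gamma/\Gamma_1$ is trivial, then $\Gamma_1=\Gamma$ and every cycle is $(\psi/\Gamma_1)$-balanced, so $N\del e$ is the graphic matroid of $G\del e$. This is the matroid obtained from the identity gain function, which is what $(\psi/\Gamma_1)|_{E(G)-e}$ amounts to here.
\item Otherwise, fix $A_0 \in \cA$ and an isomorphism $\iota\colon \Gamma/\Gamma_1 \to A_0$. Define the $\Gamma$-gain function $\varphi = \iota \circ (\psi/\Gamma_1)|_{E(G)-e}$ on $G \del e$; this is the precise meaning of using ``$\psi/\Gamma_1$'' as a $\Gamma$-gain function. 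Then $\varphi/\Gamma_1 = \pi\circ\iota\circ(\psi/\Gamma_1)$, where $\pi$ is the natural map $\Gamma\to\Gamma/\Gamma_1$. Its restriction $\pi|_{A_0}$ is an injective homomorphism because $A_0 \cap \Gamma_1 = \{\ep\}$. So a closed walk has identity value under $\varphi/\Gamma_1$ iff it does under $\psi/\Gamma_1$. Thus the two gain graphs have the same balanced cycles and the same frame matroid, namely $N \del e$.
\end{itemize}

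Finally, I would check that $M(\Gamma,\Gamma_1,\cA,G\del e,\varphi)$ equals this frame matroid by showing that every circuit of $F(G\del e,\varphi/\Gamma_1)$ lies in $\cC(\Gamma,\Gamma_1,\cA,G\del e,\varphi)$, so that $l \equiv 0$. Every handcuff or theta circuit $C$ has $\varphi(C) \subseteq A_0$, with $\eta$ the identity switching, so $C$ is in the class. Every $(\varphi/\Gamma_1)$-balanced cycle has walk value in $A_0 \cap \Gamma_1 = \{\ep\}$, so it is $\varphi$-balanced. This is essentially Proposition \ref{prop: special cases}. Hence $M(\Gamma,\Gamma_1,\cA,G\del e,\varphi) = F(G\del e,\varphi/\Gamma_1) = N\del e = M/e$.

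The only delicate point is the identification in the second case. One must verify that transporting $\psi/\Gamma_1$ into $A_0$ preserves balance, which rests on injectivity of $\pi|_{A_0}$, and that gains inside a single $A_0$ make every frame circuit belong to the linear class. I do not expect a serious obstacle; the rank computation in the first step is routine.
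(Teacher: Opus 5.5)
Your proposal is correct and follows the paper's proof. Both arguments first show $M/e = N\del e$ and then realize $\psi/\Gamma_1$ as a gain function into a single group of $\cA$ (or the trivial group), so that every frame circuit lies in the linear class. The differences are minor: you verify $r_{M/e}=r_{N\del e}$ directly from Proposition \ref{prop: rank function}, where the paper cites that $M/e$ is an elementary lift of $N/e$ of the same rank; and you spell out, via injectivity of the quotient map on $A_0$, the identification the paper describes as viewing the gain graph as an $A$-gain graph.
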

\begin{proof}
By \cite[pg. 282, Exercise 11]{Oxley2011} we know that $M/e$ is an elementary lift of $N/e$.
Since $e$ is a loop of $N$ but not $M$ we have $r(M/e) = r(N/e)$, which implies that $M/e = N/e$ \cite[Corollary 7.3.4]{Oxley2011}.
And since $e$ is a loop of $N$ we see that $N/e = N\del e$, so $N/e$ is isomorphic to the frame matroid $F(G \del e, (\psi/\Gamma_1)|_{E(G) - e})$.
If $\Gamma/\Gamma_1$ is nontrivial, let $A \in \cA$ so that $A \cong \Gamma/\Gamma_1$.
We see from Definition \ref{def: the matroid} that every circuit of $N \del e$ is in $\cC(\Gamma, \Gamma_1, \cA, G \del e, (\psi/\Gamma_1)|_{E(G) - e})$.
If $\Gamma/\Gamma_1$ is nontrivial, this is because we can view $(G \del e, (\psi/\Gamma_1)|_{E(G) - e})$ as an $A$-gain graph, and if $\Gamma/\Gamma_1$ is trivial, this is because we can view $(G \del e, (\psi/\Gamma_1)|_{E(G) - e})$ as an $\{\ep\}$-gain graph.
Therefore $N \del e \cong M(\Gamma, \Gamma_1, \cA, G \del e, (\psi/\Gamma_1)|_{E(G) - e})$, as desired.
\end{proof}

Combining the previous four lemmas gives the following.

\begin{theorem} \label{thm: minor-closed}
If $\Gamma/\Gamma_1$ is trivial or isomorphic to a subgroup in $\cA$, then $\cM(\Gamma, \Gamma_1, \cA)$ is minor-closed.
In particular, $\cM(\Gamma, \Gamma_1, \cA)$ is minor-closed if $\Gamma$ is finite.
\end{theorem}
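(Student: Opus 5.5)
Since a minor is a sequence of single-element deletions and contractions, and since $\cM(\Gamma, \Gamma_1, \cA)$ is by definition closed under isomorphism, it suffices to show that for every $\Gamma$-gain graph $(G,\psi)$ and every $e \in E(G)$, both $M \del e$ and $M/e$ lie in $\cM(\Gamma, \Gamma_1, \cA)$, where $M = M(\Gamma, \Gamma_1, \cA, G, \psi)$. Deletion is handled by Lemma \ref{lem: deletion}. For contraction, I would split into cases according to the type of $e$ in $(G,\psi)$, and for each case cite the matching lemma.

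If $e$ is a non-loop, I first apply Lemma \ref{lem: normalize a forest} to the one-edge forest $\{e\}$ to obtain a switching function $\eta$ such that $e$ is $\psi^{\eta}$-normalized. Switching preserves $\cC(\Gamma, \Gamma_1, \cA, G, \psi)$, since the definition is switching-invariant and $\psi/\Gamma_1$ switches compatibly. It also preserves $F(G, \psi/\Gamma_1)$, because balanced cycles are preserved. Hence $M$ is unchanged, and Lemma \ref{lem: contraction of a non-loop} gives $M/e \in \cM(\Gamma, \Gamma_1, \cA)$.

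If $e$ is a loop at $v$, there are three cases. If $\psi(e,v,v) = \ep$, then $e$ is a loop of $M$, so $M/e = M\del e$, and we are back in the deletion case. If $\psi(e,v,v) \notin \Gamma_1$, then $\psi(e) \cap \Gamma_1 = \varnothing$ because $\Gamma_1$ is closed under inverses, and Lemma \ref{lem: contraction of doubly-unbalanced loop} applies. If $\psi(e,v,v) \in \Gamma_1 - \{\ep\}$, then $\psi(e) \subseteq \Gamma_1 - \{\ep\}$, and under the hypothesis on $\Gamma/\Gamma_1$ Lemma \ref{lem: loop contraction easy case} applies. One small point needs checking here: the gain function $(\psi/\Gamma_1)|_{E(G)-e}$ in that lemma takes values in $\Gamma/\Gamma_1$, not in $\Gamma$. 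I would read it, as that lemma's proof does, as a $\Gamma$-gain function with values in a group $A \in \cA$ via a fixed isomorphism $\Gamma/\Gamma_1 \cong A$, or with trivial values when $\Gamma/\Gamma_1$ is trivial. The resulting matroid is then genuinely of the form $M(\Gamma, \Gamma_1, \cA, G', \psi')$ for a $\Gamma$-gain graph.

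For the \emph{in particular}, suppose $\Gamma$ is finite. If the Frobenius partition is trivial, then $\cA$ is empty or $\Gamma_1$ is trivial. If $\cA$ is empty, then $\Gamma_1 = \Gamma$ and the quotient is trivial. If $\Gamma_1$ is trivial, then every non-identity element lies in some $A \in \cA$, so the partition consists of a single nontrivial group in $\cA$. That group must be all of $\Gamma \cong \Gamma/\Gamma_1$, and in particular it is a group in $\cA$. If the partition is nontrivial, Proposition \ref{prop: Frobenius properties}$(iv)$ gives $\Gamma = \Gamma_1 \rtimes A$ with $A \in \cA$, so $\Gamma/\Gamma_1 \cong A$. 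I expect no step to be a real obstacle, since the lemmas do the work. The only care needed is the trivial-partition bookkeeping and the reinterpretation of the quotient gain function as a $\Gamma$-gain function.
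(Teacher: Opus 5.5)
Your proposal is correct and is the paper's own argument: the paper obtains the theorem by combining Lemma~\ref{lem: deletion} with the three contraction lemmas (switching first to normalize a non-loop, and noting that an identity-gain loop is a loop of $M$), and gets the finite case from Proposition~\ref{prop: Frobenius properties}$(iv)$; your separate handling of trivial partitions and your reading of $\psi/\Gamma_1$ as an $A$-valued $\Gamma$-gain function only make explicit what the paper leaves implicit. One subtlety is inherited from the paper rather than introduced by you: the definition of $\psi'$ before Lemma~\ref{lem: contraction of doubly-unbalanced loop} sends loops at $v$ whose gains lie outside $A$ to elements of $\Gamma_1 - \{\ep\}$, so that step tacitly needs $\Gamma_1 \ne \{\ep\}$ whenever $\cA$ has at least two groups; Proposition~\ref{prop: Frobenius properties}$(v)$ guarantees this for finite $\Gamma$, but it can fail for infinite $\Gamma$ satisfying the hypothesis (for instance, a free group of infinite rank partitioned into the conjugates of a proper free factor isomorphic to it together with the remaining maximal cyclic subgroups).
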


This was proved by Anderson, Su, and Zaslavsky \cite[Theorem 9.6]{Anderson-Su-Zaslavsky2024} in the special case that $\Gamma = \Gamma_1 \rtimes \{1, -1\}^{\times}$ for an abelian group $\Gamma$ with no elements of order $2$.
Recall from Problem \ref{prob: infinite group partition properties}$(iii)$ that $\Gamma/\Gamma_1$ may always be trivial or isomorphic to a subgroup in $\cA$; this is an open problem.

An \emph{excluded minor} for a minor-closed class $\cM$ of matroids is a matroid that is not in $\cM$ but has every proper minor in $\cM$.
There are several interesting results and conjectures concerning the excluded minors for the classes of frame matroids, lifted-graphic matroids, $\Gamma$-frame matroids, and $\Gamma$-lifted-graphic matroids.
We will briefly survey these results, and then make two conjectures about the excluded minors of $\cM(\Gamma, \Gamma_1, \cA)$.

Chen and Geelen \cite{Chen-Geelen-2018} proved that the classes of frame matroids and lifted-graphic matroids have infinitely many excluded minors.
This has the following consequence.

\begin{proposition}
The class of elementary lifts of frame matroids has infinitely many excluded minors.
\end{proposition}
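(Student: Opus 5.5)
The plan is to derive this from the result of Chen and Geelen \cite{Chen-Geelen-2018} by placing the class $\mathcal{E}$ of elementary lifts of frame matroids between lifted-graphic matroids and all matroids. The argument will show that $\mathcal{E}$ is minor-closed and then produce infinitely many excluded minors for $\mathcal{E}$ from the Chen--Geelen excluded minors.

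\emph{Step 1: $\mathcal{E}$ is minor-closed.} Let $M \in \mathcal{E}$, witnessed by a matroid $K$ with an element $e$ so that $K\del e = M$ and $K/e = N$ with $N$ frame. For disjoint $X, Y \subseteq E(M)$, the matroid $K' = K\del X/Y$ satisfies $K'\del e = M\del X/Y$ and $K'/e = N\del X/Y$. Since frame matroids are minor-closed \cite{Zaslavsky1991}, $N\del X/Y$ is frame, so $M\del X/Y \in \mathcal{E}$. The degenerate cases, where $e$ becomes a loop or coloop of $K'$, are harmless because every matroid is an elementary lift of itself (add $e$ as a loop). The same observation shows that every frame matroid lies in $\mathcal{E}$. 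Every lifted-graphic matroid also lies in $\mathcal{E}$, since $L(G,\cB)$ is an elementary lift of the graphic, hence frame, matroid $M(G)$.

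\emph{Step 2: Transfer of excluded minors.} Let $X$ be an excluded minor for the class of frame matroids (or for lifted-graphic matroids) with $X \notin \mathcal{E}$. Every proper minor of $X$ is frame (respectively lifted-graphic), hence lies in $\mathcal{E}$, so $X$ is an excluded minor for $\mathcal{E}$. It therefore suffices to show that infinitely many members of one of the Chen--Geelen infinite families of excluded minors are not elementary lifts of frame matroids.

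\emph{Step 3: Ruling out elementary lifts (the main obstacle).} For this step I would use Theorem \ref{thm: brylawski}. If $X \in \mathcal{E}$, then some frame matroid $N$ with $r(N) = r(X) - 1$ is an elementary quotient of $X$: every circuit of $X$ is a union of circuits of $N$, and the rank drops by exactly one on all sets not spanned by a linear class. So I would examine the explicit Chen--Geelen family, whose members are highly structured (built around spikes or swirls of growing rank). I would then show that no rank-$(r-1)$ elementary quotient of such a matroid is frame. The idea is to locate, inside any such quotient, a fixed small non-frame configuration, for instance $U_{3,7}$-like or $M^*(K_5)$-like restrictions that are preserved under elementary quotients, using the large connectivity and symmetry of these matroids.

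If this direct analysis proves too delicate, the fallback argument is as follows. Suppose $\mathcal{E}$ had only finitely many excluded minors. Then each Chen--Geelen excluded minor would either lie in $\mathcal{E}$ or contain one of boundedly many fixed matroids as a minor. The second option would contradict minimality once the members of the family are large. So all but finitely many would lie in $\mathcal{E}$, and I would aim to contradict this using the elementary-quotient rank argument above on the growing family.
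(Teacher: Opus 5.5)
Your Steps 1 and 2 are correct; in fact the paper relies on minor-closedness of $\mathcal{E}$ without stating it. The gap is Step 3, and for the frame family it is not merely unverified but impossible: \emph{every} excluded minor $X$ for frame matroids already lies in $\mathcal{E}$. To see this, pick any non-loop $f \in E(X)$ and let $K$ be obtained from $X$ by adding an element $g$ parallel to $f$. Then $K\del g = X$, and $K/g$ is $X/f$ with $f$ added as a loop. Since $X/f$ is frame by minimality, and adding a loop preserves being frame, $X$ is an elementary lift of a frame matroid.

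Consequently the hypothesis ``$X \notin \mathcal{E}$'' in Step 2 is never met by a Chen--Geelen excluded minor for frame matroids. Your claim that no rank-$(r-1)$ elementary quotient of such an $X$ is frame is false, since $K/g$ above is one. Your fallback argument only reaches the conclusion that all but finitely many of them lie in $\mathcal{E}$, which is true and gives no contradiction. For the lifted-graphic family you give no argument at all. The proposed tool of small non-frame restrictions ``preserved under elementary quotients'' is also not a valid principle, because an elementary quotient lowers the rank of every set containing a circuit outside the linear class.

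The missing idea is to build new excluded minors instead of reusing the Chen--Geelen ones. The paper shows that $N \oplus N$ is an excluded minor for $\mathcal{E}$ whenever $N$ is an excluded minor for frame matroids.
\begin{itemize}
\item Every proper minor of $N\oplus N$ is, up to symmetry, a minor of $(N\del e)\oplus N$ or $(N/e)\oplus N$. These lie in $\mathcal{E}$ by the parallel-element observation above, applied to the second copy, with the first copy frame and frame matroids closed under direct sums.
\item Suppose $K\del e = N\oplus N$ and $K/e$ is frame. Each copy $E_i$ of $E(N)$ must span $e$ in $K$, since otherwise $(K/e)|E_i = K|E_i \cong N$ would be frame.
\item But $E_1$ and $E_2$ are skew in $K$, so submodularity gives $r_K(\{e\}) = 0$. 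Then $K/e = N \oplus N$, which is not frame, a contradiction.
\end{itemize}
The infinitely many excluded minors $N$ of Chen--Geelen then yield infinitely many excluded minors $N \oplus N$ for $\mathcal{E}$.
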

\begin{proof}
Let $N$ be an excluded minor for the class of frame matroids.
We will show that $N \oplus N$ is an excluded minor for the class of elementary lifts of frame matroids, which will prove the statement.
First, for any $e \in E(N)$, the matroids $N\del e \oplus N$ and $N/e \oplus N$ are elementary lifts of frame matroids because $N/f$ is a frame matroid for any $f \in E(N)$ and the direct sum of two frame matroids is a frame matroid.
Suppose that there is a matroid $K$ with an element $e$ so that $K \del e = N \oplus N$ and $K/e$ is a frame matroid.
Since $N$ is not a frame matroid, both copies of $E(N)$ span $e$.
But then $e$ is a loop, which contradicts that $K/e$ is a frame matroid.
Therefore $N \oplus N$ is not an elementary lift of a frame matroid.
\end{proof}

DeVos, Funk, and Pivotto \cite[Theorem 1.6]{DeVos-Funk-Pivotto-2014} proved that if $\Gamma'$ is infinite, then the classes of $\Gamma'$-frame matroids and $\Gamma'$-lifted-graphic matroids have infinitely many excluded minors.
We expect that this is true more generally for $\cM(\Gamma, \Gamma_1, \cA)$ when $\Gamma$ is infinite, and that the following conjecture could be proved by modifying the techniques from \cite{DeVos-Funk-Pivotto-2014}.

\begin{conjecture} \label{conj: excluded minors infinite case}
If $\Gamma$ is infinite and $\Gamma/\Gamma_1$ is isomorphic to a group in $\cA$, then $\cM(\Gamma, \Gamma_1, \cA)$ has infinitely many excluded minors.
\end{conjecture}

What about finite groups?
It is expected that if $\Gamma'$ is finite and abelian, then the classes of $\Gamma'$-frame and $\Gamma'$-lifted-graphic matroids have finitely many excluded minors; this would follow from a conjecture of Geelen and Gerards \cite[Conjecture 1.1]{Geelen-Gerards-2009}.
We are not aware of any results or conjectures for the case in which $\Gamma'$ is finite and non-abelian.
However, we make the optimistic conjecture that if $\Gamma$ is finite, then $\cM(\Gamma, \Gamma_1, \cA)$ has finitely many excluded minors.

\begin{conjecture} \label{conj: excluded minors finite case}
If $\Gamma$ is finite, then $\cM(\Gamma, \Gamma_1, \cA)$ has finitely many excluded minors.
\end{conjecture}

If true, this is likely to be extremely difficult to prove. 
In the special case that $|\Gamma| = |\Gamma_1| = 2$, $\cM(\Gamma, \Gamma_1, \cA)$ is the class of \emph{even-cycle matroids}, and as described in \cite{Grace-vanZwam-2019}, Pivotto and Royle have found nearly $400$ excluded minors for this class.

\subsection{Duality and direct sums}

The class $\cM(\Gamma, \Gamma_1, \cA)$ is never closed under duality.
There are many graphic matroids with dual not in $\cM(\Gamma, \Gamma_1, \cA)$ for any $\Gamma$, $\Gamma_1$, and $\cA$.

\begin{proposition} \label{prop: duality}
The class $\cM(\Gamma, \Gamma_1, \cA)$ is not closed under duality.
In particular, if $G$ is a $2$-connected graph that is not projective-planar, then $M(G) \oplus M(G)$ is in  $\cM(\Gamma, \Gamma_1, \cA)$ and $(M(G) \oplus M(G))^*$ is not in $\cM(\Gamma, \Gamma_1, \cA)$.
\end{proposition}
\begin{proof}
Let $G'$ be the disjoint union of two copies of $G$, and note that $M(G') \cong M(G) \oplus M(G)$.
Let $\psi$ be the $\Gamma$-gain function for $G'$ that maps each oriented edge to the identity of $\Gamma$.
Then $M(\Gamma, \Gamma_1, \cA, G', \psi) = M(G')$, so $M(G) \oplus M(G)$ is in  $\cM(\Gamma, \Gamma_1, \cA)$.

Note that $(M(G) \oplus M(G))^*$ is equal to $M(G)^* \oplus M(G)^*$ \cite[Proposition 4.2.21]{Oxley2011}.
We will first show that $M(G)^*$ is not a frame matroid.
Suppose it is a frame matroid.
Since $M(G)^*$ is binary, a result of Zaslavsky \cite[Corollary 3.2]{Zaslavsky-1987} shows that $M(G)^*$ is signed-graphic.
Then since $G$ is $2$-connected and $M(G)^*$ is signed-graphic, a result of Slilaty \cite[Theorem 3]{Slilaty-2005} implies that $G$ is projective-planar, a contradiction.
So $M(G)^*$ is not a frame matroid.

Now suppose that $M(G)^* \oplus M(G)^*$ is in $\cM(\Gamma, \Gamma_1, \cA)$.
Then there is a matroid $K$ with an element $e$ so that $K \del e = M(G)^* \oplus M(G)^*$ and $K/e$ is a frame matroid.
Since $M(G)^*$ is not a frame matroid, both copies of $E(M(G)^*)$ in $K$ span $e$.
But then $e$ is a loop, a contradiction.
\end{proof}

The class $\cM(\Gamma, \Gamma_1, \cA)$ is never closed under direct sums, unless $\Gamma_1$ is trivial, in which case $\cM(\Gamma, \Gamma_1, \cA)$ is the class of $\Gamma$-frame matroids.
For an integer $r \ge 3$, an \emph{$r$-spike} is a simple rank-$r$ matroid on $2r+1$ elements with an element $t$ (a \emph{tip}) that is on $r$ lines, each with three elements \cite[Proposition 1.5.18]{Oxley2011}.

\begin{proposition} \label{prop: direct sums}
If $\Gamma_1$ is nontrivial, then $\cM(\Gamma, \Gamma_1, \cA)$ is not closed under direct sums.
In particular, $\cM(\Gamma, \Gamma_1, \cA)$ contains an $r$-spike for all $r \ge 3$, and if $M$ is an $r$-spike in $\cM(\Gamma, \Gamma_1, \cA)$ with $r \ge 5$, then $M \oplus M$ is not in $\cM(\Gamma, \Gamma_1, \cA)$.
\end{proposition}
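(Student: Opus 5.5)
The plan has two parts. First, exhibit an $r$-spike in $\cM(\Gamma, \Gamma_1, \cA)$ for every $r \ge 3$ using lift matroids. Then show that for $r \ge 5$ any direct sum $M \oplus M$ of such a spike would force $M$ to be a frame matroid, and derive a contradiction.

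\emph{Constructing the spike.} By Proposition~\ref{prop: special cases} it suffices to build an $r$-spike as the lift matroid of a $\Gamma_1$-gain graph. Fix $g \in \Gamma_1 - \{\ep\}$. Let $G$ be the cycle $v_1 v_2 \cdots v_r v_1$ with every edge doubled. The $i$-th pair $\{x_i, y_i\}$ gets gains $\ep$ and $g$, so each digon is unbalanced. Add a loop $t$ at $v_1$ with gain $g$. Normalize $x_1, \dots, x_r$ so that the only balanced cycles are those using an even number of $g$-edges among the doubled edges.

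Then $L(G,\psi)$ has $2r+1$ elements and rank $|V(G)| = r$, since $G$ is connected and unbalanced. There are no loops, because every loop of $G$ is unbalanced. There are no parallel pairs: each digon is unbalanced, and $t$ together with any single edge contains no balanced cycle or handcuff.

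For each $i$, the set $\{t, x_i, y_i\}$ is a circuit. If $v_1$ is an end of the $i$-th digon, it is a tight handcuff with both cycles unbalanced. Otherwise it is a pair of vertex-disjoint unbalanced cycles, which is a circuit of a lift matroid. So $t$ lies on $r$ three-point lines $\{t,x_i,y_i\}$, and $L(G,\psi)$ is an $r$-spike with tip $t$. For the remaining three-point lines needed for simplicity, it will suffice to check that no $3$-set other than these contains a balanced cycle of length $\le 3$ or a small handcuff; this is routine for $r \ge 3$.

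\emph{Direct sums.} Suppose $M$ is an $r$-spike in $\cM(\Gamma,\Gamma_1,\cA)$ with $r \ge 5$, and suppose $M \oplus M = M(\Gamma,\Gamma_1,\cA,G,\psi)$ for some gain graph. Then there is a matroid $K$ with an element $e$ such that $K \del e = M \oplus M$ and $K/e = N$, where $N$ is the $(\Gamma/\Gamma_1)$-frame matroid $F(G,\psi/\Gamma_1)$.

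If one copy $E_1$ of $E(M)$ does not span $e$ in $K$, then $N|E_1 = (K/e)|E_1 = K|E_1 = M$, so $M$ is a frame matroid. Otherwise both copies span $e$. Since the copies are skew in $K \del e$, this forces $e$ to be a loop, and then $N = M \oplus M$, so again $M$ is frame, being a restriction of a frame matroid. This is the same argument used in Proposition~\ref{prop: duality}.

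Hence the theorem reduces to the following claim: no $r$-spike with $r \ge 5$ is a frame matroid. This is the main obstacle. Given this claim, the direct-sum statement follows, and since $r$-spikes with $r\ge 5$ exist in $\cM(\Gamma,\Gamma_1,\cA)$ by the construction above, the class is not closed under direct sums.

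\emph{Spikes of rank at least $5$ are not frame.} My first attempt would be a direct biased-graph argument. Suppose $M = F(H,\cB)$ is a frame representation. The tip $t$ lies on $r \ge 5$ three-point lines, and any two legs together form a $4$-element circuit $\{x_i,y_i,x_j,y_j\}$.

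I would consider $M/t$. It has rank $r-1$ and consists of $r$ parallel classes $\{x_i,y_i\}$ forming a rank-$(r-1)$ circuit, so $\mathrm{si}(M/t)$ is $U_{r-1,r}$.

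Next I would analyse what $t$ can be in $H$: an unbalanced loop, a half-edge, or a link. In each case I would use the description of contraction in frame matroids to locate the $2r$ leg elements. The goal is to show that the lines $\{t,x_i,y_i\}$ force each leg to form, together with $t$, a theta or a tight or loose handcuff. With at least five legs, two legs must then be vertex-disjoint from $t$'s structure in a way that produces a $4$-element circuit of the wrong type, namely a loose handcuff with both cycles unbalanced that contains $t$. That would contradict $r(M)=r$.

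If this case analysis becomes unwieldy, the fallback is to cite the known fact that frame matroids cannot contain the rank-$\ge 5$ tipped spikes. This follows from the excluded-minor results for frame matroids, via the characterization of points lying on many long lines in a frame matroid: such points must be joints, and the lines through a joint span at most a frame.
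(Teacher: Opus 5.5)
Your proposal follows the paper's proof: it uses the same spike (a doubled $r$-cycle plus an unbalanced loop, all gains in $\Gamma_1$, with the loop as tip), the same observation that the element $e$ of $K$ cannot be spanned by both skew copies of $E(M)$ without being a loop, so that $M$ would be a restriction of the frame matroid $K/e$, and the same reliance on the fact that $r$-spikes with $r \ge 5$ are not frame matroids, which the paper simply cites from Geelen, Nelson, and Walsh. Citing that fact is the right choice, because your sketched case analysis overlooks the main case in which $t$ is a link and $\{t,x_i,y_i\}$ is a balanced triangle, and your heuristic that points on many long lines must be joints does not apply to the three-point lines through a spike's tip (a link of a Dowling geometry already lies on many three-point lines); also, your description of the balanced long cycles is accurate only when $g$ has order $2$, though this does not affect the spike property.
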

\begin{proof}
We will first show that $\cM(\Gamma, \Gamma_1, \cA)$ contains an $r$-spike for all $r \ge 3$.
Let $G$ be the graph obtained from an $r$-edge cycle by making a parallel copy of each edge and then adding a loop to one vertex.
Let $\alpha$ be a non-identity element of $\Gamma_1$, and let $\psi$ be a $\Gamma$-gain function for $G$ that maps each oriented edge to $\ep$ or $\alpha$ so that each loop and $2$-cycle is unbalanced.
Let $M_1 = M(\Gamma, \Gamma_1, \cA, G, \psi)$.
Since the loop of $G$ together with each parallel pair of edges forms a circuit of $M_1$ we see that $M_1$ is an $r$-spike with the loop as the tip.

Let $M$ be any $r$-spike in $\cM(\Gamma, \Gamma_1, \cA)$ with $r \ge 5$.
It was proven in \cite[page 35]{Geelen-Nelson-Walsh-2024} that $r$-spikes with $r \ge 5$ are not frame matroids.
Suppose that $M  \oplus M \in \cM(\Gamma, \Gamma_1, \cA)$.
Then there is a matroid $K$ with an element $e$ so that $K \del e = M \oplus M$ and $K/e$ is a frame matroid.
Since $M$ is not a frame matroid, both copies of $E(M)$ in $K$ span $e$.
But then $e$ is a loop, a contradiction.
\end{proof}

\section{Examples} \label{sec: examples}

In this section we will consider three interesting classes of elementary lifts of frame matroids, two from existing literature and one new, and show that they arise as special cases of the matroid of Definition \ref{def: the matroid}.

\subsection{Symmetry-forced rigidity matroids} \label{sec: rigidity}

For our first example we will show that a recent construction of Bernstein \cite{Bernstein2022} arises from Definition \ref{def: the matroid} in the special case that $\Gamma$ is $SE(2)$, the special Euclidean group of direct isometries of $\bR^2$.
Let $(G, \psi)$ be an $SE(2)$-gain graph.
We will first describe the matroid of Theorem \ref{thm: main} applied to $(G, \psi)$, then describe Bernstein's matroid for $(G, \psi)$, and then show that they are equal.

Recall from Example \ref{ex: special Euclidean group as Frobenius group} that $SE(2)$ is isomorphic to the semidirect product $T(2) \rtimes SO(2)$, where $T(2)$ is the normal subgroup of translations and $SO(2)$ is the subgroup of rotations.
Also recall that if $\cA$ is the set of point-stabilizing subgroups of $SE(2)$, then $\{T(2)\} \cup \cA$ is a Frobenius partition of $SE(2)$.
By Lemma \ref{lem: equivalences for the linear class} with $(\Gamma, \Gamma_1, \cA) = (SE(2), T(2), \cA)$ we see that a circuit $C$ of the underlying $SO(2)$-frame matroid $F(G, \psi/T(2))$ is in the linear class $\cC(SE(2), T(2), \cA, G, \psi)$ of Definition \ref{def: the linear class} if $C$ is an $SE(2)$-balanced cycle or \begin{enumerate}

    \item[$(*)$] $C$ is a handcuff or a theta graph with a cyclic covering pair $(W_1, W_2)$ of walks such that $\psi(W_1)$ and $\psi(W_2)$ fix the same point of $\bR^2$.
\end{enumerate} 
We will write $\cC$ for $\cC(SE(2), T(2), \cA, G, \psi)$, and $M$ for $M(SE(2), T(2), \cA, G, \psi)$.

We next describe how Bernstein defines a linear class $\cC'$ of the underlying $SO(2)$-frame matroid $F(G, \psi/T(2))$.
Following \cite[Definition 5.8]{Bernstein2022}, a \emph{covering pair of walks} of a theta graph or handcuff $C$ is a pair $(W_1, W_2)$ of closed walks on $G[C]$, starting at the same vertex, so that every edge is traversed one or two times in the concatenation $W_1W_2$.
Clearly every cyclic covering pair of walks, as defined in Section \ref{sec: the construction}, is a covering pair of walks, but the converse is not always true.
Then following \cite[Definition 5.8, Lemma 5.9, Lemma 5.16]{Bernstein2022}, a circuit $C$ of the underlying $SO(2)$-frame matroid $F(G, \psi/T(2))$ is in the linear class $\cC'$ if $C$ is an $SE(2)$-balanced cycle or
\begin{enumerate}
    \item[$(*')$] $C$ is a handcuff or a theta graph with a covering pair $(W_1, W_2)$ of walks such that $\psi(W_1)\psi(W_2) = \psi(W_2)\psi(W_1)$ and neither $\psi(W_1)$ nor $\psi(W_2)$ is a translation.\footnote{In \cite[Lemma 5.16]{Bernstein2022} the condition that neither $\psi(W_1)$ nor $\psi(W_2)$ is a translation is omitted, but it should be present or else $C$ could have a $(\psi/T(2))$-balanced cycle as a proper subset and would therefore not be a circuit of $F(G, \psi/T(2))$.}
\end{enumerate} 
Bernstein writes $M^L(G, \psi)$ for the elementary lift of $F(G, \psi/T(2))$, and uses it to give a combinatorial characterization of symmetry-forced rigidity over subgroups of $SE(2)$ \cite[Theorem 5.10]{Bernstein2022}.

To show that $M$ and $M^L(G, \psi)$ are equal it suffices to show that $\cC$ and $\cC'$ are equal.
Since they both contain the same cycles of $G$, it suffices to prove the following.

\begin{lemma} \label{lem: equivalence with Bernstein}
    Let $C$ be a handcuff or theta graph circuit of $F(G, \psi/T(2))$.
    Then $C \in \cC$ if and only if $C \in \cC'$.
\end{lemma}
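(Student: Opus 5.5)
The plan is to reduce both directions to a single group-theoretic fact about $SE(2)$: two non-translation direct isometries commute if and only if they fix the same point of $\bR^2$. For the forward direction, if $\sigma,\tau$ are rotations (non-translations) fixing the same point $x$, they lie in the stabilizer of $x$, which is isomorphic to $SO(2)$ and hence abelian, so they commute. For the converse, suppose $\sigma$ fixes $x$ and $\sigma\tau=\tau\sigma$. Then $\sigma(\tau(x))=\tau(\sigma(x))=\tau(x)$, so $\tau(x)$ is a fixed point of $\sigma$. Since a non-translation in $SE(2)$ has exactly one fixed point (Example \ref{ex: special Euclidean group as Frobenius group}), we get $\tau(x)=x$, so $\tau$ fixes $x$ too.

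For $C\in\cC\Rightarrow C\in\cC'$, I would invoke Lemma \ref{lem: equivalences for the linear class}$(iii)$, which gives a cyclic covering pair $(W_1,W_2)$ with $\psi(W_1),\psi(W_2)$ in a common point stabilizer $A$. A cyclic covering pair is in particular a covering pair. Each $\psi(W_i)$ is non-identity and not a translation: $W_i$ traverses a cycle of $C$ that is $(\psi/T(2))$-unbalanced, because $C$ is a handcuff or theta circuit of the frame matroid, so its image in $SO(2)$ is nontrivial. One point needs care here. The image of $W_i$ in $SO(2)$ is the gain of the cycle it contains, since the tree edges are traversed back and forth and the group $SO(2)$ is abelian. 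By the fact above, $\psi(W_1)$ and $\psi(W_2)$ commute, so $(*')$ holds.

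For $C\in\cC'\Rightarrow C\in\cC$, take a covering pair $(W_1,W_2)$ as in $(*')$. The fact gives a point $x$ fixed by both $\sigma=\psi(W_1)$ and $\tau=\psi(W_2)$, so both lie in the stabilizer $A$ of $x$. The remaining task is to pass from a general covering pair to the condition of Lemma \ref{lem: equivalences for the linear class}$(ii)$, and I expect this to be the main obstacle. I would choose a spanning tree $T$ of $G[C]$ and switch so that $T$ is normalized, with the switching value at the base vertex $v$ equal to the identity (Lemma \ref{lem: normalize a forest}). This leaves $\psi(W_i)$ unchanged. Let $e_1,e_2$ be the two edges of $C-T$. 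Each $\psi(W_i)$ is then a word in the gains $a^{\pm1}$ of $e_1$ and $b^{\pm1}$ of $e_2$.

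Since every edge is traversed once or twice in $W_1W_2$, each of $e_1,e_2$ appears at most twice in total. The images of $\sigma$ and $\tau$ in $SO(2)$ are nontrivial because neither is a translation. I would then run a short case analysis on how $e_1$ and $e_2$ are distributed between $W_1$ and $W_2$. In each case the goal is to show that either $a,b\in A$, using malnormality of $A$ (as in the last paragraph of the proof of Theorem \ref{thm: main precise}, where $g\alpha g^{-1}\in A$ with $\alpha\in A-\{\ep\}$ forces $g\in A$), or else that the configuration forces a translation and so contradicts $(*')$.

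A cleaner way to organize this is geometric. The subgroup $\langle a,b\rangle$ contains the commuting rotations $\sigma,\tau$ about $x$, and the covering condition lets one solve for $a$ and $b$ from $\sigma$ and $\tau$, possibly after conjugating by one of them. Both $a$ and $b$ then fix $x$, which gives $\psi(C)\subseteq A$ and hence $C\in\cC$. If the case analysis grows too large, a fallback is to first show that one may replace any covering pair by a cyclic covering pair, using $(*')$ and commutation, and then apply Lemma \ref{lem: equivalences for the linear class}$(iii)$ together with the fact above.
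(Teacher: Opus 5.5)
\paragraph{Where the proposal stands.} Your group-theoretic fact is proved correctly, and the fixed-point argument is a fine substitute for the paper's appeal to malnormality. Your proof of $\cC\subseteq\cC'$ is also correct, and your check that each $\psi(W_i)$ is not a translation fills in a point the paper leaves implicit. The gap is the direction $\cC'\subseteq\cC$, at the step you flag, and the case analysis you describe cannot close it.

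\paragraph{Why the case analysis fails.} Take covering pairs as described here: closed walks at a common vertex, with every edge traversed once or twice in $W_1W_2$. Let $C$ be a tight handcuff with centre $v$, with tree edges of identity gain, so that going around the two cycles from $v$ picks up $a$ and $b$. Here $a,b$ are quarter-turns about distinct points $p\neq q$, so $C$ is a circuit of $F(G,\psi/T(2))$.
\begin{itemize}
\item If $W_1=W_2$ both go once around each cycle from $v$, then $\psi(W_1)=\psi(W_2)=ab$. These trivially commute, and $ab$ is a half-turn, hence not a translation.
\item If instead $W_1$ goes around the first cycle and $W_2$ goes around the second, then the first, then the second again, the values are $a$ and $bab$. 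Both fix $p$, because the rotation angles of $a$ and $b$ sum to $\pi$.
\end{itemize}
In both cases $(*')$ holds. Yet the cyclic covering pair with values $a,b$ does not lie in a common point stabilizer, so $C\notin\cC$ by Lemma~\ref{lem: equivalences for the linear class}. So neither ``solving for $a$ and $b$'' nor your fallback can work; the fallback upgrades to a cyclic covering pair using only $(*')$, commutation, and malnormality. The information is lost exactly when a walk sees both non-tree edges, or sees one of them twice.

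\paragraph{What is missing.} You need the input the paper takes from Bernstein's Lemma 5.9. For his precise notion of covering pair, the property that the two walk values are commuting non-translations does not depend on which covering pair is used. His definition must therefore exclude degenerate pairs like those above; read literally, the paraphrased definition makes this direction false. The paper fixes one cyclic covering pair $(W_1,W_2)$. It characterizes $C\in\cC$ by Lemma~\ref{lem: equivalences for the linear class} and $C\in\cC'$ by Bernstein's Lemma 5.9, both in terms of the same two elements $\psi(W_1),\psi(W_2)$. Both directions then reduce to the group-theoretic fact you already proved. Without that lemma, or a proof of it from Bernstein's actual definition, your argument establishes only $\cC\subseteq\cC'$.
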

\begin{proof}
Let $(W_1, W_2)$ be a cyclic covering pair of walks for $C$.
Then $C \in \cC$ if and only if $\psi(W_1)$ and $\psi(W_2)$ fix the same point of $\bR^2$ (by Lemma \ref{lem: equivalences for the linear class}), and $C \in \cC'$ if and only if $\psi(W_1)\psi(W_2) = \psi(W_2)\psi(W_1)$ and neither $\psi(W_1)$ nor $\psi(W_2)$ is a translation (by \cite[Lemma 5.9]{Bernstein2022}). 
So it suffices to show that two non-translations $\psi(W_1)$ and $\psi(W_2)$ in $SE(2)$ fix the same point if and only if they commute.
First suppose that $\psi(W_1)$ and $\psi(W_2)$ fix the same point $z$ of $\bR^2$.
The function that maps each isometry $Ax + b$ to $(Ax + b) - z$ is clearly an isomorphism of $SE(2)$.
The images of $\psi(W_1)$ and $\psi(W_2)$ under this isomorphism are both rotations, and therefore they commute.
This implies that $\psi(W_1)$ and $\psi(W_2)$ commute.
Conversely, suppose that $\psi(W_1)$ and $\psi(W_2)$ commute.
For $i = 1,2$ let $x_i \in \bR^2$ be fixed by $\psi(W_i)$ and let $A_i$ be the subgroup of isometries that fix $x_i$.
Since $\psi(W_1)\psi(W_2) = \psi(W_2)\psi(W_1)$ we have $\psi(W_2)^{-1}\psi(W_1)\psi(W_2) = \psi(W_1)$.
Since $A_i$ is a malnormal subgroup of $SE(2)$ this implies that $\psi(W_2) \in A_1$, so $x_1 = x_2$ and $\psi(W_1)$ and $\psi(W_2)$ fix the same point.
\end{proof}

Since Theorem \ref{thm: main} generalizes Bernstein's construction, one might hope that Theorem \ref{thm: main} may have applications in the theory of symmetry-forced rigidity.
However, it seems that $SE(2)$ is the only relevant group that satisfies the hypotheses of Theorem \ref{thm: main}, so it is unclear how to generalize Bernstein's construction in a way that is meaningful in rigidity theory.
We leave this as an open direction for future research.

\subsection{Matroids of gain signed graphs} \label{sec: gain signed graphs}

For our next example we will show that a recent construction of Anderson, Su, and Zaslavsky \cite{Anderson-Su-Zaslavsky2024} arises from Definition \ref{def: the matroid} in the special case that $\Gamma = \Gamma_1 \rtimes \{1,-1\}^{\times}$ for an (additively-written) abelian group $\Gamma_1$ with no elements of order $2$, where $-1$ acts on $\Gamma_1$ by inversion.\footnote{Anderson, Su, and Zaslavsky state that their construction applies with $\Gamma_1$ as any abelian group, but in fact it only applies when $\Gamma_1$ has no elements of order $2$. This follows from Theorem \ref{thm: main converse} together with the discussion of Section \ref{sec: gain signed graphs}, and one can also construct counterexamples with four vertices and six edges when $\Gamma_1$ has an element of order $2$.}
Let $(G, \psi)$ be a $(\Gamma_1 \rtimes \{1,-1\}^{\times})$-gain graph.
We will first describe the matroid of Theorem \ref{thm: main} for $(G, \psi)$, then describe the matroid of Anderson, Su, and Zaslavsky for $(G, \psi)$, and then show that they are equal.

Since $\Gamma_1 \rtimes \{1,-1\}^{\times}$ is a semidirect product, it has $\Gamma_1$ as a normal subgroup.
As described in Example \ref{ex: inverted infinite abelian group}, the set 
$$\cA = \big\{\{(0,1), (a,-1)\} \colon a \in \Gamma_1\big\}$$
is a collection of order-$2$ malnormal subgroups of $\Gamma_1 \rtimes \{1,-1\}^{\times}$ so that $\{\Gamma_1\} \cup \cA$ is a nontrivial Frobenius partition of $\Gamma_1 \rtimes \{1,-1\}^{\times}$.
We see from Definition \ref{def: the linear class} that a circuit $C$ of the underlying $\{1,-1\}^{\times}$-frame matroid $F(G, \psi/\Gamma_1)$ is in the linear class $\cC(\Gamma, \Gamma_1, \cA, G, \psi)$ if $C$ is a $\psi$-balanced cycle or $C$ is a handcuff or a theta graph that, up to switching equivalence, has a $\psi$-normalized spanning tree $T$ so that the two edges $e$ and $f$ in $C - T$ have $\psi(e) = \psi(f)$.

We next describe how Anderson, Su, and Zaslavsky define a matroid from $(G, \psi)$.
First, since Anderson et al. do not work explicitly with $(\Gamma_1 \rtimes \{1,-1\}^{\times})$-gain functions for $G$, we must show that their setting is equivalent to ours.\footnote{Up to half edges and loose edges, which they allow and we do not.}
They have a $\Gamma_1$-gain function $\phi$ and a function $\sigma\colon E(G) \to \{1,-1\}^{\times}$.
Given the pair $(\phi, \sigma)$ we can define a $(\Gamma_1 \rtimes \{1,-1\}^{\times})$-gain function $\psi$ by setting $\psi(e, u, v) = (\phi(e, u, v), \sigma(e))$ for each edge $e$ with ends $u$ and $v$.
Conversely, let $\psi$ be a $(\Gamma_1 \rtimes \{1,-1\}^{\times})$-gain function.
Let $\phi$ be the $\Gamma_1$-gain function with $\phi(e, u, v) = a$ if and only if $\psi(e, u, v) = (a, b)$, and let $\sigma$ be the function from $E(G)$ to $\{1,-1\}^{\times}$ so that each edge $e$ with ends $u$ and $v$ satisfies $\sigma(e) = b$ if $\psi(e, u, v) = (a, b)$ (this is well-defined because each element in $\{1,-1\}^{\times}$ is its own inverse).
Then $\phi$ is a $\Gamma_1$-gain function and $\sigma$ is a function from $E(G)$ to $\{1,-1\}^{\times}$, and it is straightforward to see that each edge $e$ with ends $u$ and $v$ satisfies $(\phi(e, u, v), \sigma(e)) = \psi(e, u, v)$.
So pairs $(\phi, \sigma)$ are in bijection with $(\Gamma_1 \rtimes \{1,-1\}^{\times})$-gain functions.

Given a pair $(\phi, \sigma)$ where $\phi$ is a $\Gamma_1$-gain function for $G$ and $\sigma \colon E(G) \to \{1, -1\}^{\times}$, we will next describe how Anderson et al. assign to each walk $W$ in $G$ a value in $\Gamma_1$, denoted $\phi(W)$.
Let $W = u_0,e_1,u_1,e_2,\dots,e_l,u_l$ be a walk in $G$ with $l \ge 1$.
Anderson et al. \cite[pg. 514, (2.1)]{Anderson-Su-Zaslavsky2024} define
\begin{align}
\setcounter{equation}{0}
\phi(W) = \sum_{i = 1}^{l}\phi(e_i, u_{i-1}, u_i)\sigma(W_{0,i-1}),
\end{align}
where $\sigma(W_{0,j}) = \prod_{i = 1}^{j}\sigma(e_i)$.\footnote{Anderson et al. also have an orientation $\tau$ of $(G, \sigma)$, but since $\phi(W)$ is invariant under reorientation by \cite[Lemma 2.5]{Anderson-Su-Zaslavsky2024}, we are setting $\tau(u_{i-1}, e_i) = -1$ for all $i \in [l]$.}
We will proceed by induction on $l$ to show that if $\psi$ is the $(\Gamma_1 \rtimes \{1,-1\})$-gain function for $G$ with $\psi(e, u, v) = (\phi(e, u, v), \sigma(e))$ for each oriented edge $(e, u, v)$, then $\psi(W) = (\phi(W), \sigma(W))$.
This is clearly true if $l = 0$, so we may assume that $l \ge 1$.
Let $W' = u_0,e_1,u_1,e_2,\dots,e_{l-1},u_{l-1}$.
Then
\begin{align}
\psi(W) &= \psi(W') \psi(e_l, u_{l-1}, u_l)\\
&= \big(\phi(W'), \sigma(W')\big)\big(\phi(e_l, u_{l-1}, u_l), \sigma(e_l)\big)\\
&= \big(\phi(W') + \sigma(W')\phi(e_l, u_{l-1}, u_l), \sigma(W')\sigma(e_l)\big) \\
&= \big(\phi(W') + \sigma(W')\phi(e_l, u_{l-1}, u_l), \sigma(W)\big) \\
&= \big(\phi(W), \sigma(W)\big).
\end{align}
Line (3) holds by the bijection between $\psi$ and $(\phi, \sigma)$ described in the previous paragraph, line (4) holds by the group operation of $\Gamma_1 \rtimes \{1, -1\}^{\times}$, and line (6) follows from (1).

We next describe how Anderson et al. use walks to define a special class of circuits of the $\{1,-1\}^{\times}$-frame matroid $F(G, \sigma)$.
Let $C$ be a circuit of $F(G, \sigma)$.
Following \cite[page 511]{Anderson-Su-Zaslavsky2024}, a \emph{circuit walk} for $C$ is a closed walk on $C$ that traverses every edge and traverses each edge of $C$ in a cycle exactly once.
Then following \cite[page 516]{Anderson-Su-Zaslavsky2024}, the \emph{gain} of $C$ is $\phi(W)$ for a circuit walk $W$ of $C$, and $C$ is \emph{neutral} if it has gain $0$ (this is well-defined by \cite[Proposition 2.8]{Anderson-Su-Zaslavsky2024}).
It is straightforward to check that every circuit walk $W$ satisfies $\sigma(W) = 1$.
Therefore, since $\psi(W) = (\phi(W), \sigma(W))$, a circuit of $(G, \psi)$ is neutral if and only if it has a circuit walk $W$ with $\psi(W) = (0,1)$.
We will show that the set of neutral circuits is precisely the linear class $\cC(\Gamma, \Gamma_1, \cA, G, \psi)$ from Definition \ref{def: the linear class}.

\begin{lemma}
Let $\Gamma_1$ be an abelian group with no elements of order $2$ and let $(G, \psi)$ be a $(\Gamma_1 \rtimes \{1,-1\}^{\times})$-gain graph. 
A circuit $C$ of the underlying frame matroid $F(G, \psi/\Gamma_1)$ is neutral if and only if it is in the linear class $\cC(\Gamma, \Gamma_1, \cA, G, \psi)$
\end{lemma}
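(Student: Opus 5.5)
We split according to the type of circuit $C$ of $F(G, \psi/\Gamma_1)$, using the identity $\psi(W) = (\phi(W), \sigma(W))$ for walks and the fact that every circuit walk $W$ satisfies $\sigma(W)=1$. Thus $C$ is neutral if and only if some (equivalently every) circuit walk $W$ of $C$ has $\psi(W) = (0,1)$, the identity $\ep$ of $\Gamma$.

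\emph{Cycles.} If $C$ is a cycle, a circuit walk of $C$ is just a simple closed walk around $C$. So $C$ is neutral exactly when $C$ is $\psi$-balanced, which is exactly membership in $\cC(\Gamma,\Gamma_1,\cA,G,\psi)$.

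\emph{Handcuffs and theta graphs.} Here $C$ has no $(\psi/\Gamma_1)$-balanced cycle, so both cycles of a handcuff, and all three cycles of a theta graph, have $\sigma$-product $-1$. Neutrality is switching invariant, since switching conjugates $\psi(W)$ by $\eta(v)$ where $v$ is the start of $W$. Membership in $\cC$ is also switching invariant. So by Lemma \ref{lem: normalize a forest} we may assume a spanning tree $T$ of $G[C]$ is $\psi$-normalized. Let $e,f$ be the two edges of $C - T$, with $\psi(e,\cdot,\cdot)=(a,-1)$ and $\psi(f,\cdot,\cdot)=(b,-1)$ for suitable orientations; these elements are involutions, so the orientation does not matter. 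By the equivalence $(i)\Leftrightarrow(ii)$ of Lemma \ref{lem: equivalences for the linear class}, $C\in\cC$ if and only if $\{(a,-1),(b,-1)\}$ lies in one group of $\cA$. Since each group in $\cA$ has a single non-identity element, this happens if and only if $a=b$.

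\emph{Computing a circuit walk.} It remains to choose one circuit walk $W$ and compute $\psi(W)$ when only $e,f$ carry nontrivial gains. Fix a start vertex $v$ and record which edges of $e,f$ are traversed, in order; edges of $T$ contribute $\ep$. For a tight handcuff at $v$, $W$ goes around the cycle through $e$ and then around the cycle through $f$, giving $\psi(W)=(a,-1)(b,-1)=(a-b,1)$. For a loose handcuff, the connecting path is traversed twice with identity gains, giving the same product. For a theta graph, a circuit walk traverses each edge exactly once. The normalized tree $T$ must be a path-like spanning tree, with $e$ and $f$ lying on two different internal paths of the theta. A careful choice gives $\psi(W)=(a,-1)(b,-1)$ up to order and inversion, so again the gain is $\pm(a-b)$. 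In every case $C$ is neutral if and only if $a=b$, which by the previous paragraph holds if and only if $C\in\cC$.

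\emph{Main obstacle.} The hardest step is the theta case, checking that the circuit walk really does traverse $e$ and $f$ exactly once each. One must also make sure the "once or twice" traversal pattern in the definition of a circuit walk never makes $e$ or $f$ appear twice. This follows because $e$ and $f$ each lie in a cycle of $C$, and circuit walks traverse cycle edges exactly once. The hypothesis that $\Gamma_1$ has no elements of order $2$ is used only to know that $\cA$ is a Frobenius partition (Example \ref{ex: inverted infinite abelian group}), so that $\cC$ is defined.
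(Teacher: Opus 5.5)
Your treatment of cycles and handcuffs is correct and follows the paper's argument. You switch so that a spanning tree $T$ of $G[C]$ is normalized. A circuit walk then has gain $(a,-1)\circ(b,-1)=(a-b,1)$, and you compare this with the condition $\psi(C)\subseteq A$ for some $A\in\cA$ from Lemma~\ref{lem: equivalences for the linear class}, which holds if and only if $a=b$.

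The theta-graph paragraph does not work as written, and the case you single out as the main obstacle is actually empty. The missing observation is that $\Gamma/\Gamma_1\cong\{1,-1\}^{\times}$ is abelian of order $2$. If a theta graph has branch vertices $u,v$ and $u$--$v$ paths $P_1,P_2,P_3$, its three cycles have $\sigma$-values $\sigma(P_1)\sigma(P_2)$, $\sigma(P_2)\sigma(P_3)$ and $\sigma(P_1)\sigma(P_3)$, whose product is $1$. So your assumption that all three cycles have $\sigma$-product $-1$ is impossible. Every theta graph therefore contains a $(\psi/\Gamma_1)$-balanced cycle, and $F(G,\psi/\Gamma_1)$ has no theta circuits at all; this is exactly how the paper disposes of the case.

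The specific claims you make for theta graphs are also false. A closed walk traversing each edge of a theta graph exactly once would be an Euler tour, which cannot exist because the two branch vertices have degree $3$; so no circuit walk of a theta graph exists. The normalized spanning tree also need not be a path. Replace that paragraph with the parity observation above, and the rest of your proof is complete.
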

\begin{proof}
Let $\cC'$ be the set of neutral circuits of $F(G, \psi/\Gamma_1)$ and let $\cC = \cC(\Gamma, \Gamma_1, \cA, G, \psi)$.
Let $C$ be a circuit of $F(G, \psi/\Gamma_1)$.
By previous discussion, $C$ is neutral if and only if it has a circuit walk $W$ with $\psi(W) = (0,1)$.
Since a cycle $C$ is in $\cC$ if and only if it has a simple closed walk $W$ with $\psi(W) = (0,1)$, this implies that $\cC'$ and $\cC$ contain the same cycles.
So we may assume that $C$ is a handcuff (as $F(G, \psi/\Gamma_1)$ has no theta graph circuits).
Let $v$ be a vertex of $G[C]$, and let $W$ be a circuit walk for $C$ starting at $v$.
By Lemma \ref{lem: normalize a forest} there is a switching function $\eta$ so that $\eta(v) = (0,1)$ and $C$ has a $\psi^{\eta}$-normalized spanning tree $T$.
Let $e_1,e_2 \in C - T$.
Recall that $C$ is neutral if and only if $\psi(W) = (0,1)$.
Since $\eta(v) = (0,1)$, this is true if and only if $\psi^{\eta}(W) = (0,1)$.
Since $T$ is $\psi^{\eta}$-normalized we see that $\psi^{\eta}(W) = (0,1)$ if and only if $\psi^{\eta}(e_1) = \psi^{\eta}(e_2)$.
By Lemma \ref{lem: equivalences for the linear class} we see that $\psi^{\eta}(e_1) = \psi^{\eta}(e_2)$ if and only if $C \in \cC$.
\end{proof}

Finally, we can describe how Anderson et al. use neutral circuits to define a matroid.
They define a function $r \colon E(G) \to \bZ^{\ge 0}$ by $r(X) = |V(G[X])| - b(X) + \delta(X)$, where $\delta(X) = 0$ if every circuit of $F(G, \psi/\Gamma_1)|X$ is neutral and $\delta(X) = 1$ otherwise \cite[Definition 4.1]{Anderson-Su-Zaslavsky2024}.
They then prove in \cite[Theorem 11.1]{Anderson-Su-Zaslavsky2024} that $r$ is the rank function of a matroid.
By comparing $r$ with the rank function from Proposition \ref{prop: rank function}, we see that since $\cC(\Gamma, \Gamma_1, \cA, G, \psi)$ is the class of neutral circuits, their matroid is equal to $M(\Gamma, \Gamma_1, \cA, G, \psi)$.

Since Theorem \ref{thm: main} generalizes the construction of Anderson et al., one might hope that Theorem \ref{thm: main} has applications in the study of hyperplane arrangements, as this study was part of the motivation for the construction of Anderson et al.
Based on the discussion in Sections 1 and 9 of \cite{Anderson-Su-Zaslavsky2024} this seems very promising. 
We leave this as an open direction for future research.

\subsection{Representable matroids} \label{sec: the representable case}

We will now show that important classes of representable matroids arise from Definition \ref{def: the matroid}.
Let $\bF$ be a field, let $\Gamma_1$ be a subgroup of $\bF^{+}$, and let $\Gamma_2$ be a subgroup of $\bF^{\times}$ so that $\Gamma_1$ is closed under scaling by elements in $\Gamma_2$.
A \emph{$\Gamma_2$-frame matrix} is a matrix so that each column has at most two nonzero entries, each nonzero column contains a $1$, and each column with two nonzero entries contains a $1$ and a $-\gamma$ for some $\gamma \in \Gamma_2$.
Consider the class $\cM$ of matroids with a representation over $\bF$ of the form shown in Figure \ref{fig: special representable matroids}.
This class is very important in matroid structure theory.
When $\bF$ is finite, matroids in $\cM$ are the building blocks for objects called \emph{frame templates} (see \cite[page 6]{Structure_Theory}), and Geelen, Gerards, and Whittle are in the process of proving that (roughly speaking) every proper minor-closed class of $\bF$-representable matroids can be described by a finite set of frame templates \cite[Theorem 4.2]{Structure_Theory}.
Also, work of Nelson and Walsh \cite{Nelson-Walsh2022} indicates that if $\bF$ is prime, then the matroids in $\cM$ will play a crucial role in determining the maximum number of elements of a simple rank-$r$ $\bF$-representable matroid with no $\PG(3, \bF)$-minor, a significant open problem in extremal matroid theory.

\begin{figure}
\begin{center}
\begin{tabular}{|ccc|}
\hline 
& a row with entries from $\Gamma_1$ & \\
\hline 
& & \\
& $\Gamma_2$-frame matrix & \\ [0.5cm]
\hline
\end{tabular}
\end{center}
\caption{If $\bF$ is a field and $\Gamma_1$ and $\Gamma_2$ are subgroups of $\bF^+$ and $\bF^{\times}$, respectively, so that that $\Gamma_1$ is closed under scaling by elements in $\Gamma_2$, then the class of matroids with an $\bF$-representation of the form shown above is minor-closed.}
\label{fig: special representable matroids}
\end{figure}

We will show that the matroids in $\cM$ arise from Definition \ref{def: the matroid}.
It suffices to consider the case in which $\Gamma_1 = \bF^+$ and $\Gamma_2 = \bF^{\times}$.
Recall from Examples \ref{ex: finite field semidirect product} and \ref{ex: infinite field affine transformations} that $\bF^+ \rtimes \bF^{\times}$ has group operation $\circ$ with $(a,b) \circ (c,d) = (a + bc, bd)$ and has a Frobenius partition $\{\bF^+\} \cup \cA$ where elements $(a,b)$ and $(c,d)$ are in the same set in $\cA$ if and only if $a/(1 - b) = c/(1 - d)$.
We will first define a natural incidence matrix over $\bF$ for any $(\bF^+ \rtimes \bF^{\times})$-gain graph $(G, \psi)$; see Figure \ref{fig: incidence matrix example} for an example.

\begin{definition} \label{def: incidence matrix}
Let $\bF$ be a field, let $(G, \psi)$ be an $(\bF^{+} \rtimes \bF^{\times})$-gain graph, and let $D$ be an orientation of $G$.
Define a matrix $A(D, \psi)$ over $\bF$ with rows indexed by $V(G) \cup \{v_0\}$ with $v_0 \notin V(G)$ and columns indexed by $E(G)$ as follows: if $u \in V(G) \cup \{v_0\}$ and $e$ is an edge of $G$ oriented in $D$ from $v$ to $w$ with $\psi(e, v, w) = (a, b)$ where $a \in \bF^+$ and $b \in \bF^{\times}$, then the entry $A(D,\psi)_{u,e}$ in row $u$ and column $e$ satisfies
$$A(D, \psi)_{u,e} = 
\begin{cases}
    a, \, \textrm{ if $u = v_0$} \\
    1, \, \textrm{ if $u = v$ and $u \ne w$} \\
    -b, \, \textrm{ if $u = w$ and $u \ne v$} \\
    1 - b, \, \textrm{ if $u = v = w$} \\
    0, \textrm{ otherwise.}
\end{cases}
$$
\end{definition}

\begin{figure}
    \centering
\begin{tabular}{c c ccc}
\setlength\tabcolsep{3cm}
\begin{tikzpicture}[scale=0.75,line cap=round,line join=round,>=triangle 45,x=1cm,y=1cm, decoration={markings, 
	mark= at position 0.55 with {\arrow[scale = 1.5]{stealth}}}]

\draw[postaction={decorate}] (0,1)-- (-2,-2);
\draw[postaction={decorate}]  (0,1)-- (2,-2);
\draw[postaction={decorate}]  (-2,-2)-- (2,-2);
\draw[postaction={decorate}]  (-2,-2) to[bend left=30] (0,1);

\begin{scriptsize}
\draw [fill=black] (0,1) circle (3.5pt);
\draw [fill=black] (-2,-2) circle (3.5pt);
\draw [fill=black] (2,-2) circle (3.5pt);
\draw[color=black] (-2.2, 0) node {\normalsize $(0,1)$};
\draw[color=black] (-0.3, -0.7) node {\normalsize $(1,3)$};
\draw[color=black] (0.1,-1.6) node {\normalsize $(0,2)$};
\draw[color=black] (1.6,0) node {\normalsize $(2,4)$};
\draw[color=black] (2.9, -0.9) node {\normalsize $(4,1)$};
\draw[color=black] (1.4, 1.5) node {\normalsize $(3,2)$};
\draw[color=black] (-0.4, 1.5) node {\large $v_1$};
\draw[color=black] (-2.5,-2.5) node {\large $v_2$};
\draw[color=black] (2.5,-2.5) node {\large $v_3$};
\end{scriptsize}


\path[min distance=1.5cm] (2,-2)edge[in=85,out=5] (2,-2);
\path[min distance=1.5cm] (0,1)edge[in=85,out=5] (0,1);
\end{tikzpicture}

& &&&

\raisebox{1.45\height}{$\begin{matrix}
    v_0 \\
    v_1 \\
    v_2\\
    v_3
    \end{matrix}
    \left[ \begin{array}{cccccc}
   3 & 1 & 0 & 2 & 0 & 4\\
   \hline
   \mm 1 & 1     & \mm 1 & 1     & 0 & 0\\
   0     & \mm 3 & 1     & 0     & 1 & 0\\
   0     & 0     & 0     & \mm 4 & \mm 2 & 0
   \end{array} \right]$}
\\

$(G, \psi)$ &&&& $A(D, \psi)$
\end{tabular}
    \caption{A $(\GF(5)^{+} \rtimes \GF(5)^{\times})$-gain graph $(G, \psi)$ with an orientation $D$ of $G$, and the incidence matrix $A(D, \psi)$ over $\GF(5)$.}
    \label{fig: incidence matrix example}
\end{figure}
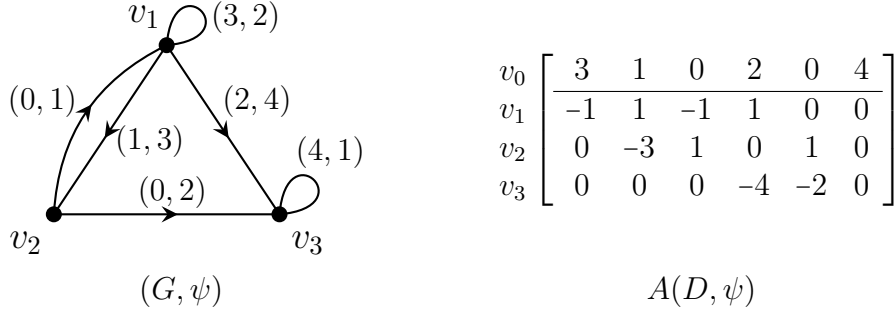

We will show that $A(D, \psi)$ represents the matroid $M = M(\bF^{+} \rtimes \bF^{\times}, \bF^+, \cA, G, \psi)$ over $\bF$.
We need two lemmas about operations that preserve the vector matroid of $A(D, \psi)$.
We will first show that if $D$ and $D'$ are two different orientations of $G$, then $A(D, \psi)$ and $A(D', \psi)$ are \emph{projectively equivalent}, which means that one can be obtained from the other by row operations and column scaling.
This implies that they have isomorphic vector matroids.

\begin{lemma} \label{lem: reorientation and projective equivalence}
Let $\bF$ be a field, let $(G, \psi)$ be an $(\bF^{+} \rtimes \bF^{\times})$-gain graph, and let $D$ and $D'$ be orientations of $G$.
Then $A(D', \psi)$ can be obtained from $A(D, \psi)$ by column scaling.
\end{lemma}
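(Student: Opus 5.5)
Since $D$ and $D'$ can differ only in the orientation of some edges, and column operations act column by column, I will compare the two matrices one column at a time. For an edge $e$ oriented identically in $D$ and $D'$, the columns indexed by $e$ in $A(D,\psi)$ and $A(D',\psi)$ coincide. Loops also need no work, because a loop has the same orientation in every orientation of $G$: both ends are $v$, and its column is $(a, 1-b)$ placed in rows $v_0$ and $v$. So the only case left is a non-loop edge $e$ with ends $v \ne w$, oriented from $v$ to $w$ in $D$ and from $w$ to $v$ in $D'$. For such an edge I will show that the $D'$-column is a nonzero scalar multiple of the $D$-column.

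Write $\psi(e,v,w) = (a,b)$. By Definition \ref{def: incidence matrix}, the column of $e$ in $A(D,\psi)$ has entries $a$ in row $v_0$, $1$ in row $v$, $-b$ in row $w$, and zeros elsewhere. For $A(D',\psi)$ we need $\psi(e,w,v) = \psi(e,v,w)^{-1}$. Using the inverse formula for the outer semidirect product with scaling action, $(a,b)^{-1} = (-b^{-1}a, b^{-1})$. Indeed $(a,b)\circ(-b^{-1}a,b^{-1}) = (a + b(-b^{-1}a), 1) = (0,1)$. So the column of $e$ in $A(D',\psi)$ has entries $-b^{-1}a$ in row $v_0$, $1$ in row $w$, $-b^{-1}$ in row $v$, and zeros elsewhere.

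Multiplying the $D$-column by $-b^{-1}$, which is a nonzero scalar since $b \in \bF^{\times}$, gives entries $-b^{-1}a$ in row $v_0$, $-b^{-1}$ in row $v$, and $1$ in row $w$. This is exactly the $D'$-column. Scaling each reversed column by the corresponding factor $-b^{-1}$ therefore turns $A(D,\psi)$ into $A(D',\psi)$.

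The only real content is getting the inverse in $\bF^+ \rtimes \bF^{\times}$ right and keeping the entries in rows $v$ and $w$ straight. No step is a genuine obstacle.
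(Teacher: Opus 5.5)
Your proposal is correct and matches the paper's proof: both compute $(a,b)^{-1}=(-b^{-1}a,b^{-1})$ in $\bF^+\rtimes\bF^{\times}$ and scale the column of each reversed non-loop edge by $-b^{-1}$. The paper reduces to the case of a single reversed edge, whereas you treat all reversed columns at once; this is only a cosmetic difference.
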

\begin{proof}
It suffices to consider the case in which there is only one edge $e$ that is oriented differently in $D$ and $D'$, so $e$ is a non-loop.
Let $u$ and $v$ be the ends of $e$, and assume that $e$ is oriented from $u$ to $v$ in $D$ and from $v$ to $u$ in $D'$.
Let $\psi(e, u, v) = (a, b)$.
Then column $e$ of $A(D, \psi)$ has entry $a$ in row $v_0$, entry $1$ in row $u$, entry $-b$ in row $v$, and all other entries are $0$.
Note that $(a, b)^{-1} = (-b^{-1}a, b^{-1})$ in $\bF^+ \rtimes \bF^{\times}$.
Let $A'$ be obtained from $A(D, \psi)$ by multiplying column $e$ by $-b^{-1}$.
In $A'$, column $e$ has entry $-b^{-1}a$ in row $v_0$, entry $-b^{-1}$ in row $u$, entry $1$ in row $v$, and all other entries are $0$.
Since $(a, b)^{-1} = (-b^{-1}a, b^{-1})$, we see that $A' = A(D', \psi)$.
\end{proof}

We will next show that applying a switching function to $\psi$ does not change the vector matroid of $A(D, \psi)$.

\begin{lemma} \label{lem: switching and projective equivalence}
Let $\bF$ be a field, let $G$ be a graph, and let $D$ be an orientation of $G$.
If $\psi$ and $\psi'$ are switching-equivalent $(\bF^{+} \rtimes \bF^{\times})$-gain functions for $G$, then $A(D, \psi)$ and $A(D, \psi')$ are projectively equivalent.
\end{lemma}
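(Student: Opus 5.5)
It suffices to treat switching at a single vertex. Every switching function $\eta$ is a composition of switchings that are non-identity at only one vertex, and projective equivalence is transitive. So we reduce to $\psi' = \psi^{\eta}$ where $\eta(x) = \ep$ for all $x \ne v$ and $\eta(v) = (c,d)$ with $c \in \bF$ and $d \in \bF^{\times}$. By Lemma \ref{lem: reorientation and projective equivalence} we may also choose the orientation $D$ freely up to column scaling. We therefore orient every non-loop at $v$ away from $v$. Only columns of edges incident with $v$ change, so we compare those columns in $A(D,\psi)$ and $A(D,\psi')$.

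We then compute the new gains using $(a,b)^{-1} = (-b^{-1}a, b^{-1})$ and $(a,b)\circ(c',d') = (a+bc', bd')$.

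For a non-loop $e$ oriented from $v$ to $w$ with $\psi(e,v,w) = (a,b)$, we get $\psi'(e,v,w) = (c,d)^{-1}\circ(a,b) = (d^{-1}(a-c), d^{-1}b)$. So the column changes from $(a, 1, -b)$ in rows $(v_0, v, w)$ to $(d^{-1}(a-c), 1, -d^{-1}b)$. Scaling this new column by $d$ gives $(a-c, d, -b)$.

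For a loop $e$ at $v$ with $\psi(e,v,v) = (a,b)$, the new gain is the conjugate $(c,d)^{-1}(a,b)(c,d) = (d^{-1}(a + bc - c), b)$. The column changes from $(a, 1-b)$ in rows $(v_0,v)$ to $(d^{-1}(a-c(1-b)), 1-b)$.

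Edges not incident with $v$ are unchanged.

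Next we look for one row operation that accounts for all of these changes. Apply to $A(D,\psi)$ the following two operations: first replace row $v_0$ by row $v_0 - c\cdot$(row $v$), and then multiply row $v$ by $d$.

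Check each type of column under these operations. For non-loops at $v$, the column becomes $(a-c, d, -b)$, which is $d$ times the new column. For loops at $v$, it becomes $(a - c(1-b), d(1-b))$, which is $d$ times the new loop column. For edges not incident with $v$, row $v$ has entry $0$, so nothing changes.

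It follows that, after column scaling by $d^{-1}$ on the edges incident with $v$, the two matrices agree. Hence they are projectively equivalent.

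The main obstacle is bookkeeping: getting the semidirect-product inverse and conjugation formulas right, and confirming that a single row operation handles non-loop and loop columns simultaneously. The loop case is the delicate one, since the entry $1-b$ must be preserved up to the same scalar $d$. The check above shows that it is.
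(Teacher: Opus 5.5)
Your proof is correct and follows essentially the same route as the paper. You reduce to switching at a single vertex $v$ by $(c,d)$, fix a convenient orientation via Lemma~\ref{lem: reorientation and projective equivalence}, then add $-c$ times row $v$ to row $v_0$, multiply row $v$ by $d$, and finish with column scaling. The only difference is cosmetic: the paper orients the non-loops at $v$ toward $v$, so those columns match exactly and only loop columns need scaling by $d^{-1}$, whereas orienting away from $v$ as you do requires scaling every column incident with $v$.
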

\begin{proof}
Let $A = A(D, \psi)$ and let $A' = A(D, \psi')$.
Let $v \in V(G)$ and let $(c, d) \in \bF^+ \rtimes \bF^{\times}$.
It suffices to show that if $\psi'$ is obtained from $\psi$ by switching at $v$ with value $(c,d)$, then $A$ and $A'$ are projectively equivalent.
By Lemma \ref{lem: reorientation and projective equivalence} we may assume that every non-loop of $G$ incident with $v$ is oriented towards $v$ in $D$.
Let $A_1$ be obtained from $A$ by the following sequence of operations:
\begin{itemize}
    \item Add $-c$ times row $v$ to row $v_0$.
    \item Multiply row $v$ by $d$.
    \item If $e$ is a loop at $v$, multiply column $e$ by $d^{-1}$.
\end{itemize}
We will show that $A_1 = A'$ by showing that for each edge $e$ of $G$, the columns of $A_1$ and $A'$ indexed by $e$ are equal.
Let $A_1[e]$ and $A'[e]$ be the columns of $A_1$ and $A'$, respectively, indexed by $e$.
If $e$ is not incident with $v$ then $A_1[e] = A[e] = A'[e]$, so we may assume that $e$ is incident with $v$.
First assume that $e$ is not a loop.
Let $u$ be the end of $e$ other than $v$, and let $\psi(e, u, v) = (a, b)$.
Then $A_1[e]$ has entry $a + bc$ in row $v_0$, entry $1$ in row $u$, entry $-bd$ in row $v$, and zeroes in all other rows.
Since $\psi'(e, u, v) = (a, b) \circ (c, d) = (a + bc, bd)$, we see that $A_1[e] = A'[e]$.
Next assume that $e$ is a loop at $v$.
Then $A_1[e]$ has entry $d^{-1}(a - c(1 - b))$ in row $v_0$, entry $1 - b$ in row $v$, and zeroes in all other rows.
Since 
\begin{align*}
\psi'(e, u, v) &= (c, d)^{-1} \circ (a, b) \circ (c, d) \\
&= (-d^{-1}c, d^{-1}) \circ (a + bc, bd) \\
& = (-d^{-1}c + d^{-1}(a + bc), d^{-1}bd) \\
&= (d^{-1}(a - c(1 - b)), b),
\end{align*}
we see that $A_1[e] = A'[e]$.
\end{proof}

We can now show that $A(D, \psi)$ represents the matroid $M(\bF^{+} \rtimes \bF^{\times}, \bF^+, \cA, G, \psi)$ over $\bF$.

\begin{theorem} \label{thm: matrix representation}
Let $\bF$ be a field, let $(G, \psi)$ be an $(\bF^{+} \rtimes \bF^{\times})$-gain graph, and let $D$ be an orientation of $G$.
Then $A(D, \psi)$ represents $M(\bF^{+} \rtimes \bF^{\times}, \bF^+, \cA, G, \psi)$ over $\bF$.
\end{theorem}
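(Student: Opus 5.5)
Since $M = M(\bF^{+} \rtimes \bF^{\times}, \bF^+, \cA, G, \psi)$ is defined through Theorem \ref{thm: brylawski}, I will compare rank functions: for every $X \subseteq E(G)$, the rank of the columns of $A(D,\psi)$ indexed by $X$ should equal $|V(G[X])| - b(X) + l(X)$ from Proposition \ref{prop: rank function}. By Lemmas \ref{lem: reorientation and projective equivalence} and \ref{lem: switching and projective equivalence}, the vector matroid of $A(D,\psi)$ depends only on the switching class of $\psi$. The same holds for $M$, since $\cB_\psi$, $\cC$ and the frame matroid are switching invariant. I may also restrict to $X$ with $G[X]$ connected, because both rank functions are additive over components. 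For the column matrix, the top row $v_0$ is the only row shared by different components, so I will need care here: the rank is the sum of the frame ranks of the components plus $1$ exactly when the $v_0$ row is not already in the row span on some component. This matches the global definition of $l(X)$. Accordingly, I will first fix a spanning tree $T$ of $G[X]$ and switch (Lemma \ref{lem: normalize a forest}) so that $T$ is $\psi$-normalized. Then every tree column is $e_v - e_w$ with a zero in row $v_0$.

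The first step is to identify the lower part of the matrix. Deleting row $v_0$ leaves an $\bF^{\times}$-frame matrix for the gain graph $(G, \psi/\bF^+)$. The column of an edge with gain $(a,b)$ is $e_v - b e_w$, and a loop column is $(1-b)e_v$. It is standard that this represents $F(G,\psi/\bF^+)$, so the submatrix on $X$ has rank $r_N(X) = |V(G[X])| - b(X)$. The full matrix therefore has rank $r_N(X)$ or $r_N(X)+1$. By Theorem \ref{thm: brylawski} it then suffices to show one fact: the rank goes up by one exactly when some circuit of $N|X$ lies outside $\cC$. Equivalently, a circuit $C$ of $N$ is dependent in $A(D,\psi)$ if and only if $C \in \cC$. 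I will argue this carefully, since the vector matroid of a one-row extension is the elementary lift given by the linear class of $N$-circuits that remain dependent. The extra element is the vector $e_{v_0}$, and a one-row extension is exactly $K\del e$ with $K/e = N$.

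The heart of the proof is a case check on circuits of $N$ with a normalized spanning tree. In the cycle case, let $C$ be a $(\psi/\bF^+)$-balanced cycle with $T$ normalized and one extra edge $f$ of gain $(a,1)$. The only $N$-dependency sets the tree columns and $f$ with coefficients that kill the lower rows. The $v_0$ entry of this combination is $\pm a$, so $C$ is dependent exactly when $a = 0$, that is, exactly when $C$ is $\psi$-balanced. In the handcuff and theta case there are two non-tree edges $e,f$ with gains $(a,b)$ and $(c,d)$, where $b,d \neq 1$. The unique lower-row dependency has the form $\lambda\,\mathrm{col}(e) + \mu\,\mathrm{col}(f) + (\text{tree columns})$. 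To compute $\lambda, \mu$, I will contract the tree mentally so that every vertex is identified. Then $e$ and $f$ become loops with lower entries $1-b$ and $1-d$, so $\lambda(1-b) + \mu(1-d) = 0$. The $v_0$ entry is $\lambda a + \mu c$, and it vanishes if and only if $a/(1-b) = c/(1-d)$. By Example \ref{ex: infinite field affine transformations} this is exactly the condition that $(a,b)$ and $(c,d)$ lie in the same $A \in \cA$. Finally, Lemma \ref{lem: equivalences for the linear class}(ii) says this is equivalent to $C \in \cC$.

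The main obstacle is to justify this contraction heuristic rigorously. A normalized tree path contributes only $e_v - e_w$ columns, which have zero $v_0$ entry. Adding multiples of these columns moves the lower-row support of $e$ and $f$ onto a single vertex without changing the $v_0$ entries. I will verify this by row-reducing along $T$. Separately, I will check edge cases: loops with $b=1$, $a\ne0$ are unbalanced $N$-cycles of the first kind, and loops with $b \ne 1$ are handled by the formula above.
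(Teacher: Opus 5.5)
Your proposal is correct and follows the paper's proof. Both arguments identify the lower part of $A(D,\psi)$ with Zaslavsky's frame representation of $N=F(G,\psi/\bF^+)$, use Theorem~\ref{thm: brylawski} to reduce to showing that an $N$-circuit is dependent in $A(D,\psi)$ exactly when it lies in $\cC$, normalize a spanning tree via Lemmas~\ref{lem: reorientation and projective equivalence} and~\ref{lem: switching and projective equivalence}, and check the cycle and handcuff/theta cases against the criterion $a/(1-b)=c/(1-d)$. The differences are minor: you get $\lambda(1-b)+\mu(1-d)=0$ in one step by summing all lower rows (tree columns contribute zero), where the paper writes out separate coefficient equations for tight handcuffs, loose handcuffs and thetas; and your opening remark that both rank functions are additive over components is false, since $l(X)$ and row $v_0$ are global, but you never use it once you pass to the linear-class criterion.
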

\begin{proof}
Let $A = A(D, \psi)$, and let $A'$ be obtained from $A$ by deleting the row indexed by $v_0$.
Then $M[A]$ is an elementary lift of $M[A']$.
Zaslavsky \cite[Theorem 2.1]{Zaslavsky2003} proved that $A'$ represents the underlying frame matroid $N = F(G, \psi/\bF^+)$ over $\bF$.
Therefore $M[A]$ is an elementary lift of $N$.
Let $\cC'$ be the linear class of circuits of $N$ that are also circuits of $M[A]$, and let $\cC = \cC(\bF^{+} \rtimes \bF^{\times}, \bF^+, \cA, G, \psi)$.
We will show that $\cC' = \cC$.
Let $C$ be a circuit of $N$, and let $T$ be a spanning tree of $G[C]$.
Since switching does not change $M$ (since $\cC$ is invariant under switching by definition) or $M[A]$ (by Lemma \ref{lem: switching and projective equivalence}) we may assume that $T$ is $\psi$-normalized.

First suppose that $C$ is a cycle of $G$.
Let $f$ be the unique edge in $C - T$.
If $C \in \cC$, then $C$ is $\psi$-normalized, and so for each $e \in C$ the entry of $A$ in row $v_0$ and column $e$ is $0$.
Then clearly $C$ has the same rank in $N$ and $M[A]$, so $C \in \cC'$.
Conversely, if $C \in \cC'$, then $C$ has the same rank in $N$ and $M[A]$ if and only the entry of $A$ in row $v_0$ and column $f$ is $0$.
Therefore $C$ is $\psi$-normalized, so $C \in \cC$.

Next suppose that $C$ is a handcuff or a theta graph.
Let $f$ and $g$ be the two edges in $C - T$.
Let $(a,b) \in \psi(f)$ and let $(c,d) \in \psi(g)$.
Recall from Example \ref{ex: infinite field affine transformations} that  $C \in \cC$ if and only if $a/(1 - b) = c/(1 - d)$.
Note that $C \in \cC'$ if and only if the columns of $A[C]$ are linearly dependent.
Therefore the following claim shows that $C \in \cC'$ if and only if $C \in \cC$.

\begin{claim}
The columns of $A[C]$ are linearly dependent if and only if $a/(1-b) = c/(1-d)$.
\end{claim}
\begin{proof}
Note that if $C'$ is a proper subset of $C$, then $C'$ is an independent set in $N$ and therefore the columns of $A'[C']$ are linearly independent, which implies that the columns of $A[C']$ are linearly independent.
So the columns of $A[C]$ are linearly independent if and only if there is a linear combination $\sum_{e \in C} x_e  \cdot A[e]$ which is equal to zero with each $x_e$ nonzero.
We will consider three cases depending on whether $C$ is a tight handcuff, a loose handcuff, or a theta graph.

First suppose that $C$ is a tight handcuff.
Let $u$ be the vertex of $G[C]$ with degree greater than $2$.
We may assume that $f$ and $g$ are incident with $u$.
Let $P_1$ and $P_2$ be the two cycles in $G[C]$ so that $f \in P_1$ and $g \in P_2$.
By reorienting $D$ we may assume that $P_1$ and $P_2$ form directed cycles in $D$ so that $f$ and $g$ are oriented away from $u$ if they are non-loops.
Let $(a,b)$ and $(c,d)$ be the $\psi$-values of the orientations of $f$ and $g$, respectively, away from $u$.
The columns of $A[C]$ are linearly independent if and only if there is a linear combination $\sum_{e \in C} x_e  \cdot A[e]$ which is equal to zero with each $x_e$ nonzero.
This is true if and only if each of the following holds, where $(iv)$ uses $(ii)$ and $(iii)$:
\begin{enumerate}[label=$(\roman*)$]
    \item $x_fa + x_gc = 0$. (Dependence in row $v_0$.)

    \item $x_e = bx_f$ for each $e \in P_1 - f$. (Dependence in the rows in $V(P_1) - \{u\}$.)

    \item $x_e = dx_g$ for each $e \in P_2 - g$. (Dependence in the rows in $V(P_2) - \{u\}$.)

     \item $x_f(1 - b) + x_g(1 - d) = 0$. (Dependence in row $u$, even if $f$ or $g$ is a loop of $G$.)
\end{enumerate}
If $a/(1-b) = c/(1-d)$, then we can set $(x_f, x_g) = (c, -a)$, set $x_e = bc$ if $e \in P_1 - f$, and set $x_e = -ad$ if $e \in P_2 - g$, and we see that equations $(i)$--$(iv)$ hold.
Conversely, suppose that equations $(i)$--$(iv)$ hold.
By $(i)$ we see that $-x_g/x_f = a/c$ and by $(iv)$ we see that $-x_g/x_f = (1 - b)/(1 - d)$, so $a/(1-b) = c/(1 - d)$, as desired.

In the second case suppose that $C$ is a loose handcuff.
Let $u$ and $v$ be the vertices of $G[C]$ with degree greater than $2$.
We may assume that $f$ is incident with $u$ and $g$ is incident with $v$.
Let $P_1$ and $P_2$ be the two cycles in $G[C]$ so that $f \in P_1$ and $g \in P_2$, and let $P_3$ be the path in $G[C]$ between $u$ and $v$.
Let $h$ be the edge of $P_3$ incident with $u$.
By reorienting $D$ we may assume that $P_1$ and $P_2$ form directed cycles in $D$ so that $f$ and $g$ are oriented away from $u$ and $v$, respectively (if they are non-loops), and $P_3$ is a directed path from $u$ to $v$.
Let $(a,b)$ and $(c,d)$ be the $\psi$-values of the orientations of $f$ and $g$ away from $u$ and $v$, respectively.
The columns of $A[C]$ are linearly independent if and only if there is a linear combination $\sum_{e \in C} x_e  \cdot A[e]$ which is equal to zero with each $x_e$ nonzero.
This is true if and only if each of the following holds, where $(v)'$ uses $(ii)'$, and $(vi)'$ uses $(iii)'$ and $(iv)'$:
\begin{enumerate}[label=$(\roman*)'$]
    \item $x_fa + x_gc = 0$. (Dependence in row $v_0$.)

    \item $x_e = bx_f$ for each $e \in P_1 - f$. (Dependence in the rows in $V(P_1) - \{u\}$.)

    \item $x_e = dx_g$ for each $e \in P_2 - g$. (Dependence in the rows in $V(P_2) - \{u\}$.)

    \item $x_e = x_h$ for each $e \in P_3 - h$. (Dependence in the rows in $V(P_3) - \{h\}$.)

     \item $x_f(1 - b) + x_h = 0$. (Dependence in row $u$, even if $f$ is a loop of $G$.)

     \item $x_g(1 - d) - x_h = 0$. (Dependence in row $v$, even if $g$ is a loop of $G$.)
\end{enumerate}
If $a/(1-b) = c/(1-d)$, then we can set $(x_f, x_g, x_h) = (c, -a, -a(1 - d))$, set $x_e = bc$ if $e \in P_1 - f$, set $x_e =-ad$ if $e \in P_2 - g$, and set $x_e = x_h$ if $e \in P_3 - h$, and we see that equations $(i)'$--$(vi)'$ hold.
Conversely, suppose that equations $(i)'$--$(vi)'$ hold.
By $(i)'$ we see that $-x_g/x_f = a/c$.
By summing $(v)'$ and $(vi)'$ we see that $x_f(1 - b) + x_g(1 - d) = 0$, so $-x_g/x_f = (1 - b)/(1 - d)$, and therefore $a/(1 - b) = c/(1 - d)$, as desired.

Finally, suppose that $C$ is a theta graph.
Let $u$ and $v$ be the vertices of $G[C]$ with degree $3$.
We may assume that $f$ and $g$ are incident with $u$ and that the $\psi$-values of the orientations of $f$ and $g$ away from $u$ are $(a,b)$ and $(c,d)$, respectively.
Let $P_1$, $P_2$, and $P_3$ be the three paths in $G[C]$ from $u$ to $v$ so that $f \in P_1$ and $g \in P_2$.
Let $h$ be the edge of $P_3$ that is incident with $u$.
We may assume that $D$ is oriented so that each $P_i$ is a directed path from $u$ to $v$ in $D$.
The columns of $A[C]$ are linearly independent if and only if there is a linear combination $\sum_{e \in C} x_e  \cdot A[e]$ which is equal to zero with each $x_e$ nonzero.
This is true if and only if each of the following holds:
\begin{enumerate}[label=$(\roman*)''$]
    \item $x_fa + x_gc = 0$. (Dependence in row $v_0$.)

    \item $x_f + x_g + x_h = 0$. (Dependence in row $u$.)

    \item $x_e = bx_f$ for each $e \in P_1 - f$. (Dependence in the rows in $V(P_1) - \{u,v\}$.)

    \item $x_e = dx_g$ for each $e \in P_2 - g$. (Dependence in the rows in $V(P_2) - \{u,v\}$.

    \item $x_e = x_h$ for each $e \in P_3 - h$. (Dependence in the rows in $V(P_3) - \{u,v\}$.)

    \item $-bx_f - dx_g - x_h = 0$. (Dependence in row $v$.)
\end{enumerate}
If $a/(1-b) = c/(1-d)$, then we can set $(x_f, x_g, x_h) = (c, -a, a - c)$, set $x_e = bc$ if $e \in P_1 - f$, set $x_e = -ad$ if $e \in P_2 - g$, and set $x_e = a - c$ if $e \in P_3 - h$, and we see that equations $(i)''$--$(vi)''$ all hold.
Conversely, suppose that equations $(i)''$--$(vi)''$ all hold.
By $(i)''$ we see that $-x_g/x_f = a/c$.
By summing $(ii)''$ and $(vi)''$ we see that $x_f(1 - b) + x_g(1 - d) = 0$, so $-x_g/x_f = (1 - b)/(1 - d)$.
Therefore $a/b = (1-b)/(1-d)$, so $a/(1 - b) = c/(1 - d)$, as desired.
\end{proof}

For all circuits $C$ of $N$ we have shown that $C \in \cC$ if and only if $C \in \cC'$.
Therefore $\cC = \cC'$ and it follows that $M(\bF^{+} \rtimes \bF^{\times}, \bF^+, \cA, G, \psi) = M[A]$.
\end{proof}

Theorem \ref{thm: matrix representation} recovers several known results in special cases.
Let $M = M(\bF^{+} \rtimes \bF^{\times}, \bF^+, \cA, G, \psi)$.
If the image of $\psi$ is the identity of $\bF^+\rtimes \bF^{\times}$, then $M$ is the graphic matroid of $G$ and $A(D, \psi)$ is simply the incidence matrix of $D$, which is known to represent $M$ (see \cite[Section 5.1]{Oxley2011}).
If $\psi$ is an $\bF^+$-gain function for $G$, then $M$ is the lift matroid of $(G, \psi)$, and Zaslavsky \cite[Theorem 4.1]{Zaslavsky2003} proved that $A(D, \psi)$ represents $M$.
Similarly, if $\psi$ is an $\bF^{\times}$-gain function for $G$, then $M$ is the frame matroid of $(G, \psi)$, and Zaslavsky  \cite[Theorem 2.1]{Zaslavsky2003} proved that $A(D, \psi)$ represents $M$.
Finally, if $\bF$ does not have characteristic $2$ and the image of $\psi$ lies in the subgroup $\bF^+ \rtimes \{1,-1\}^{\times}$, then Anderson, Su, and Zaslavsky \cite[Section 3]{Anderson-Su-Zaslavsky2024} proved that $A(D, \psi)$ represents $M$.
This special case also arises in work of Tanigawa \cite[Section 9]{Tanigawa-2015}, who constructed the matrix $A(D, \psi)$ from $(G, \psi)$ as a special case of a more general construction with applications in rigidity theory.
(Tanigawa's construction uses a subgroup $\Gamma = \Gamma_2 \rtimes \Gamma_1$ of $\bF^d \rtimes GL(\bF^d)$ and a bilinear form $b \colon \bF^d \rtimes \bF^d \to \bF^k$ with $k \ge 1$ such that $b(gx, y) = b(x, g^{-1}y)$ for all $x,y \in \bF^d$ and all $g \in \Gamma_1$.
When $d = 1$, $GL(\bF) \cong \bF^{\times}$, and the condition that $b(gx, y) = b(x, g^{-1}y)$ implies that $g^{-1} = g$ for all $g \in \Gamma_1$.
Therefore $\Gamma_1$ is either trivial, or $\Gamma_1 = \{1,-1\}^{\times}$.)

There is a beautiful theory, conjectured by Zaslavsky \cite{Zaslavsky2003} and proved by Funk, Pivotto, and Slilaty \cite{Funk-Pivotto-Slilaty-2022}, about the relationship between representations of a frame matroid $F(G, \cB)$ or lifted-graphic matroid $L(G, \cB)$ over a field and switching classes of gain functions over that field that give rise to $(G, \cB)$.
We conjecture that much of this theory extends to representations of elementary lifts of frame matroids.
We will first need to extend the concept of scaling from $\bF^+$-gain functions to $(\bF^+ \rtimes \bF^{\times})$-gain functions.

\begin{definition} \label{def: scaling}
Let $G$ be a graph, let $\bF$ be a field, and let $\psi$ and $\psi'$ be two $(\bF^+ \rtimes \bF^{\times})$-gain functions for $G$.
We say that $\psi$ and $\psi'$ are \emph{scaling equivalent} if there is some $c \in \bF^{\times}$ so that for every oriented edge $(e, u, v)$, if $\psi(e, u, v) = (a, b)$ then $\psi'(e, u, v) = (ac, b)$.
If this is the case then we write $\psi' = c\cdot \psi$.
If $\psi''$ can be obtained from $\psi$ by a sequence of scalings and vertex switchings, then $\psi$ and $\psi''$ are \emph{switching-and-scaling equivalent}.
\end{definition}

Note that if $c \in \bF^{\times}$ and $D$ is any orientation of $G$, then $A(D, \psi)$ and $A(D, c \cdot \psi)$ are projectively equivalent, because $A(D, c \cdot \psi)$ is obtained from $A(D, \psi)$ by multiplying row $v_0$ by $c$.
It then follows from Lemma \ref{lem: switching and projective equivalence} that if $\psi$ and $\psi'$ are switching-and-scaling equivalent, then $A(D, \psi)$ and $A(D, \psi')$ are projectively equivalent, so $M(\bF^{+} \rtimes \bF^{\times}, \bF^+, \cA, G, \psi) = M(\bF^{+} \rtimes \bF^{\times}, \bF^+, \cA, G, \psi')$ by Theorem \ref{thm: matrix representation}.
We conjecture that the converse is true as well.

\begin{conjecture} \label{conj: equivalence for canonical matrices}
Let $\bF$ be a field, let $G$ be a loopless graph with orientation $D$, and let $\psi$ and $\psi'$ be $(\bF^+\rtimes \bF^{\times})$-gain functions for $G$ so that $M(\bF^+ \rtimes \bF^{\times}, \bF^+, \cA, G, \psi) = M(\bF^+ \rtimes \bF^{\times}, \bF^+, \cA, G, \psi')$, and this matroid is $3$-connected and has rank at least $3$.
Then $A(D, \psi)$ and $A(D, \psi')$ are projectively equivalent if and only if $\psi$ and $\psi'$ are switching-and-scaling equivalent. 
\end{conjecture}

If true, this would imply Theorem 1 from \cite{Funk-Pivotto-Slilaty-2022}, which proves the special cases that $\psi$ and $\psi'$ are both $\bF^{\times}$-gain functions or both $\bF^+$-gain functions.
The conditions that $G$ is loopless and $M$ is $3$-connected and has rank at least $3$ are necessary for the same reasons as described in \cite{Funk-Pivotto-Slilaty-2022}.

Let $M = M(\bF^+ \rtimes \bF^{\times}, \bF^+, \cA, G, \psi)$.
So far we have only considered representations of $M$ given by incidence matrices of $(\bF^+ \rtimes \bF^{\times})$-gain graphs.
Up to projective equivalence, are these the only $\bF$-representations of $M$?
We conjecture an affirmative answer.
In fact, we conjecture that this is true for any $3$-connected, $\bF$-representable elementary lift of a frame matroid.

\begin{conjecture} \label{conj: representable implies incidence matrix of gain function}
Let $(G, \cB)$ be a biased graph, let $M$ be an elementary lift of the frame matroid $F(G, \cB)$ so that $M$ is $3$-connected, and let $\bF$ be a field.
If $A$ is a matrix representing $M$ over $\bF$, then there is an $(\bF^+ \rtimes \bF^{\times})$-gain function $\psi$ for $G$ so that $F(G, \cB) = F(G, \psi/\bF^+)$, $M = M(\bF^+ \rtimes \bF^{\times}, \bF^+, \cA, G, \psi)$, and $A$ is projectively equivalent to $A(D, \psi)$ for any orientation $D$ of $G$.
\end{conjecture}

If true, this would be an analogue of Theorem 2 from \cite{Funk-Pivotto-Slilaty-2022}.
Together, Conjectures \ref{conj: equivalence for canonical matrices} and \ref{conj: representable implies incidence matrix of gain function} would imply that for every $3$-connected elementary lift $M$ of a frame matroid $F(G, \cB)$ with $G$ loopless and every field $\bF$, there is a one-to-one correspondence between projective equivalence classes of $\bF$-representations of $M$ and switching-and-scaling classes of $(\bF^{\times} \times \bF^+)$-gain functions for $G$ that give rise to $M$.
The conjectures will likely be approachable using the techniques from \cite{Funk-Pivotto-Slilaty-2022}.
We leave this as an open direction for future research.

\section{A partial converse} \label{sec: converse}

In this section we will prove Theorem \ref{thm: main converse}.
Recall that $(K_n^{\Gamma}, \psi_n^{\Gamma})$ is the complete $n$-vertex $\Gamma$-gain graph as defined in Section \ref{sec: gain graphs}, and $\alpha_{ij}$ denotes the edge oriented from vertex $i$ to vertex $j$ and labeled by $\alpha$.
The setting is that we have a finite group $\Gamma$ with a normal subgroup $\Gamma_1$, and an elementary lift $M$ of the  $(\Gamma/\Gamma_1)$-frame matroid $F(K_n^{\Gamma}, \psi_n^{\Gamma}/\Gamma_1)$ so that a cycle of $K_n^{\Gamma}$ is a circuit of $M$ if and only if it is $\psi_n^{\Gamma}$-balanced.
We will show that $M$ arises from Definition \ref{def: the matroid}.

We will begin by showing that the linear class for $M$ as an elementary lift of $F(K_n^{\Gamma}, \psi_n^{\Gamma}/\Gamma_1)$ is invariant under the natural action of switching on the edge set of $K_n^{\Gamma}$.
More precisely, if $\eta \colon [n] \to \Gamma$ is a switching function, then $\eta$ induces a permutation of the edge set of $K_n^{\Gamma}$ where the edge $\alpha_{ij}$ with $i < j$ maps to the edge $(\eta^{-1}(v_i) \circ \alpha \circ \eta(v_j))_{ij}$.
For $X \subseteq E(K_n^{\Gamma})$ we write $\bar{\eta}(X)$ for the image of $X$ under the permutation of $E(K_n^{\Gamma})$ induced by $\eta$.
Note that the hypotheses of the following lemma are weaker than those of Theorem \ref{thm: main converse}: we allow unbalanced cycles to be in the linear class.

\begin{lemma} \label{lem: switching action}
Let $\Gamma$ be a finite group with a normal subgroup $\Gamma_1$ and let $n \ge 3$ be an integer.
Let $M$ be an elementary lift of the frame matroid $F(K_n^{\Gamma}, \psi_n^{\Gamma}/\Gamma_1)$ corresponding to a linear class $\cC$ so that every $\psi_n^{\Gamma}$-balanced cycle of $K_n^{\Gamma}$ is in $\cC$.
Then $\cC$ is invariant under the action of switching on $E(K_n^{\Gamma})$.
\end{lemma}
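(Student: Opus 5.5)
The plan is to reduce to switching at a single vertex and then realize that single switch by a sequence of modular-pair moves against balanced triangles, which lie in $\cC$ by hypothesis.

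\emph{Reductions.} Every switching function is a composition of switchings at single vertices, and the induced permutations of $E(K_n^{\Gamma})$ compose accordingly. So it suffices to treat $\eta$ with $\eta(v_i)=\gamma$ and $\eta(v_j)=\ep$ for $j\ne i$. First I will note that $\bar\eta$ is an automorphism of $N=F(K_n^{\Gamma},\psi_n^{\Gamma}/\Gamma_1)$. It fixes the ends of every edge, and switching by $\eta$ (hence also by its image in $\Gamma/\Gamma_1$) preserves balance of every cycle. Hence $\bar\eta$ maps circuits of $N$ to circuits of $N$ of the same type. Balanced cycles are in $\cC$ by hypothesis and $\bar\eta$ maps them to balanced cycles. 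So the content is the following: if $C\in\cC$ is an unbalanced cycle, handcuff or theta, then $\bar\eta(C)\in\cC$. Applying this also to $\eta^{-1}$ gives the reverse inclusion, so $\cC$ is invariant.

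\emph{Main tool (rerouting through a balanced triangle).} Let $C\in\cC$ and let $e=\alpha_{ij}$ be an edge of $C$ incident with $v_i$. Choose a vertex $v_k\notin V(G[C])$ and edges $x$ from $v_i$ to $v_k$ and $y$ from $v_k$ to $v_j$ so that $D=\{e,x,y\}$ is a $\psi_n^{\Gamma}$-balanced triangle. Such edges exist because $K_n^{\Gamma}$ contains every gain value between every pair of vertices. Since $v_k$ is new, $C\cup D$ has exactly two more edges than $C$ and exactly one more vertex, so $|C\cup D|-r_N(C\cup D)=2$. Thus $(C,D)$ is a modular pair of members of $\cC$, and the circuit $(C-e)\cup\{x,y\}$ of $N$ lies in $\cC$. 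Two such moves let me replace $e=\alpha_{ij}$ by $e'=\bar\eta(e)=(\gamma^{-1}\alpha)_{ij}$. First reroute $e$ through $v_k$ using edges $x,y$. Then reroute back, using the balanced triangle $\{e',x',y\}$ with $x'=\bar\eta(x)$; here I use that $\bar\eta$ preserves balance, so this triangle is also balanced. I will choose the gains on $x$ so that the intermediate circuit meets $v_i$ only through edges that are later replaced. Doing this for every edge of $C$ at $v_i$ in turn (at most four such edges) gives $\bar\eta(C)\in\cC$, provided there is a free vertex $v_k$. A loop at $v_i$ is handled similarly: I first use a balanced triangle to trade it for a pair of parallel edges through a free vertex.

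\emph{Main obstacle.} The hard case is when $C$ already uses all vertices, or when no vertex is free at some step, since $n\ge3$ is small. My first attempt will be to choose $v_k$ inside $V(G[C])$ but on a part of $C$ where adding a balanced triangle sharing an edge with $C$ still raises the nullity by only one. A triangle sharing the path $v_i,e,v_j$ and one further vertex of $C$ typically creates a theta structure with only one new independent cycle, so the modular-pair condition can still be arranged. Failing that, I will apply the linear-class property with a balanced cycle lying inside $G[C]\cup\{x\}$. This lets me first shrink or reshape $C$, for instance turning a theta into a handcuff or shortening a path, until some vertex is freed. Then I apply the rerouting argument and finally reverse the reshaping steps, which are themselves modular-pair moves against balanced cycles and so commute with $\bar\eta$. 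Checking the nullity count in each configuration (tight handcuff, loose handcuff, theta, with $v_i$ of degree $2$, $3$ or $4$) is the routine but delicate part.
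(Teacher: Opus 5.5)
Your proposal has a genuine gap: the rerouting move you rely on cannot perform a switch at $v_i$.

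\textbf{Where the main tool fails.} Your first rerouting move is sound. It is a modular-pair move against the balanced triangle $\{e,x,y\}$, and it gives $C_1=(C-e)\cup\{x,y\}\in\cC$. The second move is not.

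\begin{itemize}
\item The triangle $\{e',x',y\}$ contains $x'=\bar\eta(x)$, but $C_1$ contains $x$. When $C$ is a handcuff or theta, $C_1\cup\{e',x'\}$ has nullity $3$ in $N$, so $(C_1,\{e',x',y\})$ is not a modular pair.
\item Getting from $x$ to $x'$ is itself a switching at $v_i$ of an edge incident with $v_i$, so the argument is circular. Rerouting through a free vertex leaves $v_i$ in the circuit with the same degree and unswitched gains.
\item The underlying reason: a move that replaces a path having $v_i$ as an \emph{end} by a path of the same gain preserves the gains of walks leaving $v_i$. Switching at $v_i$ changes them; it left-multiplies the gain of every walk from $v_i$ to another vertex by $\gamma^{-1}$.
\item The edge-by-edge plan also fails on its own terms. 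The partially switched set $(C-e)\cup\{e'\}$ is not a switching image of $C$ and need not be a circuit of $N$. For example, if $e,f\in C$ are parallel edges with gains $\alpha,\beta$ out of $v_i$ and $\gamma=\alpha\beta^{-1}$, then the digon $\{e',f\}$ is balanced.
\item Your last paragraph gives no actual argument for the case with no free vertex. That is the typical case, since circuits of $K_n^{\Gamma}$ can use all $n\ge 3$ vertices.
\end{itemize}

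\textbf{The missing idea.} A switch at $v_i$ can only be realized by replacing a path in which $v_i$ is an \emph{internal} vertex, so you should first lower the degree of $v_i$ to $2$. This is the paper's argument.

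\begin{itemize}
\item Degree $2$, edges forming a path. Suppose the two edges at $v_i$ form a path $P$ from $v_a$ to $v_b$. Replace $P$ by the single edge $P''$ from $v_a$ to $v_b$ with the same gain; this is a modular pair with the balanced triangle $P\cup P''$, and the intermediate circuit avoids $v_i$. Then replace $P''$ by $\bar\eta(P)$, which has the same gain because $v_i$ is internal.
\item Degree $2$, edges forming a digon. One circuit elimination with a chord reduces this to the previous case.
\item Degree at least $3$. Add a chord $e$ joining two vertices of $C$ other than $v_i$ that lies in a balanced cycle $C_1$ through $v_i$. Take $f\in C_1$ at $v_i$ and a circuit $C_2\subseteq (C\cup e)-f$. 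Then $C_2\in\cC$ has smaller degree at $v_i$, and $(C_1,C_2)$ is a modular pair covering $C$. By induction $\bar\eta(C_2)\in\cC$, and $\bar\eta(C_1)$ is balanced, so linearity gives $\bar\eta(C)\in\cC$.
\end{itemize}

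This argument needs no free vertex. Your remark that reshaping moves against balanced cycles ``commute with $\bar\eta$'' is exactly what drives the inductive step. You aimed it at freeing a vertex for a move that cannot perform the switch, rather than at reducing the degree of $v_i$.
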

\begin{proof}
We will write $\circ$ and $\ep$ for the operation and identity, respectively, of $\Gamma$.
We will write $G$ for $K_n^{\Gamma}$ and $\psi$ for $\psi_n^{\Gamma}$, for convenience.
Let $N$ denote the underlying frame matroid $F(G, \psi/\Gamma_1)$.
We first prove a key claim about replacing a subpath with a path of the same $\psi$-value.

\begin{claim} \label{claim: replace a path}
Let $C \in \cC$ so that $C$ is not a cycle, let $P$ be a subpath of $C$ between distinct vertices $v_i$ and $v_j$ so that each internal vertex has degree $2$ in $C$, and let $V$ be the set of vertices of $C$ that are not internal vertices of $P$.
Let $P'$ be a path in $K_n^{\Gamma}$ from $v_i$ to $v_j$ so that no internal vertex of $P'$ is in $V$, and the walks along $P$ and $P'$ from $v_i$ to $v_j$ have the same value in $\Gamma$. 
If $C'$ is obtained from $C$ by replacing $P$ with $P'$, then $C' \in \cC$.
\end{claim}
\begin{proof}
First note that since each internal vertex of $P$ has degree $2$, the circuits $C$ and $C'$ are of the same type: a tight handcuff, loose handcuff, or a theta graph.
Since $C'$ contains no balanced cycles because the walks in $P$ and $P'$ from $v_i$ to $v_j$ have the same value in $\Gamma$ it follows that $C'$ is a circuit of $N$.
We now consider two cases.
First suppose that either $P$ or $P'$ consists of a single edge.
Then $P \cup P'$ is a $\psi$-balanced cycle.
Without loss of generality, we may assume that $P'$ consists of a single edge.
Consider the set $C \cup P'$.
The graph $G[C \cup P']$ has cyclomatic number $3$ and contains an unbalanced cycle, so $|C \cup P'| - r_N(C \cup P') \le 2$.
Since $C \cup P'$ is the union of the circuit $C$ and the $\psi$-balanced cycle $P \cup P'$, and $\cC$ is a linear class, it follows that every circuit of $N$ contained in $C \cup P'$ is in $\cC$, so $C' \in \cC$.
In the second case, suppose that $P$ and $P'$ each have at least two edges.
Let $P''$ be the edge oriented from $v_i$ and $v_j$ with the same $\psi$-value as the $v_i$-$v_j$ walks in $P$ and $P'$.
Let $C_1$ be obtained from $C$ by replacing $P$ with $P''$.
By the previous case, $C_1 \in \cC$.
Then $C'$ is obtained from $C_1$ by replacing $P''$ with $P'$, and once again by the previous case we have $C' \in \cC$ because $C_1 \in \cC$.
\end{proof}

Now let $C \in \cC$, and let $\eta \colon [n] \to \Gamma$ be a switching function for $(G, \psi)$.
Clearly $\bar{\eta}(C)$ is a circuit of $N$ because switching preserves graph isomorphism (so $\bar{\eta}(C)$ has the same type as $C$: cycle, tight handcuff, loose handcuff, or theta graph) and the set of $\psi$-balanced cycles.
If $C$ is a $\psi$-balanced cycle then so is $\bar{\eta}(C)$ and therefore $\bar{\eta}(C) \in \cC$ by hypothesis, so we may assume that $C$ is not a cycle.
Since every switching function can be written as a composition of switching functions that switch with a non-identity value at only one vertex, we may assume without loss of generality that $\eta(v_1) = \gamma$ with $\gamma \ne \ep$ and $\eta(v_i) = \ep$ for all $i \in [n] - \{1\}$.
If $v_1$ is not a vertex of $G[C]$ then $\bar{\eta}(C) = C$ and so $\bar{\eta}(C) \in \cC$.
So we may assume that $v_1$ is a vertex of $G[C]$.
We now consider two special cases.

\begin{claim} \label{claim: degree-2, not parallel pair}
If $v_1$ has degree $2$ in $G[C]$ and the edges incident with $v_1$ do not form a cycle, then $\bar{\eta}(C) \in \cC$.
\end{claim}
\begin{proof}
There is a $2$-edge subpath $P$ of $C$ with $v_1$ as the internal vertex.
Let $v_i$ and $v_j$ be the ends of $P$.
Switching at $v_1$ replaces $P$ with a $2$-edge path $P'$ from $v_i$ to $v_j$ with $v_1$ as the internal vertex so that the $v_i$-$v_j$ walks in $P$ and $P'$ have the same $\Gamma$-value.
By Claim \ref{claim: replace a path} it follows that $\bar{\eta}(C) \in \cC$.
\end{proof}

We will use a different argument when the two edges incident with $v_1$ form a cycle.

\begin{claim} \label{claim: degree-2, parallel pair}
If $v_1$ has degree $2$ in $G[C]$ and the edges incident with $v_1$ form a cycle, then $\bar\eta(C) \in \cC$.
\end{claim}
\begin{proof}
Note that $C$ is a tight handcuff or a loose handcuff.
Let $D$ and $D'$ be the two cycles of $G[C]$ so that $v_1$ is a vertex of $D$.
Let $v_i$ be a degree-$2$ vertex of $D'$; we may assume that $i = 2$.
Let $e$ be an edge between $v_1$ and $v_2$ so that $e$ is in a $\psi$-balanced cycle $C_1$ of $G[C \cup e]$.
Let $f$ be an edge of $C_1$ incident with $v_1$.
By circuit elimination in $N$, $(C \cup e) - f$ contains a circuit $C_2$ of $N$.
Since $|C \cup e| - r_N(C \cup e) = 2$ and each element of $C \cup e$ is in a circuit of $N$, the circuits $C_1$ and $C_2$ form a modular pair whose union contains $C \cup e$.
Note that $v_1$ has degree $2$ in $G[C_2]$ and its two incident edges do not form a cycle.
Therefore $\bar\eta(C_2) \in \cC$ by Claim \ref{claim: degree-2, not parallel pair}.
Then $\bar\eta(C_2)$ and $\bar\eta(C_1)$ is a modular pair of circuits of $N$ so that both are in $\cC$ and their union contains $\bar\eta(C \cup e)$.
Since $\cC$ is a linear class, this implies that $\bar\eta(C) \in \cC$, as desired.
\end{proof}

We now proceed by induction on the degree of $v_1$ in $G[C]$ to show that $\bar\eta(C) \in \cC$.
If $v_1$ has degree at most $2$ in $G[C]$ then $\bar\eta(C) \in \cC$ by Claims \ref{claim: degree-2, not parallel pair} and \ref{claim: degree-2, parallel pair}, so we may assume that $v_1$ has degree at least $3$ in $G[C]$.
If $G[C]$ is a theta graph, then by Claim \ref{claim: replace a path} and the fact that $n \ge 3$ we may assume that $G[C]$ has at least three vertices.
Then, regardless of whether $G[C]$ is a theta or a handcuff,
$G[C]$ has a pair of vertices $v_i$ and $v_j$ with $i,j \ne 1$ and an edge $e$ between $v_i$ and $v_j$ so that $e$ is in a $\psi$-balanced cycle $C_1$ of $G[C \cup e]$ with $v_1 \in V(C_1)$.
Let $f$ be an edge of $C_1$ incident with $v_1$.
By circuit elimination for $N$, $(C \cup e) - f$ contains a circuit $C_2$ of $N$.
Since $|C \cup e| - r_N(C \cup e) = 2$ and each element of $C \cup e$ is in a circuit of $N$, the circuits $C_1$ and $C_2$ form a modular pair whose union contains $C \cup e$.
Note that $v_1$ has smaller degree in $G[C_2]$ than in $G[C]$.
By induction, $\bar\eta(C_2) \in \cC$.
Then $\bar\eta(C_2)$ and $\bar\eta(C_1)$ is a modular pair of circuits of $N$ so that both are in $\cC$ and their union contains $\bar\eta(C \cup e)$.
Since $\cC$ is a linear class, this implies that $\bar\eta(C) \in \cC$, as desired.
\end{proof}

Note that Lemma \ref{lem: switching action} is not always true when $n = 2$, because if $|\cC| = 1$ then $\cC$ is not invariant under switching when $|\Gamma| \ge 3$.
We can now prove the following restatement of Theorem \ref{thm: main converse}.

\begin{theorem} \label{thm: the converse}
Let $\Gamma$ be a finite group with a normal subgroup $\Gamma_1$ and let $n \ge 4$ be an integer.
Let $M$ be an elementary lift of the frame matroid $F(K_n^{\Gamma}, \psi_n^{\Gamma}/\Gamma_1)$ corresponding to a linear class $\cC$ so that 
a cycle of $K_n^{\Gamma}$ is in $\cC$ if and only if it is $\psi_n^{\Gamma}$-balanced.
Then there is a partition $\{\Gamma_1\} \cup \cA$ of $\Gamma$ so that if $A \in \cA$ then $A$ is malnormal and every conjugate of $A$ is in $\cA$, and $M = M(\Gamma, \Gamma_1, \cA, K_n^{\Gamma}, \psi_n^{\Gamma})$.
\end{theorem}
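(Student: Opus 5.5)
The plan is to read off $\cA$ from the linear class $\cC$ using the smallest handcuffs, and then show that $\cC$ agrees with $\cC(\Gamma,\Gamma_1,\cA,K_n^\Gamma,\psi_n^\Gamma)$ of Definition \ref{def: the linear class}. Once these linear classes agree, $M = M(\Gamma,\Gamma_1,\cA,K_n^{\Gamma},\psi_n^{\Gamma})$ by the uniqueness part of Theorem \ref{thm: brylawski}.

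\textbf{Step 1: define the relation.} For $\alpha,\beta \in \Gamma - \Gamma_1$, consider the tight handcuff at $v_1$ formed by a $2$-cycle through $\ep_{12},\alpha_{12}$ and a $2$-cycle through $\ep_{13},\beta_{13}$. Both cycles are $(\psi/\Gamma_1)$-unbalanced, so this is a circuit $H(\alpha,\beta)$ of $N=F(K_n^\Gamma,\psi_n^\Gamma/\Gamma_1)$. Write $\alpha\sim\beta$ if $H(\alpha,\beta)\in\cC$.

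By Lemma \ref{lem: switching action} and Claim \ref{claim: replace a path}, membership in $\cC$ of any handcuff or theta circuit normalized on a spanning tree depends only on the two off-tree gains, up to simultaneous conjugation. Concretely: switching at $v_1$ by $\gamma$ shows $\alpha\sim\beta$ iff $\gamma^{-1}\alpha\gamma\sim\gamma^{-1}\beta\gamma$, and path replacement moves cycles around and turns tight handcuffs into loose ones or thetas.

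\textbf{Step 2: transitivity via linear-class arguments in small subgraphs.} Using that $n\ge4$ leaves room, I take the union of $H(\alpha,\beta)$ and $H(\beta,\gamma)$ sharing the $\beta$-cycle. They form a modular pair, since the union has cyclomatic number $3$ and is connected, so the nullity of the union is $2$. The linear class property then gives $H(\alpha,\gamma)\in\cC$.

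\textbf{Step 3: the classes are subgroups.} Reflexivity needs an argument. If $\alpha\not\sim\alpha$, I use a theta made of three parallel edges $\ep,\alpha,\alpha^2$ or similar together with a balanced-cycle hypothesis to force a contradiction. Then I show that $\alpha\sim\beta$ implies $\alpha\sim\alpha\beta$ when $\alpha\beta\notin\Gamma_1$, and that $\alpha\beta\notin\Gamma_1\cup\{\ep\}$ is forced. For this I use the theta graph on $v_1,v_2$ with edges $\ep_{12}$, $\alpha_{12}$, $(\alpha\beta)_{12}$-type paths. The key input is the hypothesis that cycles in $\cC$ are exactly the balanced ones: a theta in $\cC$ whose third cycle is $(\psi/\Gamma_1)$-balanced, as part of a modular pair with a $(\psi/\Gamma_1)$-balanced unbalanced cycle, would force that cycle into $\cC$, which is a contradiction. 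Adding $\{\ep\}$ to each class then gives subgroups $A$. Together with $\Gamma_1$ they partition $\Gamma$, and conjugation invariance follows from Step 1.

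\textbf{Step 4: malnormality (expected main obstacle).} This should mirror the loose-handcuff paragraph in the proof of Theorem \ref{thm: main precise}, run in reverse. Take two vertex-disjoint $(\psi/\Gamma_1)$-unbalanced loops-as-$2$-cycles, with gains $\alpha$ on $\{v_1,v_2\}$ and $\beta$ on $\{v_3,v_4\}$ (this is where $n\ge4$ is used). Join them by paths with connecting gain $1$ and $\gamma$ respectively. By switching, these loose handcuffs compare $\alpha$ with $\beta$ and $\gamma^{-1}\alpha\gamma$ with $\beta$. If $\alpha,\gamma^{-1}\alpha\gamma\in A$, then both handcuffs lie in $\cC$ after appropriate choices, and they form a modular pair. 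Their union contains a cycle that is $(\psi/\Gamma_1)$-balanced iff $\gamma\in\Gamma_1$, and it is unbalanced. So $\gamma\in\Gamma_1$ would force an unbalanced cycle into $\cC$, which is impossible. Otherwise $\gamma$ lies in some class $A'$, and one more linear-class step shows $A'=A$. The delicate part is arranging the modular pair so that the forced circuit is a cycle. I would first try the case $\gamma\in\Gamma_1$ explicitly and then handle $\gamma$ in another complement.

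\textbf{Step 5: the linear classes agree.} For any handcuff or theta circuit, normalize a spanning tree (Lemma \ref{lem: normalize a forest}) and use path replacement and switching (Claim \ref{claim: replace a path}, Lemma \ref{lem: switching action}) to reduce to $H(\alpha,\beta)$. This gives $C\in\cC$ iff both off-tree gains lie in a common $A$, which is condition (ii) of Lemma \ref{lem: equivalences for the linear class}. The cycles agree by hypothesis.
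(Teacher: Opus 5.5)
Your overall plan matches the paper's: an equivalence relation on $\Gamma-\Gamma_1$ whose classes, together with $\ep$, form $\cA$; the loose handcuff $\{\ep_{12},\alpha_{12},\ep_{23},\ep_{34},\beta_{34}\}$ plus the edge $\gamma_{23}$ for malnormality; and switching invariance for the final comparison. However, defining $\alpha\sim\beta$ by membership of the single tight handcuff $H(\alpha,\beta)$ leaves a gap that your cited tools cannot close. In Step 5 you must transfer membership in $\cC$ between tight handcuffs, loose handcuffs and theta graphs. The same is needed already in Step 4, where a \emph{loose} handcuff whose two off-tree gains lie in one class must belong to $\cC$. You claim path replacement does this, but it does not. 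Claim \ref{claim: replace a path} swaps a path whose internal vertices have degree $2$ for another path with the same ends that avoids the rest of $C$, so $C'$ is homeomorphic to $C$; the proof of that claim begins by noting that $C$ and $C'$ have the same type. Switching does not change the underlying graph at all. So no sequence of these moves takes a loose handcuff or a theta to $H(\alpha,\beta)$, and Steps 4 and 5 are unsupported as written.

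The missing ingredient is a type-changing step. You can build one from modular pairs with balanced triangles. Take $L=\{\ep_{12},\alpha_{12},\ep_{23},\ep_{34},\beta_{34}\}$ and the balanced triangle $T=\{\ep_{23},\ep_{34},\ep_{24}\}$. The set $L\cup T$ has nullity $2$ in $N$ and equals $H'\cup T$ for the tight handcuff $H'=\{\ep_{12},\alpha_{12},\ep_{23},\ep_{24},\beta_{34}\}$. Hence $L\in\cC$ if and only if $H'\in\cC$, and analogous moves handle thetas.

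The paper avoids this case analysis altogether. Writing $E_X$ for the set of edges $\alpha_{ij}$ with $\alpha\in X$, it defines $\alpha\sim\beta$ to mean $r_M(E_{\{\ep,\alpha,\beta\}})=n$. Balanced triangles show that $E_\ep\cup\alpha_{ij}$ spans $E_\alpha$ in $M$. With this fact, the definition says that every circuit of $N$ inside $E_{\{\ep,\alpha,\beta\}}$ lies in $\cC$, whatever its type. It also says that a circuit normalized on a spanning tree, with one off-tree edge in $E_A$, is in $\cC$ exactly when $E_A$ spans the other off-tree edge in $M$.

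There are also some smaller issues in Step 3:
\begin{itemize}
\item Reflexivity follows at once from the modular pair of balanced triangles $\{\ep_{12},\ep_{23},\ep_{13}\}$ and $\{\alpha_{12},\ep_{23},\alpha_{13}\}$. Your theta on $\ep,\alpha,\alpha^2$ need not even be a circuit of $N$ (e.g.\ if $\alpha^2\in\Gamma_1$).
\item A theta in $\cC$ is a circuit of $N$, so it cannot contain a $(\psi/\Gamma_1)$-balanced cycle.
\item What is forced is $\alpha\beta\notin\Gamma_1-\{\ep\}$, not $\alpha\beta\notin\Gamma_1$; take $\beta=\alpha^{-1}$.
\end{itemize}
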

\begin{proof}
We will write $\circ$ and $\ep$ for the operation and identity, respectively, of $\Gamma$.
We will write $N$ for the underlying frame matroid $F(K_n^{\Gamma}, \psi_n^{\Gamma}/\Gamma_1)$ and $\psi$ for $\psi_n^{\Gamma}$.
If $\Gamma_1 = \Gamma$ then $N$ is the graphic matroid of $K_n^{\Gamma}$ and $M$ is the lift matroid of $(K_n^{\Gamma}, \psi_n^{\Gamma}/\Gamma_1)$.
If $\cA = \varnothing$, then $M(\Gamma, \Gamma_1, \cA, K_n^{\Gamma}, \psi_n^{\Gamma})$ is also the lift matroid of $(K_n^{\Gamma}, \psi_n^{\Gamma})$ by Proposition \ref{prop: special cases frame and lift matroids}, so the theorem holds with $\cA = \varnothing$.
So we may assume that $\Gamma_1 \ne \Gamma$.
This means that $\Gamma/\Gamma_1$ is nontrivial, so $r(N) = n$.
Suppose that $r(M) = n$.
Then $M = N$, and every circuit of $N$ is in $\cC$.
If $\Gamma_1$ is nontrivial, then let $\gamma \in \Gamma_1$.
Then $\{\ep_{12}, \gamma_{12}\}$ is a circuit of $N$ (a $(\psi/\Gamma_1)$-balanced cycle) that is not in $\cC$ (because it is not $\psi$-balanced), a contradiction.
Therefore $\Gamma_1$ is trivial and the theorem holds with $\cA = \{\Gamma\}$.
So we may assume that $r(M) = n + 1$.

For $\alpha \in \Gamma$ we will write $E_{\alpha}$ for $\{\alpha_{ij} \colon 1 \le i < j \le n\}$, and for a set $X \subseteq \Gamma$ we will write $E_X$ for $\cup_{\alpha \in X} E_{\alpha}$.
Note that $r_M(E_{\ep}) = r_N(E_{\ep}) = n - 1$ because all cycles in $E_{\ep}$ are $\psi$-balanced.
Using $\psi$-balanced cycles we can show that for all $\alpha \in \Gamma - \Gamma_1$, every circuit of $N$ contained in $E_{\{\ep, \alpha\}}$ is in $\cC$.

\begin{claim} \label{claim: sim is reflexive}
For all $\alpha \in \Gamma - \Gamma_1$ and $1 \le i < j \le n$ we have $r_M(E_{\ep} \cup \alpha_{ij}) = r_M(E_{\{\ep, \alpha\}}) = n$.
\end{claim}
\begin{proof}
We may assume that $(i,j) = (1,2)$.
Let $B = \{\ep_{1j} \colon 2 \le j \le n\} \cup \alpha_{12}$.
This is a basis of $N$ and is therefore independent in $M$, so $r_M(E_{\ep} \cup \alpha_{12}) = n$ since $r_M(E_{\ep}) = n - 1$.
For each $j$ with $3 \le j \le n$, the $\psi$-balanced cycle $\{\alpha_{12}, \ep_{2j}, \alpha_{1j}\}$ shows that $\alpha_{1j} \in \cl_M(E_{\ep} \cup \alpha_{12})$.
Then for all $2 \le i < j \le n$, the $\psi$-balanced cycle $\{\ep_{1i}, \alpha_{ij}, \alpha_{1j}\}$ shows that $\alpha_{ij} \in \cl_M(E_{\ep} \cup \alpha_{12})$.
Therefore $E_{\alpha} \subseteq \cl_M(E_{\ep} \cup \alpha_{12})$, so the claim holds.
\end{proof}

We will now define an equivalence relation $\sim$ on $\Gamma - \Gamma_1$; this equivalence relation will lead to $\cA$.
For $\alpha, \beta \in \Gamma - \Gamma_1$ we write $\alpha \sim \beta$ if 
$r_M(E_{\{\ep,\alpha, \beta\}}) = n$.
Equivalently, $\alpha \sim \beta$ if every circuit of $N$ contained in $E_{\{\ep,\alpha, \beta\}}$ is in $\cC$.
Clearly $\sim$ is symmetric, and $\sim$ is reflexive by Claim \ref{claim: sim is reflexive}.
To show that $\sim$ is transitive, suppose $\alpha \sim \beta$ and $\beta \sim \Gamma$ for $\alpha, \beta, \gamma \in \Gamma - \Gamma_1$.
Then $E_{\{\ep, \beta\}}$ spans $E_{\alpha}$ in $M$ because $\alpha \sim \beta$, and $E_{\{\ep, \beta\}}$ spans $E_{\gamma}$ in $M$ because $\beta \sim \gamma$.
Therefore $r_M(E_{\{\ep, \alpha, \beta, \gamma\}}) = r_M(E_{\{\ep, \beta\}}) = n$, so $\alpha \sim \gamma$.
So $\sim$ is an equivalence relation on $\Gamma - \Gamma_1$.
Let $\cA_1$ be the set of equivalence classes of $\sim$, and let $\cA = \{A \cup \ep \colon A \in \cA_1\}$.

\begin{claim} \label{claim: subgroup}
Each $A \in \cA$ is a subgroup of $\Gamma$.
\end{claim}
\begin{proof}
Let $A \in \cA$ and let $\alpha, \beta \in A - \{\ep\}$.
We will first show that $\alpha^{-1} \in A$.
The $\psi$-balanced cycle $\{\alpha^{-1}_{12}, \alpha_{23}, \ep_{13}\}$ shows that $\alpha^{-1}_{12} \in \cl_M(E_{\{\ep,\alpha\}})$.
It then follows from Claim \ref{claim: sim is reflexive} that $E_{\{\ep,\alpha\}}$ spans $E_{\alpha^{-1}}$ in $M$.
So $r_M(\{\ep, \alpha, \alpha^{-1}\}) = n$, so $\alpha \sim \alpha^{-1}$ and therefore $\alpha^{-1} \in A$.

We will next show that $\alpha \circ \beta \in A$.
The $\psi$-balanced cycle $\{(\alpha \circ \beta)_{12}, \beta^{-1}_{23}, \alpha^{-1}_{13}\}$ shows that $(\alpha \circ \beta)_{12} \in \cl_M(E_{A})$, and it follows from Claim \ref{claim: sim is reflexive} that $E_{\alpha \circ \beta} \subseteq \cl_M(E_{A})$.
So $r_M(E_{A \cup \{\alpha \circ \beta\}}) = n$ and therefore $\alpha \circ \beta \in A$ by the definition of $\sim$.
\end{proof}

We have now shown that $\cA$ is a collection of subgroups of $\Gamma$ so that each pair intersects in only the identity and each shares only the identity with $\Gamma_1$.
Note that since each $A \in \cA$ satisfies $r_M(E_A) = r_N(E_A) = n$, all circuits of $N$ contained in $E_A$ are in $\cC$.
We will next show that each $A \in \cA$ is malnormal.

\begin{claim} \label{claim: malnormal}
Each $A \in \cA$ is a malnormal subgroup of $\Gamma$.
\end{claim}
\begin{proof}
Suppose towards a contradiction that there is some $A \in \cA$, some $\alpha \in A - \{\ep\}$, and some $\gamma \in \Gamma - A$ so that $\gamma^{-1} \circ \alpha \circ \gamma \in A$.
Let $\beta = \gamma^{-1} \circ \alpha \circ \gamma$, so $\alpha \sim \beta$.
Let $C = \{\ep_{12}, \alpha_{12}, \ep_{23}, \ep_{34}, \beta_{34}\}$, so $C$ is a circuit of $N$ and $C \in \cC$.
Consider the set $C \cup \gamma_{23}$.
Since $\gamma_{23} \in \cl_N(C)$ we see that $|C \cup \gamma_{23}| - r_N(C \cup \gamma_{23}) = 2$.
Let $C' = (C - \ep_{23}) \cup \gamma_{23}$ and note that $C'$ is a circuit of $N$.
We claim that $C' \in \cC$.
Let $\eta \colon [n] \to \Gamma$ be the switching function so that $\eta(v_1) = \eta(v_2) = \gamma$ and $\eta(v_i) = \ep$ if $i \notin \{1,2\}$.
By Lemma \ref{lem: switching action} we see that $\bar\eta(C')$ is a circuit of $N$ and that $\bar\eta(C') \in \cC$ if and only if $C' \in \cC$.
Note that $\psi(\bar\eta(C'))$ is contained in $A$.
Since every cycle contained in $E_A$ is in $\cC$, this implies that $\bar\eta(C') \in \cC$, and therefore $C' \in \cC$.
Since $|C \cup \gamma_{23}| - r_N(C \cup \gamma_{23}) = 2$ and $C \cup \gamma_{23}$ contains the circuits $C, C' \in \cC$, every circuit of $N$ contained in $C \cup \gamma_{23}$ is in $\cC$ because $\cC$ is a linear class.
If $\gamma \in \Gamma_1$ then $\{\ep_{23}, \gamma_{23}\}$ is a circuit of $N$ that is in $\cC$ even though it is not $\psi$-balanced, a contradiction.
So $\gamma \notin \Gamma_1$.
Then $\{\ep_{12}, \alpha_{12}, \ep_{23}, \gamma_{23}\}$ is a tight handcuff circuit of $N$ that is in $\cC$, and is therefore a circuit of $M$.
Then $\gamma_{23} \in \cl_M(E_A)$, and so Claim \ref{claim: sim is reflexive} implies that $r_M(E_{A \cup \gamma}) = n$.
By the definition of $\sim$ we see that $\gamma \in A$, a contradiction.
Therefore $A$ is malnormal.
\end{proof}

Let $\cC' = \cC(\Gamma, \Gamma_1, \cA, K_n^{\Gamma}, \psi_n^{\Gamma})$ and let $M' = M(\Gamma, \Gamma_1, \cA, K_n^{\Gamma}, \psi_n^{\Gamma})$ from Definitions \ref{def: the linear class} and \ref{def: the matroid}.
We will now show that $\cC = \cC'$, implying that $M = M'$.
Since a cycle of $K_n^{\Gamma}$ is in $\cC$ if and only if it is $\psi$-balanced, $\cC$ and $\cC'$ contain the same cycles.
Let $C$ be a handcuff or theta graph circuit of $N$.
Let $\eta \colon [n] \to \Gamma$ be a switching function so that $\bar{\eta}(C)$ has a $\psi$-normalized spanning tree $T$.
Let $e_1, e_2 \in C - T$.
Note that $\bar{\eta}(e_1)$ and $\bar {\eta}(e_2)$ are not in $\Gamma_1$ since $C$ is a circuit of $N$.
Consider the following statements:
\begin{enumerate}[label=$(\roman*)$]
    \item $C \in \cC$.
    \item $\bar{\eta}(C) \in \cC$.
    \item There is some $A \in \cA$ so that $E_A$ contains $\bar{\eta}(e_1)$ and $\bar {\eta}(e_2)$.
    \item $\bar{\eta}(C) \in \cC'$.
    \item $C \in \cC'$.
\end{enumerate}
We will show that statements $(i)$ and $(v)$ are equivalent by showing that each consecutive pair of statements are equivalent.
Statements $(i)$ and $(ii)$ are equivalent by Lemma \ref{lem: switching action} applied to $M$, and $(iv)$ and $(v)$ are equivalent by Lemma \ref{lem: switching action} applied to $M'$.
Statements $(iii)$ and $(iv)$ are equivalent by Lemma \ref{lem: equivalences for the linear class}.
So it suffices to show that $(ii)$ and $(iii)$ are equivalent.
If $(iii)$ holds, then $\bar{\eta}(C) \subseteq E_A$, and since all circuits of $N$ contained in $E_A$ are in $\cC$ as discussed previously, $(ii)$ holds.
Conversely, suppose that $\bar{\eta}(C) \in \cC$.
Let $A \in \cA$ so that $\bar{\eta}(e_1) \in E_A$, and let $\alpha \in \Gamma$ so that $\bar{\eta}(e_2) \in E_{\alpha}$.
Since $\bar{\eta}(C)$ is a circuit of $M$ we see that $E_{A}$ spans $\bar{\eta}(e_2)$, and it then follows from Claim \ref{claim: sim is reflexive} that $r_M(E_{A \cup \alpha}) = n$. 
By the definition of $\sim$, this implies that $\alpha \in A$, so $\bar{\eta}(e_2) \in E_A$ and $(iii)$ holds.
We have now shown that $(i)$ and $(v)$ are equivalent.
Therefore $\cC = \cC'$, so $M = M'$, as desired.
\end{proof}

We comment that the statement of Theorem \ref{thm: the converse} is false when $n = 3$ because of the remark at the end of Section \ref{sec: the construction} and the fact that $K_3^{\Gamma}$ has no vertex-disjoint unbalanced cycles.

\section{Future work} \label{sec: future work}
While we have already presented several conjectures (Conjectures \ref{conj: excluded minors infinite case}, \ref{conj: excluded minors finite case}, \ref{conj: equivalence for canonical matrices}, and \ref{conj: representable implies incidence matrix of gain function}) and one problem (Problem \ref{prob: infinite group partition properties}), we close by discussing several more interesting directions for future work.

\subsection{Universal models}
Following Kahn and Kung \cite{Kahn-Kung1982}, a \emph{sequence of universal models} for a class $\cM$ of matroids is a sequence $M_2, M_3, \dots$ of simple matroids in $\cM$ so that for each $r \ge 2$, the matroid $M_r$ has rank $r$ and every simple rank-$r$ matroid in $\cM$ is a restriction of $M_r$.
For example, projective geometries form a sequence of universal models for matroids representable over a given finite field.
It is well-known that for every finite group $\Gamma$, the classes of $\Gamma$-frame matroids and $\Gamma$-lifted-graphic matroids each have a sequence of universal models (see \cite[page 47]{Zaslavsky1991} and \cite[Chapter 4]{Geelen-Nelson-Walsh-2024}).
This should be true more generally for $\cM(\Gamma, \Gamma_1, \cA)$ as well when $\Gamma$ is a Frobenius group.

\begin{conjecture}
If $\Gamma$ is a Frobenius group with Frobenius kernel $\Gamma_1$ and set $\cA$ of Frobenius complements, then $\cM(\Gamma, \Gamma_1, \cA)$ has a sequence of universal models.
\end{conjecture}

We plan to prove this conjecture in a sequel paper, and to show that the universal models are unavoidable minors of dense elementary lifts of frame matroids known as $(\alpha, 1)$-frame matroids, which are conjectured by Geelen, Gerards, and Whittle \cite[Conjecure 7.5]{Structure_Theory} to be of fundamental importance in extremal matroid theory.

\subsection{Rank-$t$ lifts}

A matroid $M$ is a \emph{rank-$t$ lift} of a matroid $N$ if there is a matroid $K$ with a set $X$ so that $K \del X = M$, $K/X = N$, and $r(M) - r(N) = t$.
Note that an elementary lift is a rank-$t$ lift with $t \in \{0,1\}$.
For a group $\Gamma$, Theorem \ref{thm: main} takes in a $\Gamma$-graph $(G, \psi)$ and outputs an elementary lift $M$ of the frame matroid $F(G, \psi)$ so that a cycle of $G$ is a circuit of $M$ if and only if it is $\psi$-balanced.
What if we wish to construct a rank-$t$ lift with $t \ge 2$?

\begin{problem} \label{prob: rank-t lifts}
Let $t$ be an integer with $t \ge 2$.
For which finite groups $\Gamma$ with nontrivial normal subgroup $\Gamma_1$ is there a construction that takes in a $\Gamma$-gain graph $(G, \psi)$ and outputs a rank-$t$ lift of the frame matroid $F(G, \psi/\Gamma_1)$ so that a cycle of $G$ is a circuit of $M$ if and only if it is $\psi$-balanced?
\end{problem}

Our motivating example comes from the class of matroids with a representation over a field $\bF$ of the following form:
$$
\begin{tabular}{|ccc|}
\hline 
& $t$ rows & \\
\hline 
& & \\
& $\bF^{\times}$-frame matrix & \\ [0.5cm]
\hline
\end{tabular}.
$$
When $\bF$ is finite, this class of matroids is important in matroid structure theory for the same reasons as outlined in Section \ref{sec: the representable case} in the case that $t = 1$.
Each column of the matrix is naturally associated with a pair $(a,b)$ where $a \in \bF^t$ and $b \in \bF^{\times}$, so it should be possible to construct the vector matroid from a $(\bF^t \rtimes \bF^{\times})$-gain graph where $\bF^{\times}$ acts on $\bF^t$ by scaling.
So we expect that $(\bF^t \rtimes \bF^{\times})$ is a valid choice for $\Gamma$ in Problem \ref{prob: rank-t lifts}, but it is unclear whether or not such a construction is also possible for other groups.
Walsh \cite[Theorem 2]{Walsh2022} and Bernstein and Walsh \cite[Theorem 2]{Bernstein-Walsh-2024} 
gave a construction for rank-$t$ lifts of graphic matroids with $t \ge 2$, so one approach for Problem \ref{prob: rank-t lifts} would be to somehow combine the work of Bernstein and Walsh with Theorem \ref{thm: main} of the present paper.

\subsection{Unified matroids in applied discrete geometry}

As a final direction for future work, we discuss the possibility of combining Theorem \ref{thm: main} with work of Tanigawa \cite{Tanigawa-2015}.
For any group $\Gamma$, Tanigawa generalizes the classes of $\Gamma$-frame matroids \cite[Section 4]{Tanigawa-2015} and $\Gamma$-lifted-graphic matroids \cite[Section 8]{Tanigawa-2015} by perturbing the rank function via a  submodular function $\mu \colon 2^{\Gamma} \to \bR_+$ called a \emph{symmetric polymatroidal function} over $\Gamma$.
In Section 9 he discusses the possibility of unifying his two constructions, and this may now be possible via Theorem \ref{thm: main}.
In particular, if $M \in \cM(\Gamma, \Gamma_1, \cA)$, it may be possible to perturb the rank function of $M$ (given by Proposition \ref{prop: rank function}) via two symmetric polymatroidal functions simultaneously, one over $\Gamma_1$ and one over $\Gamma/\Gamma_1$.
Since Tanigawa presents many different applications of his constructions in rigidity theory \cite[Section 1.1]{Tanigawa-2015}, this potential generalization may have further applications in rigidity theory.

\section*{Acknowledgments}
The author thanks Huajun Huang for helpful discussions relating to Problem \ref{prob: infinite group partition properties}, Jim Geelen and Peter Nelson for discussions on gain graphs over semidirect products, and John David Clifton for comments on an earlier draft of the paper.

\bibliographystyle{abbrv}
\bibliography{Matroid_structure}

\end{document}